\newcommand*\bigcdot{\mathpalette\bigcdot@{.7}}
\newcommand*\bigcdot@[2]{\mathbin{\vcenter{\hbox{\scalebox{#2}{$\m@th#1\bullet$}}}}}
\newtheorem{thm}[subsection]{Theorem}
\newtheorem{defn}[subsection]{Definition}
\newtheorem{prop}[subsection]{Proposition}
\newtheorem{cor}[subsection]{Corollary}
\newtheorem{lemma}[subsection]{Lemma}
\theoremstyle{definition}  
\newtheorem{example}[subsection]{Example}
\newtheorem{remark}[subsection]{Remark}
\newcommand{\dfn}{\textbf} 
\newcommand{\mdfn}[1]{\dfn{\mathversion{bold}#1}} 
\newcommand{\Smash}             {\wedge}
\newcommand{\Wedge}             {\vee}
\DeclareRobustCommand{\bigWedge}{\bigvee}
\newcommand{\tens}              {\otimes}               
\newcommand{\iso}               {\cong}
\newcommand{\cat}{\EuScript}    
\newcommand{\cD}{{\cat D}}
\newcommand{\cE}{{\cat E}}
\newcommand{\Mod}{\text{Mod}}
\newcommand{\field}[1]  {\mathbb #1} 
\newcommand{\HH}         {\field H}
\newcommand{\R}         {\field R}
\newcommand{\M}         {\field M}
\newcommand{\Z}         {\field Z}
\newcommand{\C}         {\field C}
\newcommand{\Q}         {\field Q}
\DeclareMathOperator*{\GL}{GL}
\DeclareMathOperator*{\im}{Im}
\DeclareMathOperator{\Hom}{Hom}
\DeclareMathOperator{\End}{End}
\newcommand{\ra}{\rightarrow}                   
\newcommand{\lra}{\longrightarrow}              
\newcommand{\lla}{\longleftarrow}               
\newcommand{\llra}[1]{\stackrel{#1}{\lra}}      
\newcommand{\inc}{\hookrightarrow}              
\newcommand{\blank}{-}                          
\newcommand{\id}{id}                            
\newcommand{\und}{\underline}
\newcommand{\period}    {{\makebox[0pt][l]{\hspace{2pt} .}}}
\newcommand{\he}{\simeq}
\newcommand{\pt}{pt}
\numberwithin{equation}{section}
\DeclareMathOperator{\Func}{Func}
\newcommand{\MMack}[2]{\xymatrix{
{#1} \ar@(ul,dl)[] \ar@<0.5ex>[r] & {#2}\ar@<0.5ex>[l]}}
\newcommand{\undZ}{\underline{\Z}}
\newcommand{\MMod}{\!-\!\Mod}
\DeclareMathOperator{\coker}{coker}
\newcommand{\mZ}{\underline{\smash{\Z}}}
\DeclareMathOperator{\OC}{OC}
\newcommand{\ray}[1]{\overrightarrow{#1}}
\newcommand{\tH}{\tilde{H}}
\newcommand{\FF}{\mathcal{F}}
\DeclareMathOperator{\stab}{Stab}
\DeclareMathOperator{\gr}{gr}
\DeclareMathOperator{\Rees}{Rees}
\newcommand{\squar}{\circleddash}
\DeclareMathOperator{\Th}{Th}
\newcommand{\ls}{{\lambda\!-\!\sigma}}
\newcommand{\BB}{\mathbb{E}}
\numberwithin{equation}{subsection}
\begin{document}

\title[Bredon cohomology of Euclidean 
configuration spaces]{$RO(G)$-graded Bredon cohomology of Euclidean 
configuration spaces}

\author{Daniel Dugger}
\author{Christy Hazel}

\address{Department of Mathematics\\University of Oregon\\Eugene, OR
  97403}
\email{ddugger@uoregon.edu}

\address{Department of Mathematics\\Grinnell College\\ Grinnell, IA 50112}
\email{hazelchristy@grinnell.edu}

\maketitle

\begin{abstract}
    Let $G$ be a finite group and $V$ be a $G$-representation. We investigate the $RO(G)$-graded Bredon cohomology with constant integral coefficients of the space of ordered configurations in $V$. In the case that $V$ contains a trivial subrepresentation, we show the cohomology is free as a module over the cohomology of a point, and we give a generators-and-relations description of the ring structure. In the case that $V$ does not contain a trivial representation, we give a computation of the module structure that works as long as a certain vanishing condition holds in the Bredon cohomology of a point. We verify this vanishing condition holds in the case that $\dim(V)\geq 3$ and $G$ is any of $C_p$, $C_{p^2}$ ($p$ a prime), or the symmetric group on three letters. 
\end{abstract}

\tableofcontents

\section{Introduction}
\label{se:intro}

Let $G$ be a finite group and let $\und{\Z}$ denote the constant coefficient Mackey functor whose value is $\Z$.  $RO(G)$-graded Bredon cohomology associates to any $G$-space $X$ the graded group  $H^\star(X;\und{\Z})=\bigoplus_{W\in RO(G)} H^{W}(X;\und{\Z})$, which becomes a multigraded ring after fixing an ordered basis of irreducible representations (cf. Section~\ref{se:RO(G)} for discussion of this point). If $V$ is a $G$-representation let $\OC_k(V)$ denote the \dfn{ordered configuration space} of tuples $(x_1,\ldots,x_k)\in V^k$ such that $x_i\neq x_j$ for all $i\neq j$.  The $G$-action on $\OC_k(V)$ is inherited from the action on $V$. Our goal in this paper is to compute the groups $H^\star(\OC_k(V);\und{\Z})$, together with the ring structure when possible.  While we are not able to completely solve this problem in all cases, we have the following results:

\begin{enumerate}[(1)]
\item We compute the additive structure, as a module over $H^\star(\pt;\und{\Z})$, whenever $V\supseteq 1$ (i.e., $V$ contains a copy of the $1$-dimensional trivial representation).  We also give a
generators-and-relations description of the cohomology ring. These methods work for any finite group $G$.

\item For the case where $V\not\supseteq 1$, we give a calculation of the additive structure that depends on certain hypotheses specific to the pair $(G,V)$.  We do not know how widely these hypotheses are satisfied, though we verify them in the cases where $\dim V\geq 3$ and $G$ is any of $C_p$, $C_{p^2}$ ($p$ a prime), and the symmetric group $\Sigma_3$. When $G=C_2$ we are also able to do the calculations for $\dim V= 2$.  For the ring structure, we give an approach that works for $G=C_2$ but may be difficult to carry out in general.
\end{enumerate}

\medskip

To explain the results in more detail, let us first recall the non-equivariant situation.  The spaces $\OC_k(\R^n)$ have been much-studied, classically by Arnold \cite{A} when $n=2$ and more generally by Cohen  \cite[Chapter III, Sections 6--7]{CLM} building off of work from \cite{FN}.  (Some of these classical papers use the notation $F(\R^n,k)$ for the ordered configuration space.)  There are also many modern references, just a few of which are \cite{LS}, \cite{G}, and \cite{S}.
For $n\geq 2$ the integral cohomology ring of $\OC_k(\R^n)$ is the quotient of the polynomial ring
$\Z[\omega_{ij}\,\bigl | \, 1\leq i\neq j\leq k]$ by the three sets of relations
\begin{align*}
&\omega_{ij}=(-1)^n\omega_{ji},\\
&\omega_{ij}^2=0,\\
&\omega_{ij}\omega_{jk}+\omega_{jk}\omega_{ki}+\omega_{ki}\omega_{ij}=0.
\end{align*}
The third of these  will be called the \dfn{Arnold relation}, as it seems to have first appeared in \cite{A}.  Often the first relation is omitted and the generators are just taken to be $\omega_{ij}$ for $i<j$, but having both $\omega_{ij}$ and $\omega_{ji}$ is convenient for the third relation and will also be convenient when we generalize to the equivariant situation. [Note: Readers curious about the $n=1$ case can look at Section~\ref{se:V-G}.]

Since the $\omega_{ij}$ classes are easy to describe, we do this here.  
One has maps $\tilde{\omega}_{ij}\colon \OC_k(\R^n)\ra S^{n-1}$ sending a configuration $\und{x}=(x_1,\dots, x_n)$ to $\frac{x_i-x_j}{|x_i-x_j|}$, and $\omega_{ij}$ is just the pullback of the fundamental class from the cohomology of the sphere: $\omega_{ij}=\tilde{\omega}_{ij}^*(\iota_{n-1})$.  The first two of the three relations then follow immediately.  There are different ways to derive the Arnold relation (the original in \cite{A} using differential forms), but we will give a very geometric/topological approach in Section~\ref{se:gen-rels} below.

The above presentation implies that $H^*(\OC_k(\R^n))$ is nonzero only in degrees that are multiples of $n-1$, and in those degrees it is free abelian.  It is not exactly transparent from the relations, but the rank of $H^{i(n-1)}(\OC_k(\R^n))$ is the Stirling number of the first kind $c(k,k-i)$.  Here $c(a,b)$ is the number of permutations of $a$ that can be expressed as a product of $b$ disjoint cycles (see Section~\ref{se:Stirling} for other descriptions of the Stirling numbers).  The total rank of $H^*(\OC_k(\R^n))$ is therefore $k!$.  In Section~\ref{se:denouement} we will see that both of these facts have interesting explanations in terms of equivariant cohomology, an observation that is due to Proudfoot \cite{P}.

\vspace{0.2in}

Now let us move to Bredon cohomology for $G$-equivariant spaces.  
We write $\M$ for $H^\star(\pt;\und{\Z})$, which is the ground ring for our $RO(G)$-graded theory.  Every orthogonal representation $W$ has a so-called Euler class $a_W\in \M^W$ (see Section~\ref{se:euler}).  These classes are in the center of $\M$ and satisfy $a_{W_1\oplus W_2}=a_{W_1}\cdot a_{W_2}$ and  $a_1=0$.  Note that this implies $a_W=0$ whenever $W\supseteq 1$.   

Before stating our results let us recall that all representations are assumed to be orthogonal, and if $W_1\subseteq W_2$ then $W_2-W_1$ denotes the orthogonal complement.  If $W\supseteq 1$ then when we write $W-1$ we assume that a specific $1$-dimensional trivial subrepresentation has been chosen in $W$.

Our first main result is as follows:

\newtheorem*{th:main1}{Theorem \ref{th:main1}}
\begin{th:main1}
If $V\supseteq 1$ and $\dim V\geq 2$ then $H^\star(\OC_k(V);\und{\Z})$ is the quotient of the free $\M$-algebra generated by classes $\omega_{ij}$ of degree $V-1$, $1\leq i\neq j\leq k$, subject to the following relations:
\begin{align*}
&\omega_{ij}=(-1)^{|V|}\omega_{ji}+a_{V-1}, \\
&\omega_{ij}^2=a_{V-1}\omega_{ij}, \\
&\omega_{ij}\omega_{jk}+\omega_{jk}\omega_{ki}+\omega_{ki}\omega_{ij}=a_{V-1}(\omega_{ij}+\omega_{jk}+\omega_{ki})-a_{V-1}^2. 
\end{align*}
\end{th:main1}

\noindent
Observe that if $V\supseteq 2$ then the Euler classes $a_{V-1}$ all vanish and the above relations reduce to the non-equivariant form.  \medskip

Despite the similarity of the above result with the non-equivariant version, our proof is notably different in style.  In the non-equivariant setting the map $\OC_{k+1}(\R^n)\ra \OC_k(\R^n)$ that forgets the last point in the configuration is a fibration whose fiber is (up to homotopy) a wedge of spheres, and so one can use the Serre spectral sequence to inductively do the computations.  In the equivariant setting we are hampered by the fact that the Serre spectral sequence is a much less usable tool, with even basic computations seeming to require an extensive knowledge of cohomology with local coefficients.  Instead of going this route we study the ``motive'' $H\und{\Z}\Smash \OC_k(V)$ and build this up inductively via a collection of cofiber sequences.  This only works because the boundary maps in these cofiber sequences turn out to always vanish, by somewhat of a miracle.  

For the case where $V\not\supseteq 1$ we can use essentially the same techniques, but here we get less lucky and the ``miracle'' does not come for free.  We are able to prove that the boundary maps vanish in some familiar cases, but so far not in general.  To state our result here, let $S(V)$ be the unit sphere inside the representation $V$.  These spheres are key to the calculations because the $\tilde{\omega}_{ij}$ maps take the form $\OC_k(V)\ra S(V)$.  The issue we run into is that not much is known about $H^\star(S(V);\mZ)$ in the case $V\not\supseteq 1$, as this object is intricately related to $H^\star(\pt;\mZ)$, which also can be mysterious.  

\newtheorem*{th:main-2}{Theorem \ref{th:main-2}}

\begin{th:main-2}
Let $G$ be a finite group and suppose that $V$ is a $G$-representation such that $\dim V\geq 2$ and $V^G=0$.  Additionally, assume for all $\ell\in \Z$ that in the sequence
\[ H^{\ell V-\ell}(\pt)\llra{\cdot a_V} H^{(\ell+1)V-\ell}(pt)
\llra{\cdot a_V}
H^{(\ell+2)V-\ell}(\pt)
\]
the first map is surjective and the second map is injective.
[These conditions are equivalent to $H^{\ell V-(\ell-1)}(S(V))=0$ for all $\ell\in \Z$.]  
Then there is a splitting of $H\mZ$-modules
\[ H\mZ\Smash \OC_k(V)_+\he \bigvee_{j=0}^{k-1}(H\mZ\Smash \Sigma^{j(V-1)}S(V)_+)^{a(k,j)}
\]
where $a(k,j)$ is an alternating sum of Stirling numbers given in Definition~\ref{aseq} in Section~\ref{se:homology}.  
\end{th:main-2}

This theorem gives an additive splitting for $H^\star(\OC_k(V);\mZ)$ in terms of shifted copies of $H^\star(S(V);\mZ)$, but the latter remains a black box.  The hypotheses of Theorem~\ref{th:main-2} are an interesting property of the ground ring $H^\star(\pt;\mZ)$ that seems to be worthy of further investigation.  As test cases we verify that these hypotheses hold when $\dim V\geq 3$ and $G$ is any of $C_p$, $C_{p^2}$, and $\Sigma_3$---see Appendix~\ref{se:verify} for details.  The case $\dim V=2$ is an anomaly and the hypotheses are almost never satisfied here when $\ell=-2$; again, details are in Appendix~\ref{se:verify}.  Despite this, in the special case $G=C_2$ we verify that Theorem~\ref{th:main-2} still holds when $\dim V=2$.  See Proposition~\ref{pr:V=2}.

We are likewise unable to give a simple presentation of the 
multiplicative structure on $H^\star(\OC_k(V);\mZ)$ in the case $V\not\supseteq 1$, at least for general $G$.  In Section~\ref{se:ring-special-case}
we discuss some of the issues involved and give a solution when $G=C_2$. 
\medskip

Our primary motivation in this paper was the study of Bredon cohomology, with configuration spaces being a convenient test area for computations.
But the results also offer an interesting perspective on some 
non-equivariant phenomena.  The relations defining the singular cohomology rings of the spaces $\OC_k(\R^n)$ only depend on the parity of $n$, but because the degrees of the generators depend on $n$ it is not obvious how to give a direct comparison between the rings. For example, it is of course not true that there is a map of spaces $\OC_k(\R^n)\ra \OC_k(\R^{n+2})$ that induces an isomorphism on singular cohomology, or a map $\OC_k(\R^n)\ra \OC_k(\R^{n+1})$ that induces an isomorphism with mod $2$ coefficients.  

However, the use of Bredon cohomology {\it does\/} lead to some direct connections here.  For example, take $G=C_2$ and $V=\R^n\oplus \R_-$, where $\R^n$ has the trivial action and $\R_-$ has the sign action.  The inclusion $i\colon \OC_k(\R^n)\inc \OC_k(V)$ {\it does\/} induce an interesting map on (Bredon) cohomology, and the diagram
\[ \xymatrixcolsep{2pc}\xymatrix{
&& H^\star(\OC_k(V);\mZ) \ar[dl]_{i^*}\ar[d]^\psi \\
H^*_{sing}(\OC_k(\R^n))\tens \M \ar[r]^-\iso & 
 H^\star(\OC_k(\R^n);\mZ) & H^*_{sing}(\OC_k(\R^{n+1});\Z)
}
\]
where $\psi$ is the ``forgetful map'' from Bredon to singular cohomology
allows {\it some\/} information to be passed between $H^*_{sing}(\OC_k(\R^n))$ and $H^*_{sing}(\OC_k(\R^{n+1}))$.  One can play a similar game when $V$ has two or more copies of $\R_-$ added on instead of one.  This phenomenon had already been noticed in the context of Borel equivariant cohomology in the work of Proudfoot and his collaborators \cite{P, MPY, DPW}, and we include discussion of the Bredon case not because it leads to any groundbreaking insights but just as a demonstration of the inner-workings of our computations. See Section~\ref{se:comparing} for discussion of this topic. 

\begin{remark}
For other recent work on Bredon homology and configuration spaces, see \cite{BQV}.  That paper focuses on Bredon homology of {\it unordered} configuration spaces, and so the results are in a somewhat different direction than the ones here.
\end{remark}

\subsection{Notational conventions} Throughout this paper $G$ denotes a finite group and $V$, $W$ denote finite-dimensional, orthogonal $G$-representations. The dimension is written $|V|$ or $\dim V$, depending on context.  If $W$ is an orthogonal subrepresentation of $V$, then we write $V-W$ for the orthogonal complement. In the case that $V$ contains a one-dimensional trivial representation, we will write $V-1$ to indicate the orthogonal complement of some choice of trivial subrepresentation. For a representation $V$ we write $S(V)$ for the unit sphere in $V$, $D(V)$ for the closed unit disk in $V$, and $S^V$ for the one-point compactification of $V$. We will sometimes use the fact that $D(V)/S(V)\iso S^V$, for example via the isomorphism $x\mapsto \tan(\frac{\pi}{2}|x|)\cdot x$ which is canonical in $V$.  

To the representation $V$ we assign numerical invariants $d(V)$ and $e(V)$ that appear throughout the paper.  The definitions are in Sections~\ref{se:euler} and \ref{se:orient}.

We use $\star$ to denote $RO(G)$-gradings and $\ast$ to denote integer gradings. 

\subsection{Acknowledgments} We are grateful to Nick Proudfoot for conversations regarding the work in \cite{DPW, Mo, MPY, P}. The second author is also thankful to Mike Hill for some helpful conversations about Bredon cohomology.

\section{Background}
\label{se:background}

In this section we review background information about Bredon cohomology, and also establish some fundamental results about both the Bredon cohomology of a point and the Bredon cohomology of spheres.  

\medskip

\subsection{\mdfn{$RO(G)$}-graded Bredon cohomology}
\label{se:RO(G)}

For foundational material on $RO(G)$-graded Bredon cohomology we refer the reader to \cite{M}.  We follow the now-common practice of using $*$ to denote integer gradings and $\star$ to denote $RO(G)$-gradings, so that the notation  $H^\star(X;\mZ)$ indicates $\star\in RO(G)$.

We will use that $H^W(\blank;\und{\Z})$ is represented by the space $AG(S^W)$, the free abelian group on $S^W$ (suitably topologized) with the basepoint $\infty$ as the zero element.   This fact is due to dos Santos \cite{dS}, building on earlier work of Lima-Filho \cite{LF}.  
The assignment $W\mapsto AG(S^W)$ defines an equivariant ring spectrum denoted $H\mZ$.  

Defining $RO(G)$-graded cohomology rings requires a certain amount of care, and it is common practice to sweep some of the subtleties under the rug.  But since signs will be important for us, we need to give a brief synopsis.  Our overall approach follows that of \cite{D2} but with a few modifications.  

Fix once and for all an ordered collection of irreducible representations $I_1,\ldots,I_r$ giving a basis for $RO(G)$.  For each $j$ fix a dual of $S^{I_j}$ in the equivariant stable homotopy category and denote it $S^{-I_j}$.  For $n_1,\ldots,n_r\geq 0$ define 
\[ S^{\pm n_1I_1\pm \ldots\pm n_rI_r}=(S^{\pm I_1})^{\Smash(n_1)}\Smash (S^{\pm I_2})^{\Smash(n_2)}\Smash \cdots \Smash (S^{\pm I_r})^{\Smash(n_r)}.
\]
For $\alpha\in RO(G)$ we write $S^\alpha$ for $S^{m_1I_1+\cdots+m_rI_r}$ where the $m_i$ are the unique integers for which $\alpha=\sum m_iI_i$.
Define
\[ H^{\alpha}(X;\mZ)=[S^{-\alpha}\Smash X_+,H\mZ] \quad
\text{and} \quad H^\star(X;\mZ)=\oplus_{\alpha\in RO(G)} H^\alpha(X;\mZ).\]

For a representation $V$ write $\und{V}$ for the corresponding element of $RO(G)$.  Note that $S^V$ and $S^{\und{V}}$ are homeomorphic, but not canonically.  Likewise, the group $H^V(X;\mZ)=[S^{-V}\Smash X_+,H\mZ]$ is isomorphic to $H^{\und{V}}(X;\mZ)$, but again not canonically.  If $\und{V}=\sum n_jI_j$, define a \dfn{rigidification} of $V$ to be an isomorphism $V\llra{\iso} \bigoplus_j I_j^{\oplus n_j}$.  Such a rigidification determines a homeomorphism $S^V\llra{\iso} S^{\und{V}}$.  By a \dfn{rigid representation} we mean a representation equipped with a chosen rigidification.

To define the multiplication on $H^\star(X;\mZ)$ we need to choose identifications $\phi_{\alpha,\beta}\colon S^{-\alpha}\Smash S^{-\beta}\llra{\iso} S^{-(\alpha+\beta)}$ for all $\alpha,\beta\in RO(G)$. Then for $x\in H^\alpha(X)$ and $y\in H^\beta(X)$ we define $xy$ to be
the composite
\[ \xymatrix{
S^{-(\alpha+\beta)}\Smash X_+ \ar[d]^{\phi^{-1}_{\alpha,\beta}\Smash \Delta} \\
S^{-\alpha}\Smash S^{-\beta}\Smash X_+\Smash X_+ \ar[r]^{1\Smash t\Smash 1} & S^{-\alpha}\Smash X_+\Smash S^{-\beta}\Smash X_+ \ar[r]^-{x\Smash y} &H\mZ\Smash H\mZ\ar[r]^-\mu & H\mZ.
}
\]
We choose the isomorphisms $\phi$ essentially as described in \cite{D2} but with one variation.  Said briefly, they are obtained by the following rules:
\begin{itemize}
\item Commute any $S^{\pm I_j}$ past $S^{\pm I_k}$ for $j\neq k$;
\item Allow any $S^{I_j}$ to annihilate an $S^{-I_j}$ that is next to it, via the duality maps;
\item Every time we commute $S^{\pm I_j}$ past $S^{\pm I_k}$ for $j\neq k$ we multiply by the sign $(-1)^{|I_j|\cdot |I_k|}$.
\end{itemize}
The third rule was {\it not\/} used in \cite{D2}, but putting it in has the effect of making the connection to non-equivariant topology cleaner (more on this in a moment).  The inclusion of these signs was a topic of \cite{DDIO}. For the resulting product to be associative one needs the $\phi$ maps to satisfy a certain coherence condition (see \cite{D2}), but that is true with these choices.  For $x\in H^\alpha(X)$ and $y\in H^\beta(X)$ one also finds the
skew-commutativity rule
\begin{equation}
\label{eq:skew-commute}
xy=yx\cdot (-1)^{|\alpha|\cdot |\beta|} 
\end{equation}
where if $\alpha=\sum n_jI_j$ then $|\alpha|=\sum n_j |I_j|$.

\begin{remark}
If one uses $H^V$ rather than $H^{\und{V}}$ then the need for the $\phi$-maps disappears, because one has the canonical isomorphism $S^V\Smash S^W\iso S^{V\oplus W}$.  To extend this to virtual representations one then needs for the notation $H^{V-W}$ to depend on both $V$ and $W$ and not just on the difference in $RO(G)$; such a group would be better written as $H^{(V,W)}$.  One then obtains a theory that is not exactly ``$RO(G)$-graded'' but where the indexing is on pairs of representations.  In this setting products like $xy$ and $yx$ live in different groups---e.g. $H^{V_1\oplus V_2}$ rather than $H^{V_2\oplus V_1}$---and so are not just related by a sign.  Rather, they are related by a certain twist isomorphism induced by the twist isomorphism on vector spaces.   This kind of ``fancy'' grading is indeed what people often mean when they say ``$RO(G)$-graded''.  On the one hand, it is an appealing Gordian-knot style approach to the situation.  On the other hand, in practical computations one often wants to work with a concrete $\Z^r$-graded ring with formulas such as (\ref{eq:skew-commute}), rather than a behemoth indexed by all representations.

Here is an example of how these issues play out in practice.  In Proposition~\ref{pr:H-S^W} below we will describe the cohomology ring of $S^W$ for any representation $W$.  To give the answer in the usual language of algebra, e.g. via generators and relations, one is pushed into the context of honest $RO(G)$-gradings.  The downside of that context is that there is no canonical generator in $\tH^{\und{V}}(S^V)$.  If we instead use the ``fancy'' $RO(G)$-grading then we have available the canonical element in $\tH^V(S^V)$, but the complexity of the grading makes  it awkward to give a complete algebraic description of the answer.  As a compromise we could assume that $V$ is a rigid representation, which gives a canonical generator in $\tH^{\und{V}}(S^V)$, but the extra assumption of rigidity can itself feel awkward and unsatisfying.

We refer the reader to \cite[Remark 4.11]{L} and \cite[Chapter XIII]{M} for further discussion of these issues.  The approach we will take
is something like the following: merge the perspectives of both the honest and fancy $RO(G)$-gradings whenever convenient, but keep in the backs of our minds that when using the former one might need to add assumptions about rigidity and when using the latter one might need to add in the effects of certain isomorphisms.
\end{remark}

As a final remark on this topic we recall the forgetful map $\psi$ from equivariant homotopy to non-equivariant homotopy.  At the basic level of spaces this sends a $G$-space $X$ to the underlying space without the $G$-action, inducing the evident map  $\psi\colon [X,Y]_G\ra [X,Y]$ on homotopy classes.  Note that $\psi(S^V)\iso S^{|V|}$, but not canonically.  Even $\psi(S^{\und{V}})\iso S^{|V|}$ is not canonical.  However, if we fix once and for all an orientation on each $I_j$ then we at least get homeomorphisms $S^{I_j}\iso S^{|I_j|}$ that are canonical up to homotopy, and therefore resulting homotopy equivalences $\psi(S^{\und{V}})\he S^{|V|}$ (again canonical up to homotopy).  One then obtains  induced maps $\psi\colon H^{\alpha}(X;\mZ)\ra H^{|\alpha|}_{sing}(X;\Z)$.  Unlike in \cite{D2}, these $\psi$'s assemble to  give a ring homomorphism---this is because of the signs that were included in the $\phi$-maps above, and is the justification for incorporating them.

\subsection{Computational tools}
We will need the following two properties of $H^\star(\blank;\mZ)$.  

\begin{prop}[Quotient Lemma]\label{quotientlem} Let $X$ be a $G$-CW complex. Then $\tilde{H}^{n}(X;\undZ)\cong \tilde{H}^{n}_{sing}(X/G;\Z)$.
\end{prop}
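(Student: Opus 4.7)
My approach is to combine the representability of integer-graded Bredon cohomology with the Dold--Thom theorem. From Section~\ref{se:RO(G)}, we have $\tilde{H}^n(X;\und{\Z}) = [X, AG(S^n)]_G$, where we take basepointed $G$-equivariant homotopy classes and $S^n$ denotes the one-point compactification of the trivial $n$-dimensional representation. Since $S^n$ carries the trivial $G$-action, so does the topological abelian group $AG(S^n)$.

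The main step is the elementary observation that when a pointed space $Y$ carries trivial $G$-action, any pointed $G$-equivariant map $X\to Y$ factors uniquely through the orbit space $X/G$, giving a natural homeomorphism of pointed mapping spaces $\Map_G^*(X,Y)\iso \Map^*(X/G,Y)$. The identification is homotopical since $(X\Smash I_+)/G \iso (X/G)\Smash I_+$ allows one to transfer $G$-equivariant pointed homotopies to ordinary ones and back. Thus $[X, Y]_G \iso [X/G, Y]$ for any such $Y$, and taking $Y = AG(S^n)$ yields
\[
\tilde{H}^n(X;\und{\Z}) \iso [X/G, AG(S^n)].
\]

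Finally, by the classical Dold--Thom theorem, $AG(S^n)$ is a non-equivariant Eilenberg--MacLane space of type $K(\Z,n)$, so the right-hand side is exactly $\tilde{H}^n_{sing}(X/G;\Z)$, as desired.

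I do not anticipate any serious obstacle here, since both ingredients are classical. The only item requiring a sentence of care is the passage from bijection of mapping spaces to bijection of homotopy classes, which follows from the naturality of the orbit functor with respect to smashing with $I_+$. An equivalent route, more in keeping with the $G$-CW hypothesis in the statement, would be via cellular cochains: the Bredon cochain complex $C^n_{Br}(X;\und{\Z})$ is a product of copies of $\und{\Z}(G/H_\alpha) = \Z$ indexed by the equivariant $n$-cells $G/H_\alpha\times D^n$ of $X$, and these $G$-cells are in canonical bijection with the (non-equivariant) cells of $X/G$; comparing boundary maps via the attaching data then gives an isomorphism of cochain complexes $C^*_{Br}(X;\und{\Z})\iso C^*_{cell}(X/G;\Z)$. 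The representability approach I have outlined avoids the cell-by-cell comparison.
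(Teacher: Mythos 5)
Your proof is correct and follows essentially the same route as the paper, which also deduces the statement from $H^n(X;\mZ)=[X,AG(S^n)]_G=[X/G,AG(S^n)]$ using the triviality of the $G$-action on $AG(S^n)$. The extra care you take with homotopies, and the alternative cellular-cochain argument, are fine but not needed beyond what the paper records.
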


The above is a standard property, following immediately from $H^n(X;\mZ)=[X,AG(S^n)]_G=[X/G,AG(S^n)]$ since the $G$-action on $AG(S^n)$ is trivial.  

\begin{prop}
\label{pr:trivial-action}
If $X$ is a space with trivial $G$-action and $H^*_{sing}(X)$ is free abelian, then $H^\star(X;\mZ)\iso H^*_{sing}(X)\tens \M$.
\end{prop}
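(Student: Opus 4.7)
The plan is to reduce the statement to a splitting of $H\mZ$-module $G$-spectra, namely
\[H\mZ \Smash X_+ \simeq \bigvee_i \Sigma^{n_i} H\mZ,\]
where $\{n_i\}$ records the degrees of a homogeneous $\Z$-basis $\{x_i\}$ of $H^*_{sing}(X)$. Once this splitting is established, an $H\mZ$-module Hom computation yields
\[H^\alpha(X;\mZ) \iso [H\mZ \Smash X_+,\, \Sigma^\alpha H\mZ]_{H\mZ\MMod} \iso \bigoplus_i \M^{\alpha - n_i},\]
which is precisely $(H^*_{sing}(X) \tens \M)^\alpha$. Multiplicativity is automatic because the splitting is natural and intertwines the diagonal on $X$ with the multiplication on $H\mZ$.

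To construct the splitting, I would first invoke Proposition~\ref{quotientlem} together with its homology counterpart---for $X$ with trivial $G$-action, integer-graded Bredon homology with coefficients in the constant Mackey functor $\mZ$ is just singular homology with $\Z$-coefficients---to identify $\pi_k^G(H\mZ \Smash X_+) = H_k^G(X;\mZ) \iso H_k^{sing}(X;\Z)$. Since $H^*_{sing}(X)$ is free abelian, so is $H_*^{sing}(X)$; fix a basis $\{y_i\}$ dual to $\{x_i\}$, with $|y_i| = n_i$. Each $y_i$ then determines a $G$-map $S^{n_i} \to H\mZ \Smash X_+$, and these extend by $H\mZ$-linearity to a map
\[\Psi \colon \bigvee_i \Sigma^{n_i} H\mZ \to H\mZ \Smash X_+.\]

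To verify $\Psi$ is a $G$-equivalence, I would check that $\pi_k^H(\Psi)$ is an isomorphism for every subgroup $H \leq G$ and every integer $k$---the standard criterion for $G$-equivalence. On the source, $\pi_k^H(\bigvee_i \Sigma^{n_i} H\mZ) = \bigoplus_{\{i \,:\, n_i = k\}} \Z$. On the target, since $X$ also has trivial $H$-action, the same identification applied at $H$ gives $\pi_k^H(H\mZ \Smash X_+) \iso H_k^{sing}(X;\Z)$. By construction $\Psi$ sends each $y_i$ to its own class, so under these identifications $\pi_k^H(\Psi)$ takes a basis to a basis.

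The main obstacle is not the mechanics of the splitting but the homology analog of the Quotient Lemma, since the paper only states the cohomology version explicitly. Working it out uniformly across all subgroups $H \leq G$ requires care about the behavior of constant Mackey functors under restriction; however, it is a short argument exploiting that $\mZ(G/H) = \Z$ for every $H$ and that the integer-graded cellular Bredon chain complex of a trivial-$G$ CW complex coincides with the ordinary singular cellular chain complex. Everything else---including the ring structure on $H^\star(X;\mZ)$---is then forced by the $H\mZ$-module structure of $\Psi$ and the freeness of its source.
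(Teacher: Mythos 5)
Your argument is correct, but it is not the paper's: the paper disposes of this in one line via the cellular spectral sequence for $H^\star(X;\mZ)$, regarded as a spectral sequence of $\M$-modules, whose $E_2$-page is $H^*_{sing}(X)\tens\M$ and which collapses because the differentials, being $\M$-linear, are determined by their values on the integer-degree generators, and those values land in groups built from $\M^{1-r}=H^{1-r}(\pt;\mZ)=0$ for $r\geq 2$; freeness then removes the extension problems. You instead prove the stronger, spectrum-level statement that $H\mZ\Smash X_+$ splits as a wedge of integer suspensions of $H\mZ$, checked on $\pi_k^H$ for all $H\leq G$. That is precisely the technique the paper deploys later for the genuinely equivariant splittings (Proposition~\ref{homologyfree}, Theorems~\ref{th:splitting} and~\ref{th:smash-descript}), so your route trades a soft collapse argument for a reusable module-level decomposition; the price is the homology bookkeeping you flag, which does go through, since the restriction of $\mZ$ to any $H\leq G$ is again the constant Mackey functor and the Bredon chain complex of a trivial-action $G$-CW complex is the ordinary cellular chain complex, with all Mackey restriction maps the identity (which is what makes $\pi_k^H(\Psi)$ carry basis to basis). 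Two points to tighten: the inference ``$H^*_{sing}(X)$ free $\Rightarrow H_*^{sing}(X)$ free'' (and the identification of the product indexed by the wedge with a direct sum) requires $X$ of finite type, a hypothesis the paper also leaves implicit; and ``multiplicativity is automatic'' deserves a sentence---the clean formulation is that the Quotient Lemma~\ref{quotientlem} already gives a ring isomorphism $H^*(X;\mZ)\iso H^*_{sing}(X)$ in integer degrees, hence a natural $\M$-algebra map $H^*_{sing}(X)\tens\M\to H^\star(X;\mZ)$, which your additive computation then shows to be bijective.
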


\begin{proof}
This follows immediately from the usual cellular spectral sequence, which can be regarded as a spectral sequence of $\M$-modules.  All differentials vanish except $d_1$.   
\end{proof}

\subsection{Euler classes and the cohomology of a point} 
\label{se:euler}

A representation $W$ has an associated ``Euler class'' $a_W\in \M^W$, and when $W$ is $G$-oriented there is an ``orientation class'' $u_W\in \M^{W-|W|}$.  These classes are described in \cite[Section 3]{HHR1} but most likely date earlier and are standard constructions.  We review them here and establish some basic properties that seem not to be in the literature.  

Let $a_W\colon S^0\to S^W$ be the map that  sends the basepoint to $\infty$ and the nonbasepoint to $0$.   
This gives an element in $\pi_{-W}(S^0)$.
If we compose with the canonical inclusion $S^W\inc AG(S^W)$ then we also get a cohomology class in 
$\tH^W(S^0;\mZ)\iso H^W(pt;\underline{\Z})$. In a slight abuse of notation we will denote both the homotopy class and the cohomology class by 
$a_W$.  In either setting, $a_W$ is commonly called the \dfn{Euler class} of $W$.

If $1\subseteq W$ then the map $S^0\to S^W$ is equivariantly null and so $a_W=0$.  The following result generalizes this by precisely determining the order of $a_W$ in all other cases:

\begin{prop}
\label{pr:a-group}
The group $H^W(\pt;\mZ)$ is generated by $a_W$ and has order equal to $\gcd\{ \#(G/H)\,\bigl |\, H\leq G, \ W^H\neq 0\}$.
\end{prop}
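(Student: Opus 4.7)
The plan is to prove the proposition in three stages: (i) an upper bound $g \cdot a_W = 0$ where $g = \gcd\{\#(G/H) : W^H \neq 0\}$, (ii) cyclicity of $H^W(pt;\und{\Z})$ with generator $a_W$, and (iii) a matching lower bound on the order.

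For (i), I would use the Mackey functor identity $\tr^G_H \circ \mathrm{res}^G_H = \#(G/H)$, which holds in any cohomology theory with coefficients in the constant Mackey functor $\und{\Z}$. Whenever $W^H \neq 0$, the restricted representation $W|_H$ contains a trivial $H$-subrepresentation, so $\mathrm{res}^G_H(a_W) = a_{W|_H} = 0$, and applying the transfer yields $\#(G/H) \cdot a_W = 0$. Taking gcds gives $g \cdot a_W = 0$.

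For (ii), the equivariant cofiber sequence $S(W)_+ \to D(W)_+ \to S^W$ reduces via the $G$-contractibility of $D(W)$ to $S(W)_+ \to S^0 \xrightarrow{a_W} S^W$: the basepoint of $S^W$ is the collapsed $S(W)$, so the unique non-basepoint of $D(W)_+$ maps to $0 \in S^W$, which is precisely the definition of $a_W$. The associated long exact sequence in $\tilde{H}^\star(-;\und{\Z})$ at degree $W$ reads
\[
\Z = \tilde{H}^W(S^W) \xrightarrow{\,1 \mapsto a_W\,} H^W(pt;\und{\Z}) \to \tilde{H}^W(S(W)_+) \to \tilde{H}^{W+1}(S^W),
\]
with the rightmost term equal to $H^1(pt;\und{\Z}) = 0$ since $\und{\Z}$ is concentrated in integer degree $0$. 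So (ii) reduces to $\tilde{H}^W(S(W)_+) = H^W(pt;\und{\Z})/\langle a_W \rangle = 0$. To verify this vanishing I would invoke the dos Santos representability $H^W(Y;\und{\Z}) = [Y, AG(S^W)]_G$, so $H^W(pt;\und{\Z}) = \pi_0(AG(S^W)^G)$. Every invariant element is a $\Z$-combination of orbit traces $\tau(O) = \sum_{x \in O} x$, and for any orbit $O$ of nonzero vectors the $G$-invariant rescaling path $t \mapsto \sum_{x \in O} tx$ connects $\tau(O)$ to $|O| \cdot [0]$ as $t \to 0$ and to the basepoint as $t \to \infty$. Hence every orbit trace is homotopic to a multiple of $[0]$, and the class $[0]$ is identified with $a_W$ via the map $a_W\colon S^0 \to S^W \hookrightarrow AG(S^W)$ sending the non-basepoint to $0$. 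This proves cyclicity and, as a bonus, recovers the case $W^G \neq 0$: an invariant path through a nonzero $G$-fixed vector shows $[0] \sim 0$, giving $H^W(pt;\und{\Z}) = 0$ and matching the gcd value $1$.

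For (iii), the same dos Santos analysis reduces the lower bound to showing that the only relations among integer multiples of $[0]$ in $\pi_0(AG(S^W)^G)$ come from the elementary null-homotopies provided by (i). I expect this to be the main obstacle. One route is a careful bookkeeping of $G$-invariant paths in $AG(S^W)$: decompose each such path into orbit moves so that any null-homotopy of $m\cdot [0]$ forces $m$ to lie in the ideal generated by $\{\#(G/H) : W^H \neq 0\}$. An alternative is to construct, for each prime $p \mid g$, a cohomology operation or a carefully chosen restriction $\mathrm{res}^G_H$ to a subgroup $H$ with $W^H = 0$ that detects the $p$-part of $a_W$ with the correct order, bootstrapping the bound subgroup-by-subgroup until the full gcd is recovered.
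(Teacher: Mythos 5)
Your steps (i) and (ii) are essentially correct. For (i) you use the Mackey-functor identity $\tr^G_H\circ\mathrm{res}^G_H=\#(G/H)$ together with $\mathrm{res}^G_H(a_W)=a_{W|_H}=0$ when $W^H\neq 0$; the paper instead exhibits explicit homotopies of invariant formal sums in $AG(S^W)^G$ (contracting an orbit trace $\sum_{gH}[gx]$ to $\infty$ versus to $0$), but both arguments are valid and give the same annihilation. For (ii), after a detour through the cofiber sequence $S(W)_+\to S^0\to S^W$ you end up reproducing the paper's argument verbatim: the radial contraction of $S^W-\{\infty\}$ to $0$ shows every class in $\pi_0(AG(S^W)^G)$ is a multiple of $[0]=a_W$.

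The genuine gap is step (iii), the lower bound on the order, which you flag as ``the main obstacle'' but do not prove; at the end of your argument you have only shown that the order of $H^W(\pt;\mZ)$ \emph{divides} the gcd. Neither of your proposed routes is workable as written. The path-bookkeeping route would require putting an arbitrary $G$-invariant null-homotopy of $m\cdot[0]$ in $AG(S^W)$ into a normal form built from orbit moves, but in this topological abelian group points along a path may collide, split, and cancel against negatively weighted points, so no such decomposition is evident; making it rigorous is at least as hard as the computation you are trying to avoid. The restriction route also stalls: for $H$ with $W^H=0$ the class $\mathrm{res}^G_H(a_W)=a_{W|_H}$ is again an Euler class whose order is governed by the subgroups of $H$, and no mechanism is given for assembling these into the gcd over subgroups of $G$. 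The paper closes this gap by a different device entirely: the isomorphisms $H^W(\pt;\mZ)\iso H_{-W}(\pt;\mZ)\iso\tH_0(S^W;\mZ)$ convert the question into a Bredon \emph{homology} computation, and an explicit $G$-CW approximation of $S^W$ --- two fixed $0$-cells at $0$ and $\infty$, and for each $H$ with $W^H\neq 0$ a pair of $1$-cells of type $G/H$ joining them --- yields a reduced cellular chain complex whose degree-zero boundary map hits exactly the multiples of the various $\#(G/H)$, so that $\tH_0(S^W;\mZ)\iso\Z/\gcd\{\#(G/H)\,|\,W^H\neq 0\}$. Some computation of this kind is indispensable to pin down the order exactly.
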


In particular, note that if $1\subseteq W$ then the gcd is $1$ and so $H^W(\pt;\mZ)=0$.  

\begin{proof}[Proof of  Proposition~\ref{pr:a-group}]
We first show $a_W$ generates the group. We use the isomorphisms
\[ H^W(\pt;\mZ)\iso [S^0,AG(S^W)]_G\iso \pi_0(AG(S^W)^G).
\]
Elements of the object on the right 
are represented by $G$-equivariant finite formal sums $\sum n_i[x_i]$ with $x_i\in S^W$ and $n_i\in \Z$, with the understanding that the term $[\infty]$ represents the zero element and so can be dropped from any formal sum. The homotopy that contracts $S^W-\{\infty\}$ to $0$ then shows that every formal sum can be deformed to be a multiple of $[0]$. The element $a_W$ is the formal sum $1[0]$ and so our cohomology group is cyclic and $a_W$ is a generator.

We can also use this isomorphism to see why $a_W$ is annihilated by the gcd. Suppose that $H\leq G$ and $W^H\neq 0$.  Let $x\in W^H-\{0\}$, and set $\alpha=\sum_{gH\in G/H} [gx]$.  This is a $G$-equivariant formal sum that doesn't involve the term $[0]$.  The homotopy that contracts all  points in $S^W-\{0\}$ to $\infty$ shows that $\alpha \sim 0$, whereas the homotopy that contracts all points in $S^W-\{\infty\}$ to $0$ shows that $\alpha \sim \#(G/H)[0]$.  So we find that $0=\#(G/H)\cdot a_W$.  Since this holds for all possible $H$, $a_W$ is annihilated by the gcd from the statement of the proposition.  

To show the order is exactly equal to the gcd, we use the isomorphisms 
\[H^W(\pt;\mZ)\iso H_{-W}(\pt;\mZ)\iso \tH_0(S^W;\mZ).\] To compute this group, we build a $G$-CW complex $Y$ with a weak equivalence $Y\to X=S^W$ and then use the cellular chain complex  of $Y$.  Start by putting in two fixed $0$-cells---the points $0$ and 
$\infty$---which gives a $0$-skeleton $Y_0$ and an inclusion $Y_0\to X$ such that $\pi_0(Y_0^H)\ra \pi_0(X^H)$ surjective for every $H$.  Next, for every $H\leq G$ such that $W^H\neq 0$ pick an element $v\in W^H-0$ and add a $1$-cell $e_v$ to $Y_0$ of type $G/H$ that connects the two $0$-cells. Then add a second $1$-cell called $e_{-v}$ of the same type and with the same boundary.  This creates a $1$-skeleton $Y_1$ together with a map $Y_1\ra X$ that sends each $e_{\pm v}$ to the ray from $0$ to $\infty$ that passes through $\pm v$.
The induced maps are such that $\pi_0(Y_1^H)\ra \pi_0(X^H)$ is an isomorphism and $\pi_1(Y_1^H)\ra \pi_1(X^H)$ is a surjection, for every $H$ (for the latter, remember that each $X^H$ is a sphere and so has $\pi_1$ equal to either $0$ or $\Z$ depending on the dimension).  Continue by adding $2$-cells and higher; the exact details are irrelevant to the $H_0$ calculation.  The reduced Bredon chain complex for $Y$ is then
\[ \cdots \lra \bigoplus_{H\leq G, W^H\neq 0} \Z^2  \lra \Z \lra 0
\]
with the generators of the summands corresponding to $H\leq G$ each being sent to $\#(G/H)$ in the target.  So $\tH_0(S^W)$ is as claimed. 
\end{proof}

For convenience of future use  set $\cD(W)=\{\#(G/H)\,\bigl |\, H\leq G, W^H\neq 0\}$ and $d(W)=\gcd \cD(W)$.

\begin{remark}
The following example shows that the gcd in Proposition~\ref{pr:a-group} cannot be replaced by a min.  When $G=\Sigma_3$, let $\sigma$ be the one-dimensional sign representation and let $\lambda$ be the irreducible representation on $\R^2=\C$ that permutes the cube roots of unity.  Take $W=\sigma\oplus \lambda$.  Then $\cD(W)=\{2,3,6\}$ and so $d(W)=1$.
\end{remark}

Here is another useful description of $d(W)$:

\begin{prop}
\[ d(W)=\gcd \bigl \{
\#(G/H)\,\bigl | \,\text{$H\leq G$ 
and $G/H$\! embeds into $W-\{0\}$} 
\bigr \}.
\]
\end{prop}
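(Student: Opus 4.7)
The plan is to compare the two sets being gcd-ed. Write $A=\{\#(G/H) : H\leq G,\ W^H\neq 0\}$ (so $d(W)=\gcd A$ by definition) and $B=\{\#(G/H) : H\leq G,\ G/H \text{ embeds in } W-\{0\}\}$. The goal is to show $\gcd A=\gcd B$, and the key observation is that these two sets are related through the operation of passing from a subgroup $H$ with a nonzero fixed vector to the actual stabilizer of that vector.

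First I would show $B\subseteq A$, which is immediate: if $\iota\colon G/H\hookrightarrow W-\{0\}$ is an equivariant injection, then $v:=\iota(eH)$ is a nonzero vector fixed by $H$ (injectivity forces $\mathrm{Stab}(v)=H$ exactly), so $W^H\neq 0$. This containment yields $\gcd A \mid \gcd B$ for free, since every element of $B$ is divisible by $\gcd A$.

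For the reverse divisibility $\gcd B \mid \gcd A$, take any $\#(G/H)\in A$, so $W^H\neq 0$; pick a nonzero $v\in W^H$ and set $K=\mathrm{Stab}(v)$. Then $H\leq K$, so $\#(G/K)$ divides $\#(G/H)$ by the standard index formula $\#(G/H)=\#(G/K)\cdot \#(K/H)$. On the other hand the orbit map $gK\mapsto gv$ is an equivariant embedding of $G/K$ into $W-\{0\}$, so $\#(G/K)\in B$. Therefore $\gcd B \mid \#(G/K) \mid \#(G/H)$, and since $\#(G/H)\in A$ was arbitrary, $\gcd B \mid \gcd A$.

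Combining the two divisibilities gives the claimed equality. There is really no obstacle here: the only subtlety is remembering that an arbitrary subgroup $H$ with $W^H\neq 0$ need not itself be the full stabilizer of a vector, so one must replace $H$ by the larger stabilizer $K$ in order to land in $B$; this is harmless because passing to a larger subgroup only shrinks the index, preserving divisibility into $\#(G/H)$.
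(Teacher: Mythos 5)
Your proof is correct and follows essentially the same route as the paper's: the containment of the "embedding" set inside the "fixed-vector" set gives one divisibility, and replacing an arbitrary $H$ with $W^H\neq 0$ by the stabilizer $K=\stab(v)$ of a chosen nonzero fixed vector, using $\#(G/H)=\#(G/K)\cdot\#(K/H)$, gives the other. No gaps; the observation that the orbit map $gK\mapsto gv$ embeds $G/K$ into $W-\{0\}$ is exactly the paper's argument.
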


\begin{proof}
Let $\cE(W)$ be the set from the statement of the proposition.  Clearly $\cE(W)\subseteq \cD(W)$.  But if $H\leq G$ and $W^H\neq 0$, pick $x\in W^H-\{0\}$ and set $J=\stab(x)$.  Then $H\subseteq J$ and $G/J$ embeds into $W-\{0\}$.  Since $\#(G/J)\cdot \#(J/H)=\#(G/H)$, this proves that every element of $\cD(W)$ is a multiple of an element of $\cE(W)$.  Since $\cE(W)\subseteq \cD(W)$, the two sets will have the same gcd.  
\end{proof}

\begin{example}
Let the generator of the cyclic group $C_n$  act on $W=\R^2$ by rotation through $\frac{2\pi}{n}$ radians.  Then $d(W)=n$, and so this example shows that the order of $H^W(\pt;\mZ)$ can be arbitrarily large.

More generally, if $G$ is arbitrary and the action on $W-\{0\}$ is free, then $d(W)=\#G$. 
\end{example}

In contrast to the above example, we have the following result for odd-dimensional representations:

\begin{prop}
\label{pr:a_odd}
If $\dim W$ is odd then $d(W)$ is either $1$ or $2$.  Equivalently, for any representation $W$ the integer $1-(-1)^{|W|}$ annihilates $a_W$.
\end{prop}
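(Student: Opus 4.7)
The two statements are equivalent via Proposition~\ref{pr:a-group}. When $|W|$ is even, $1-(-1)^{|W|}=0$ and the annihilation claim is vacuous. When $|W|$ is odd, asking that $2a_W=0$ is exactly asking that the order of $a_W$, which by Proposition~\ref{pr:a-group} equals $d(W)$, divide $2$. So the task reduces to showing $d(W)\in\{1,2\}$ for $|W|$ odd.

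The plan is to obtain this by computing the Euler characteristic of the unit sphere $S(W)$ in two ways. On the topological side, since $|W|$ is odd, $S(W)$ is a sphere of even dimension $|W|-1$ and so $\chi(S(W))=2$. On the other hand, equip $S(W)$ with an equivariant CW (or simplicial) decomposition---this exists by Illman's theorem on smooth $G$-manifolds, and can alternatively be built by hand by extending the cell structure used in the proof of Proposition~\ref{pr:a-group}. The cells come in orbits of the form $G/H_i$, where $H_i$ is the stabilizer of any interior point of a representative cell. Such a point lies in $S(W)\subseteq W\setminus\{0\}$ and is fixed by $H_i$, so $W^{H_i}\neq 0$, and therefore $d(W)$ divides $\#(G/H_i)$. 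Summing over orbits,
\[
\chi(S(W))\;=\;\sum_i (-1)^{\dim c_i}\,\#(G/H_i),
\]
every term of which is divisible by $d(W)$. Hence $d(W)$ divides $\chi(S(W))=2$, as desired.

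The only friction in this plan is locating a convenient equivariant cell structure on $S(W)$; beyond that, it is essentially an exercise in orbit bookkeeping. A sanity check is furnished by $G=A_5$ acting on $W=\R^3$ as icosahedral rotations: the dodecahedral decomposition of $S(W)=S^2$ has orbit sizes $12,30,20$, yielding $\chi=12-30+20=2$ in agreement with the known value $d(W)=2$.
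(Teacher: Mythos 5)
Your argument is correct, and it is genuinely different from both proofs the paper gives. The paper's Appendix~\ref{se:app} proof works prime by prime: Smith theory shows each odd prime is excluded from $d(W)$ (since a Sylow $p$-subgroup must have $\chi(S^{W^H})\equiv \chi(S^W)=0 \bmod p$, forcing $W^H\neq 0$), but the prime $2$ then requires a separate argument via real representation theory (division algebras, complexification, and the fact that complex irreducible dimensions divide $\#G$). The paper's second proof (Remark~\ref{re:a_odd}) is a purely Bredon-cohomological sign computation coming from the antipodal map. Your route is a third one: a single Euler-characteristic count on a finite $G$-CW structure for $S(W)$. The key point you exploit, which the appendix proof does not, is that \emph{every} cell stabilizer $H$ automatically satisfies $W^H\neq 0$ (an interior point of the cell is a nonzero $H$-fixed vector), so $d(W)$ divides every orbit size in the cell decomposition and hence divides $\chi(S(W))=2$; this handles all primes at once, including $2$, with no representation theory. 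The cost is the appeal to Illman's equivariant triangulation theorem (or a by-hand construction) to get the $G$-CW structure, whereas the paper's arguments need no such input. Your argument also yields slightly more in general, namely $d(W)\mid \chi(S(W))$ for every $W$ (vacuous in even rank, where $\chi=0$). One small slip in your sanity check: the face/vertex counts you list ($12,30,20$ in degrees $0,1,2$) are those of the icosahedron, not the dodecahedron, and strictly speaking one should barycentrically subdivide to get an honest $G$-CW structure; neither affects the conclusion $\chi=2$ or the divisibility argument.
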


It is possible to give a proof of this result that uses only Smith Theory and elementary algebra---we include this in Appendix~\ref{se:app} below.  
There is also a surprisingly simple approach that instead uses only Bredon cohomology---see Remark~\ref{re:a_odd}.  The Bredon proof is contingent on getting certain signs correct, and so we have also included the Smith Theory approach as a reality-check.

\begin{cor}
\label{co:a-commute}
For any $G$-space $X$, any $x\in H^\star(X;\mZ)$, and any $W$,  we have the strict commutativity $a_Wx=xa_W$.
\end{cor}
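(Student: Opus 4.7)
The plan is to combine the general skew-commutativity rule \eqref{eq:skew-commute} with the $2$-torsion result of Proposition~\ref{pr:a_odd}. Write $|W|$ for $\dim W$ and let $\alpha \in RO(G)$ be the degree of $x$, so that $|\alpha|$ is an integer. Applying \eqref{eq:skew-commute} with $y = a_W$ gives the identity
\[
a_W x = x a_W \cdot (-1)^{|W|\cdot |\alpha|}.
\]
So the claim reduces to showing that this sign is effectively trivial in all cases.

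First I would handle the easy case: if $|W|$ is even, or if $|\alpha|$ is even, then $(-1)^{|W|\cdot|\alpha|} = 1$ and the identity $a_W x = x a_W$ is immediate. The only remaining case is $|W|$ odd and $|\alpha|$ odd, where we get $a_W x = -x a_W$, so it suffices to show that $2 \cdot x a_W = 0$.

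For that final step I would invoke Proposition~\ref{pr:a_odd}: when $|W|$ is odd, $1 - (-1)^{|W|} = 2$ annihilates $a_W$, i.e.\ $2 a_W = 0$ in $\M$. Since multiplication by $x$ is additive, $2 \cdot x a_W = x \cdot (2 a_W) = 0$, which gives $-x a_W = x a_W$ and hence $a_W x = x a_W$ as desired.

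There is no real obstacle here; the only subtlety is bookkeeping the signs correctly, and the whole argument is essentially the observation that the only potentially bad sign $(-1)^{|W|\cdot|\alpha|}$ arises exactly when $a_W$ is already $2$-torsion. In particular, this corollary is the reason for having included the sign convention in the $\phi$-maps of Section~\ref{se:RO(G)}: without those signs the statement would fail for odd-dimensional $W$ acting on odd-degree classes.
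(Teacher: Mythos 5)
Your proposal is correct and follows essentially the same route as the paper: reduce to homogeneous $x$, apply the skew-commutativity rule (\ref{eq:skew-commute}), and observe that the only problematic sign occurs when $|W|$ is odd, in which case Proposition~\ref{pr:a_odd} gives $2a_W=0$ so that $xa_W=-xa_W$. The paper's proof is a condensed version of exactly this argument.
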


\begin{proof}
It suffices to prove this when $x$ is homogeneous, say of degree $\alpha\in RO(G)$. 
Then skew-commutativity says that $a_Wx=(-1)^{|\alpha|\cdot |W|}xa_W$.  If $|W|$ is even the sign is $+1$, but if $|W|$ is odd then 
by Proposition~\ref{pr:a_odd} $a_W=-a_W$ and so the sign can be interpreted as $+1$ even if it isn't.  
\end{proof}

As one final observation on this topic we record the following:

\begin{prop}
\label{pr:euler-mult}
For any representations $W_1$ and $W_2$,
the multiplication pairing 
\[ H^{W_1}(\pt;\mZ)\tens H^{W_2}(\pt;\mZ)\ra H^{W_1\oplus W_2}(\pt;\mZ)
\]is an isomorphism.  
\end{prop}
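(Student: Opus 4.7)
The plan is to reduce to a calculation with cyclic groups using Proposition~\ref{pr:a-group}. By that proposition, both sides of the pairing are cyclic: the source is $\Z/d(W_1)\tens \Z/d(W_2)$ generated by $a_{W_1}\tens a_{W_2}$, and the target is $\Z/d(W_1\oplus W_2)$ generated by $a_{W_1\oplus W_2}$.

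First I would verify that the pairing sends $a_{W_1}\tens a_{W_2}$ to $a_{W_1\oplus W_2}$. This is essentially immediate from the construction: each $a_{W_i}$ is represented by the unit-to-basepoint map $S^0\ra S^{W_i}$ composed with the canonical inclusion $S^{W_i}\inc AG(S^{W_i})$, and the product in $H^\star(\pt;\mZ)$ is defined via the smash product and the ring structure on $H\mZ$. Smashing the two representatives yields (modulo the $\phi$-isomorphism identifying $S^{W_1}\Smash S^{W_2}$ with $S^{W_1\oplus W_2}$) exactly the map representing $a_{W_1\oplus W_2}$. Consequently the pairing is surjective, since $a_{W_1\oplus W_2}$ generates the target.

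For injectivity it suffices to show the two cyclic groups have the same order, i.e.\ that $d(W_1\oplus W_2)=\gcd(d(W_1),d(W_2))$. The key point is that for any subgroup $H\leq G$ we have $(W_1\oplus W_2)^H=W_1^H\oplus W_2^H$, so $(W_1\oplus W_2)^H\neq 0$ if and only if $W_1^H\neq 0$ or $W_2^H\neq 0$. Therefore $\cD(W_1\oplus W_2)=\cD(W_1)\cup \cD(W_2)$, and taking gcds gives
\[ d(W_1\oplus W_2)=\gcd\bigl(\cD(W_1)\cup \cD(W_2)\bigr)=\gcd\bigl(d(W_1),d(W_2)\bigr). \]
Since $\Z/d(W_1)\tens \Z/d(W_2)\iso \Z/\gcd(d(W_1),d(W_2))$, the source and target have the same order, and the surjection must be an isomorphism.

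The only delicate point is the first step, where one needs to trust that the sign conventions and the $\phi$-maps built into the $RO(G)$-graded product do not introduce a spurious unit when comparing $a_{W_1}\cdot a_{W_2}$ with $a_{W_1\oplus W_2}$; since the Euler class lies in a cyclic group and both expressions manifestly come from the same underlying map $S^0\ra S^{W_1\oplus W_2}$ up to a canonical identification, this is harmless. Everything else is just gcd bookkeeping.
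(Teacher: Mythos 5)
Your proposal is correct and follows essentially the same route as the paper: show the pairing carries $a_{W_1}\tens a_{W_2}$ to (a generator, namely) $a_{W_1\oplus W_2}$, hence is surjective, and then match orders via $(W_1\oplus W_2)^H=W_1^H\oplus W_2^H$ and the resulting identity $d(W_1\oplus W_2)=\gcd(d(W_1),d(W_2))$. The only difference is cosmetic: the paper verifies the product formula concretely in the formal-sum model $\pi_0(AG(S^W)^G)$, which sidesteps the sign/unit worry you flag, but as you note a unit ambiguity would not affect surjectivity anyway.
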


\begin{proof}
Recall the isomorphism $H^W(\pt;\mZ)\iso \pi_0 \bigl ( AG(S^W)^G\bigr)$, with elements of the last group represented by formal sums.    
In terms of these formal sums, the pairing takes a sum $\sum m_i[x_i]$ on $S^{W_1}$ and a sum $\sum n_j[y_j]$ on $S^{W_2}$ and sends this to the formal sum $\sum_{i,j} m_in_j [(x_i,y_j)]$, with the ordered pair suitably interpreted if either coordinate is $\infty$.  From this description one sees immediately that $a_{W_1}\tens a_{W_2}$ is sent to $a_{W_1\oplus W_2}$, and so the pairing is a surjection.  It then suffices to check that the orders of the domain and codomain are equal.  But since $(W_1\oplus W_2)^H=W_1^H \oplus W_2^H$ it follows at once that $d(W_1\oplus W_2)=\gcd\{d(W_1),d(W_2)\}$, and this is precisely the order of $\Z/d(W_1)\tens \Z/d(W_2)$ (note that we are using Proposition~\ref{pr:a-group} in several places here).
\end{proof}

By the above work we understand the groups $H^\alpha(\pt;\mZ)$ when $\alpha$ is a positive element of $RO(G)$---i.e., $\alpha$ is an actual representation.  We will occasionally also need to know about the case where $\alpha$ is a {\it negative} element of $RO(G)$, but that case is much easier:

\begin{prop}
\label{pr:negative-W}
If $W$ is a nonzero $G$-representation then $H^{-W}(\pt;\mZ)=0$.  
Moreoever, if $W\neq 1$ then $H^{1-W}(\pt;\mZ)=0$.  
\end{prop}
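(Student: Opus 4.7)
The plan is to translate both statements into integer-graded calculations on $S^W$ via suspension isomorphism, and then apply Proposition~\ref{quotientlem} to compute in ordinary singular cohomology of $S^W/G$. Smashing with $S^W$ provides isomorphisms
\[
H^{-W}(\pt;\mZ) \iso \tilde H^{0}(S^W;\mZ) \iso \tilde H^{0}_{sing}(S^W/G;\Z),
\qquad
H^{1-W}(\pt;\mZ) \iso \tilde H^{1}(S^W;\mZ) \iso \tilde H^{1}_{sing}(S^W/G;\Z).
\]
The first statement is then immediate: when $W\neq 0$, the space $S^W$ is path-connected (being the one-point compactification of $W\iso \R^{|W|}$), so $S^W/G$ is path-connected and its reduced $H^0$ vanishes.

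For the second statement, I would feed the cofiber sequence $S(W)_+ \lra D(W)_+ \lra S^W$ (coming from $D(W)/S(W)\iso S^W$) into $\tilde H^\star(-;\mZ)$. Since $D(W)$ is equivariantly contractible, the flanking terms reduce to $H^\star(\pt;\mZ)$; in particular $\tilde H^0(D(W)_+;\mZ)\iso \Z$ and $\tilde H^1(D(W)_+;\mZ)=0$. Combined with the vanishing $\tilde H^0(S^W;\mZ)=0$ from the previous paragraph, the long exact sequence collapses to
\[
0 \lra \Z \lra \tilde H^0(S(W)_+;\mZ) \lra \tilde H^1(S^W;\mZ) \lra 0,
\]
where the first map is induced by $S(W)_+\inc D(W)_+$. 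Using Proposition~\ref{quotientlem} once more, $\tilde H^0(S(W)_+;\mZ)\iso H^0_{sing}(S(W)/G;\Z)\iso \Z^c$, where $c$ is the number of path components of $S(W)/G$, and the displayed map is the diagonal $1\mapsto (1,\dots,1)$. So this map is an isomorphism, forcing $\tilde H^1(S^W;\mZ) = 0$, exactly when $c=1$.

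What remains is to verify that $S(W)/G$ is path-connected whenever $W\neq 0,1$. If $\dim W \geq 2$ this is automatic because $S(W)$ is itself path-connected. If $\dim W = 1$, the hypothesis $W\neq 1$ forces $W$ to be a nontrivial $1$-dimensional representation, so some element of $G$ acts on $W$ by $-1$ and thereby acts transitively on $S(W)=\{\pm 1\}$, making the quotient a single point. This small case check is the only delicate point in the argument and is precisely where the hypothesis $W\neq 1$ enters; the rest is formal manipulation of the cofiber sequence.
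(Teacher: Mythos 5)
Your argument is correct and matches the paper's: both parts reduce via the Quotient Lemma to the connectivity of $S^W/G$ and of $S(W)/G$, respectively. The only cosmetic difference is that for the second statement the paper invokes the identification $S^W\iso \Sigma_u S(W)$ (unreduced suspension, which commutes with taking $G$-quotients) directly, whereas you run the long exact sequence of the cofiber sequence $S(W)_+\to D(W)_+\to S^W$ --- two presentations of the same computation --- and you additionally spell out the one-dimensional case that the paper leaves implicit.
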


\begin{proof}
For the first part we use that $H^{-W}(\pt;\mZ)\iso \tH^{-W}(S^0;\mZ)\iso \tH^0(S^W;\mZ)\iso \tH^0_{sing}(S^W/G)=0$.  The third isomorphism is by the Quotient Lemma (\ref{quotientlem}), and the last equality holds because $S^W$ is connected and therefore $S^W/G$ is as well.

Similarly, use that $S^W$ is the unreduced suspension of $S(W)$ to get that
\[ H^{1-W}(\pt)=\tH^1(S^W)\iso \tH^1(\Sigma_u S(W))=\tH^1_{sing}(\Sigma_u(S(W)/G))=\tH^0_{sing}(S(W)/G).
\]
As long as $W\neq 1$ the space $S(W)/G$ will be connected and so the above group vanishes.
\end{proof}

\subsection{The orientation classes}
\label{se:orient}

We say that a representation $V$ is \dfn{orientable} if each element of $G$ acts on $V$ with positive determinant.  This is equivalent to saying that the induced $G$-action on $H_{d}^{sing}(S^V)$ is trivial where $d=\dim V$, and also to the analogous statement for cohomology.   An \dfn{orientation} of $V$ is a choice of generator for $H_{d}^{sing}(S^V)$.  

When $V$ is orientable the forgetful map $\psi\colon H_d(S^V)\ra H_d^{sing}(S^V)$ is an isomorphism.  This is explained in \cite[Example 3.9]{HHR1}.   A choice of orientation for $V$ therefore determines an element $\mathfrak{o}_V\in H_d(S^V)$.   We can obtain a corresponding element in cohomology via the isomorphisms in the diagram

\[ \xymatrixcolsep{0.7pc}\xymatrix{
H_d(S^V) \ar[d]_\psi\ar[r]^-\iso & [S^d,S^V\Smash H\mZ] \ar[d]\ar[r]^-\iso & [S^{0},S^{V-d}\Smash H\mZ] \ar[r]^-\iso\ar[d] & H^{V-d}(\pt) \ar[d]\ar[dr]^\psi\\
H_d^{sing}(S^V) \ar@{=}[r] & [G_+\!\Smash\! S^d,S^{V}\Smash H\mZ] \ar[r]^-\iso & [G_+,S^{V-d}\Smash H\mZ] \ar[r]^-\iso & H^{V-d}(G) \ar@{=}[r] & H^{0}_{sing}(\pt).
}
\]
Here the vertical maps are induced by $G_+\ra S^0$.  The element in $H^{V-d}(\pt)$ obtained by pushing $\mathfrak{o}_V$ across the top row is denoted $u_V$.  Note that $\psi(u_V)=1$.

\begin{remark}
Warning: The $\psi$ at the right of the above diagram is not well-defined unless $V$ is oriented, since it requires an identification $S^V\iso S^d$ in the classical stable homotopy category.  So the orientation is coming up both in the choice of generator and in the definition of $\psi$, which is why the choices ``cancel out'' to give $\psi(u_V)=1$.  The orientation is also used in the identification $H^{V-d}(G)=H^0_{sing}(\pt)$.  
\end{remark}

Let $G_V$ be the isotropy group of $V$, and define $e(V)=\#(G/G_V)$.  This measures the size of a generic orbit in $V$; alternatively, it is the degree of the branched cover $S^V\ra S^V/G$.  Note that $e(V\oplus W)$ is always a common multiple of $e(V)$ and $e(W)$.   As an example to see how the $e$-invariant compares to other invariants we have considered, let $G=\Sigma_3$, let $\sigma$ be the sign representation, and let $\lambda$ be the irreducible representation on $\R^2$ as the symmetries of an equilateral triangle.  We have
\[ \cD(\sigma)=\{2,6\}, \quad d(\sigma)=2, \quad e(\sigma)=2;
\]
\[ \cD(\lambda)=\{3,6\}, \quad d(\lambda)=3, \quad e(\lambda)=6;
\]
\[ \cD(\sigma\oplus\lambda)=\{2,3,6\}, \quad d(\sigma\oplus\lambda)=1,\quad e(\sigma\oplus\lambda)=6.\]
It is always true that $e(V)\in \cD(V)$, and so $d(V)|e(V)$.  

If $V$ is oriented then $H^{d-V}(\pt)\iso \Z$ and is generated by an element denoted $\frac{e(V)}{u_V}$, which has the evident property that
\[ u_V\cdot \frac{e(V)}{u_V}=e(V)\cdot 1 \in H^0(\pt).
\]
This follows from using the isomorphisms $H^{d-V}(\pt)\iso \tH^d(S^V)\iso \tH^d_{sing}(S^V/G)$ and then proving that the map on degree $d$ singular cohomology induced by the projection $S^V\ra S^V/G$ is $\Z\ra \Z$ sending a generator to $e(V)$ times a generator.  Alternatively, the class $\frac{e(V)}{u_V}$ arises as the transfer of a certain class $e_V\in H^\star(G/G_V)$; see \cite[Definition 9.9.7 and Lemma 9.9.10]{HHR4}.  We will not need the class $e_V$ in what follows.

\subsection{The cohomology of representation spheres}
We next turn to the cohomology of spheres $S^W$.  The inclusion $S^W\inc AG(S^W)$ gives a canonical element $\iota_W\in \tH^W(S^W;\mZ)$, and the $RO(G)$-graded suspension theorem implies that additively $H^\star(S^W;\mZ)$ is the free $\M$-module generated by $1$ and $\iota_W$.  It only remains to specify the product structure:

\begin{prop}
\label{pr:H-S^W}
$H^\star(S^W;\underline{\Z})$ is the free $\M$-algebra generated by $\iota_W$ subject to the relation $\iota_W^2=a_W\iota_W$.
\end{prop}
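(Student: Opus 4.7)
Since the additive structure is already in hand, only the product $\iota_W^2$ remains. Write uniquely $\iota_W^2=\alpha+\beta\iota_W$ with $\alpha\in\M^{2W}$ and $\beta\in\M^W$; the plan is to show $\alpha=0$ by a splitting argument and $\beta=a_W$ by an isotopy of the diagonal.

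First, for $\alpha$: the inclusion $j\colon\{\infty\}\hookrightarrow S^W$ of the basepoint is a section of $S^W\to\pt$, so $j^*\colon H^\star(S^W;\mZ)\to\M$ sends each class to its ``coefficient of $1$'' in the basis $\{1,\iota_W\}$. Because $\iota_W$ is a reduced class, $j^*(\iota_W)=0$, hence $j^*(\iota_W^2)=0$, which forces $\alpha=0$.

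For $\beta$, I would unwind the definition of the cup product to get $\iota_W^2=\Delta^*(\iota_W\wedge\iota_W)$, where $\Delta\colon S^W\to S^W\wedge S^W$ is the reduced diagonal, and then replace $\Delta$ by something simpler via a linear change of coordinates. Consider the $G$-equivariant family $\phi_t\colon W\oplus W\to W\oplus W$, $(u,v)\mapsto(u,v-tu)$, which is a linear isomorphism of determinant $1$ for every $t\in[0,1]$, equivariant under the diagonal $G$-action. The family induces a $G$-equivariant homotopy on $S^W\wedge S^W\cong S^{W\oplus W}$ from $\mathrm{id}=\phi_0$ to $\phi_1$, so $\phi_1^*=\mathrm{id}$ on cohomology. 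But $\phi_1\circ\Delta$ sends $w\mapsto(w,0)$, which factors as the composite $S^W\cong S^W\wedge S^0\xrightarrow{1\wedge a_W}S^W\wedge S^W$. Therefore
\[
\iota_W^2=\Delta^*(\iota_W\wedge\iota_W)=(1\wedge a_W)^*\phi_1^*(\iota_W\wedge\iota_W)=(1\wedge a_W)^*(\iota_W\wedge\iota_W),
\]
and the last term, by the external-product definition, represents $\iota_W\cdot(a_W^*\iota_W)=\iota_W\cdot a_W=a_W\iota_W$, the final equality by Corollary~\ref{co:a-commute}.

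The main obstacle is sign bookkeeping in the $RO(G)$-graded product: one must verify that no extra signs arise when identifying $(1\wedge a_W)^*(\iota_W\wedge\iota_W)$ with $\iota_W\cdot a_W$ under the conventions of Section~\ref{se:RO(G)}. Fortunately this particular identification requires no commutation of factors $S^{\pm I_j}$, so none of the sign rules should intervene. A secondary point is verifying that the isotopy $\phi_t$ extends continuously to the one-point compactifications, which it does since each $\phi_t^{-1}$ has norm bounded uniformly in $t\in[0,1]$.
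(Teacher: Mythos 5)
Your argument is correct and is essentially the paper's proof: the key step in both is that the reduced diagonal $\Delta\colon S^W\to S^W\Smash S^W$ is equivariantly homotopic to $\id\Smash a_W$, and your shear $\phi_t(u,v)=(u,v-tu)$ restricted along the diagonal gives exactly the paper's homotopy $(x,t)\mapsto(x,tx)$ on $D(W)/S(W)$. (The preliminary step showing $\alpha=0$ is redundant since the final identification already lands on $a_W\iota_W$ with no constant term, and the intermediate expression $(1\Smash a_W)^*\phi_1^*$ should just read $(\phi_1\circ\Delta)^*=(1\Smash a_W)^*$, but neither affects the conclusion.)
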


Again, observe that if $1\subseteq W$ then $a_W=0$ and we obtain the familiar relation $\iota_W^2=0$ from non-equivariant topology.  Also note that ``$R$-algebra'' typically means a ring homomorphism $R\ra S$ where $R$ acts centrally, and in our context ``centrally'' means in the skew-commutative sense of (\ref{eq:skew-commute}).  But also recall that $a_W\iota_W=\iota_Wa_W$ by Corollary~\ref{co:a-commute}.

\begin{proof}[Proof of Proposition~\ref{pr:H-S^W}]
Write $\iota=\iota_W$ and $K_W=AG(S^W)$.
The product $\iota^2$ is represented by the following composite:
\[ \xymatrix{
S^W\ar[r]^-\Delta & S^W\Smash S^W \ar[r]^-{\iota\Smash \iota}  & K_W\Smash K_W\ar[r]^-\mu & K_{2W}.
}
\]
The key observation is that $\Delta$ is homotopic to the map
\[ \xymatrix{
S^W \ar@{=}[r] & S^W\Smash S^0 \ar[r]^-{\id\Smash a_W} & S^W\Smash S^W
}\]
and also to the map $a_W\Smash \id$.  Since the composite $\iota\circ a_W$ is the cohomology class $a_W\in \M$, the claim about $\Delta$ immediately yields $\iota^2=a_W \iota=\iota a_W$.  

To see the homotopy, use $D(W)/S(W)$ as our model for $S^W$.  The homotopy is $H\colon D(W)/S(W)\times I \ra D(W)/S(W)\Smash D(W)/S(W)$
given by $H(x,t)=(x,tx)$ (or, in the second case, by $H(x,t)=(tx,x)$).
\end{proof}

We will also need to know how the antipodal map $A\colon S^W\to S^W$ acts on cohomology.  This is another instance where the Euler class plays a role.  Note that $A$ is not a pointed map, and so does not preserve the  reduced cohomology.  Instead the following is true:

\begin{prop} 
\label{pr:antipodal}
For $A\colon S^W\to S^W$ the antipodal map we have the formula
$A^*(\iota_W)=(-1)^{|W|+1}\iota_W+a_W$. 
\end{prop}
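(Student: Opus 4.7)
The plan is to exploit the additive decomposition of $H^W(S^W;\und{\Z})$ provided by Proposition~\ref{pr:H-S^W}: in degree $W$ every element has the form $s\,\iota_W + r\,a_W$ with $s\in\Z$ and $r\in\Z/d(W)$, where $a_W$ denotes the pullback along the projection $p\colon S^W\to\pt$. Accordingly I would write $A^*(\iota_W) = n\,\iota_W + m\,a_W$ for a unique $n\in\Z$ and $m\in\Z/d(W)$, and reduce to determining $n$ and $m$ separately.

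First I would pin down $n$ by passing to singular cohomology via the forgetful map $\psi$. Since $\psi(\iota_W)$ is the fundamental class of $S^{|W|}$, since $\psi(a_W) = 0$ (as $H^{|W|}_{\mathrm{sing}}(\pt) = 0$ for $|W|\geq 1$), and since the underlying nonequivariant antipodal on $S^{|W|}$ has degree $(-1)^{|W|+1}$, one reads off $n = (-1)^{|W|+1}$.

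Next I would pin down $m$ by pulling back along the basepoint inclusion $\iota_\infty\colon \pt \to S^W$. The crucial feature here---flagged in the comment preceding the statement---is the failure of $A$ to be a pointed map: under the homeomorphism $S^W \cong S(W\oplus \R)$ the antipodal interchanges the two poles, which in the one-point compactification model swaps $\infty$ with $0\in W$. Thus $\iota_\infty^*A^*(\iota_W) = (A\iota_\infty)^*(\iota_W)$ is represented by the pointed map $S^0 \to AG(S^W)$ sending the non-basepoint to $[0]\in AG(S^W)$, and by the very definition recalled in Section~\ref{se:euler} this is exactly the Euler class $a_W \in H^W(\pt;\und{\Z})$. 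On the other hand $\iota_\infty^*(n\iota_W + m a_W) = m\,a_W$ because $\iota_W$ \emph{is} pointed and so $\iota_\infty^*\iota_W = 0$. Comparing yields $m\,a_W = a_W$, so $m = 1$.

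The main obstacle I expect is the bookkeeping around non-pointedness: one must commit to the non-pointed model of the antipodal (coming from $S(W\oplus\R)$, rather than the pointed model from $D(W)/S(W)$), verify that $A(\infty)$ is $G$-fixed, and confirm that the associated class in $H^W(\pt;\und{\Z})$ is really $a_W$. A more hands-on alternative, in the style of the proof of Proposition~\ref{pr:H-S^W}, would be to build an explicit equivariant homotopy inside $AG(S^W)$ between the map $v\mapsto [A(v)]$ and the sum $v\mapsto (-1)^{|W|+1}[v] + [0]$ via scalar-multiplication homotopies; this is conceptually cleaner but requires careful attention to continuity at $\infty$.
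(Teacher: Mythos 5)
Your proof is correct and follows essentially the same strategy as the paper's: write $A^*(\iota_W)=n\,\iota_W+m\,a_W$, determine $n$ by applying the forgetful map $\psi$, and determine $m$ by restricting to a $G$-fixed point, using that $A$ swaps $0$ and $\infty$ together with the identities $j_\infty^*(\iota_W)=0$ and $j_0^*(\iota_W)=a_W$ for the two point-inclusions. Your choice to restrict at $\infty$ is in fact marginally cleaner than the paper's argument, which first restricts at $0$ to obtain the coefficient $(-1)^{|W|}a_W$ and then must invoke the auxiliary identity $(-1)^{|W|}a_W=a_W$ (itself obtained by restricting at $\infty$) to finish---an identity the paper retains because it also reproves Proposition~\ref{pr:a_odd}.
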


\begin{proof}
Let $j_0,j_\infty\colon \pt \ra S^W$ be the maps sending the basepoint to $0$ or $\infty$, respectively.  Then $\iota_W\circ j_\infty$ is the inclusion of the basepoint, so $j_\infty^*(\iota_W)=0$.  On the other hand, $j_0^*(\iota_W)=a_W$.  The proof will use these formulas together with $A\circ j_0=j_\infty$ and $A\circ j_\infty=j_0$.

We know that $H^W(S^W)=\M^W\oplus \tH^W(S^W)$, with the first summand generated by $a_W$ and the second by $\iota_W$.  Since $A^*(\iota_W)\in H^W(S^W)$ we can therefore write $A^*(\iota_W)=ka_W+m\iota_W$ for some $k,m\in \Z$ (really $k$ is only defined modulo $d(W)$).  
Applying the forgetful map $\psi$ into nonequivariant singular cohomology, we have $\psi(\iota_W)=\iota$, which is a generator of $H^{|W|}_{sing}(S^{|W|})$, and $\psi(a_W)=0$.  This shows that $m=(-1)^{|W|+1}$, the usual nonequivariant degree of the antipodal map on $S^{|W|}$.  Next consider the equation
\[ 0 = j_\infty^*(\iota_W)=j_0^*A^*(\iota_W)=kj_0^*(a_W)+mj_0^*(\iota_W)=(k+m)a_W.\]
So $ka_W=-ma_W$ and we obtain the formula
\begin{equation} 
\label{eq:A-eq}
A^*(\iota_W)=-ma_W+m\iota_W=(-1)^{|W|+1}\iota_W+(-1)^{|W|}a_W
\end{equation}
(we have switched the order of the terms in the last equality). This is almost what we want--we just need to show $(-1)^{|W|}a_W=a_W$.
To do so, we can also consider the formula
\begin{equation}
\label{eq:j-a-formula}
a_W=j_0^*(\iota_W)=j_\infty^*(A^*\iota_W)=(-1)^{|W|+1}j_\infty^*(\iota_W)+(-1)^{|W|} j_\infty^*(a_W) = (-1)^{|W|}a_W.
\end{equation}
So we may rewrite (\ref{eq:A-eq}) as $A^*(\iota_W)=(-1)^{|W|+1}\iota_W+a_W$.
\end{proof}

\begin{remark}
\label{re:a_odd}
Notice that (\ref{eq:j-a-formula}) shows that $1-(-1)^{|W|}$ annihilates $a_W$.  This proves Proposition~\ref{pr:a_odd}.
\end{remark}

One can also consider the cohomology of the spheres $S(W)$.  If $W\supseteq 1$ then $S(W)\iso S^{W-1}$, but otherwise $S(W)$ has no fixed point and so cannot be the 1-point compactification of any representation.  The inclusion of $S(W)$ into $D(W)$ gives a cofiber sequence of pointed spaces $S(W)_+ \ra D(W)_+ \ra S^W$, but since $D(W)$ is contractible we can also write it in the form
\begin{equation}
\label{eq:a-cofiber}
S(W)_+ \lra S^0 \llra{a_W} S^W.
\end{equation}
The resulting long exact sequence in reduced cohomology takes the form
\[ \xymatrixcolsep{1.5pc}\xymatrix{ 
\cdots & \M^{\alpha+1}\ar[l] & \M^{\alpha+1-W}\ar[l]_{a_W} & H^\alpha(S(W)) \ar[l] & \M^\alpha \ar[l] & \M^{\alpha-W} \ar[l]_{a_W} & \cdots \ar[l] 
}\]
So the groups $H^\star(S(W))$ are extensions of $\ker a_W$ and $\coker a_W$.  Determining these groups seems to be difficult in general, but the following is one useful fact:

\begin{prop}
If $V\supseteq W$ then $H^V(S(W))=0$.    
\end{prop}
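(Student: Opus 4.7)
The plan is to feed the statement directly into the long exact sequence displayed just before it, i.e.\ the one associated to the cofiber sequence $S(W)_+ \to S^0 \xrightarrow{a_W} S^W$. Setting $\alpha = V$ gives the five-term piece
\[
\M^{V-W} \xrightarrow{\,\cdot a_W\,} \M^V \lra H^V(S(W)) \lra \M^{V+1-W} \xrightarrow{\,\cdot a_W\,} \M^{V+1}.
\]
By exactness $H^V(S(W))$ is an extension of a kernel by a cokernel, so it suffices to show two things: that the left-hand $a_W$-multiplication is surjective (killing the cokernel), and that $\M^{V+1-W}=0$ (killing the kernel).

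For surjectivity I would use that $V\supseteq W$ lets us write $V = W \oplus (V-W)$ orthogonally, so by Proposition~\ref{pr:euler-mult} the multiplication pairing $\M^W \tens \M^{V-W} \to \M^V$ is an isomorphism. By Proposition~\ref{pr:a-group} the group $\M^W$ is cyclic, generated by $a_W$; hence every element of $\M^V$ is an integer linear combination of elements $(na_W)\cdot z = a_W\cdot(nz)$ and therefore lies in the image of $\cdot a_W\colon \M^{V-W}\to\M^V$.

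For the vanishing of $\M^{V+1-W}$, note that because $V \supseteq W$, the class $V+1-W\in RO(G)$ is represented by the genuine representation $(V-W)\oplus 1$, which contains a trivial one-dimensional summand. The remark immediately following Proposition~\ref{pr:a-group} then gives $H^{(V-W)\oplus 1}(\pt;\mZ)=0$, i.e.\ $\M^{V+1-W}=0$. Feeding both facts into the exact sequence forces $H^V(S(W))=0$. There is no real obstacle here: the only thing one has to be careful about is verifying that $V+1-W$ really is a positive representation so that Proposition~\ref{pr:a-group} applies, which is immediate from the hypothesis $V\supseteq W$. The whole argument is essentially a two-line consequence of the earlier structural results about $\M$.
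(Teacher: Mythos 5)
Your proof is correct and follows essentially the same route as the paper: the same long exact sequence from the cofiber sequence $S(W)_+\to S^0\xrightarrow{a_W}S^W$, surjectivity of $\cdot a_W\colon \M^{V-W}\to\M^V$ via $a_V=a_W\cdot a_{V-W}$ together with Proposition~\ref{pr:a-group}, and vanishing of $\M^{V+1-W}$ because $(V-W)\oplus 1$ contains a trivial summand. The only cosmetic difference is that you invoke Proposition~\ref{pr:euler-mult} for the surjectivity step where the paper just uses multiplicativity of Euler classes; both are fine.
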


\begin{proof}
The piece of the relevant long exact sequence  is
    \[ \xymatrixcolsep{1.5pc}\xymatrix{ 
\cdots & \M^{V+1}\ar[l] & \M^{V+1-W}\ar[l]_{a_W} & H^V(S(W)) \ar[l] & \M^V \ar[l] & \M^{V-W} \ar[l]_{a_W} & \cdots \ar[l] 
}\]
But $\M^V$ is generated by $a_V$, and $a_V=a_W\cdot a_{V-W}$.  So the map $\M^{V-W}\ra \M^V$ is surjective.  On the other side, since $V+1-W=(V-W)+1$ is a representation containing $1$ we know from Proposition~\ref{pr:a-group} that $\M^{V-W+1}=0$.  So $H^V(S(W))=0$ by the long exact sequence.
\end{proof}

We will be particularly interested in the group $H^{W-1}(S(W))$, and for this we have
\[
\xymatrix{
\M^W  & \M^0 \ar[l]_{a_W}  & H^{W-1}(S(W)) \ar[l] & \M^{W-1} \ar[l] & \M^{-1}\ar[l] \ar@{=}[d] \\
\Z/d(W)\ar[u]^\iso & \Z\ar@{->>}[l] \ar[u]^\iso &&&0.
}
\]
So we get a short exact sequence $0\lla \Z \lla H^{W-1}(S(W)) \lla \M^{W-1} \lla 0$, which is split but not naturally.

If $W\supseteq 1$ then $\M^W=0$, and we define a \dfn{fundamental class} for $S(W)$ to be an element of $H^{W-1}(S(W))$ that maps to a generator in $\M^0$.  If $W\supseteq 2$ then $\M^{W-1}=0$ and there are only two such classes, but in general there will be $2\cdot d(W-1)$ of them.
Note that applying $\psi$ to a fundamental class gives a fundamental class in singular cohomology.  

In the case $W\not\supseteq 1$ then the kernel of $\M^0\ra \M^W$ is still a copy of $\Z$, but it is generated by $d(W)\cdot 1$.  We define a \dfn{semi-fundamental class} to be an element of $H^{W-1}(S(W))$ that maps to a generator of this kernel.  Applying $\psi$ to a semi-fundamental class gives $\pm d(W)\cdot 1$ in singular cohomology.

\section{Cohomology relations on configuration spaces}
\label{se:gen-rels}

In this section we return to the configuration spaces $\OC_k(V)$, where $V$ is an orthogonal $G$-representation.  Note that $\OC_k(V)^{H}\iso \OC_k(V^H)$ for all subgroups
$H\subseteq G$.  A configuration of points $(x_1,\ldots,x_k)$ will
often be denoted $\und{x}$ for short.  

For each $i,j\leq k$ recall the map $\tilde{\omega}_{ij}\colon \OC_k(V)\ra S(V)$ given by
\[ \tilde{\omega}_{ij}(\und{x})=\dfrac{x_i-x_j}{||x_i-x_j||}.
\]
Observe that $\tilde{\omega}_{ij}$ and $\tilde{\omega}_{ji}$ differ by post-composition with the antipodal map on $S(V)$.  

For the rest of this section we assume $V\supseteq 1$.  In this case $S(V)$ is isomorphic to $S^{V-1}$.  So $\tH^{V-1}(S(V))\iso \Z$.  We fix once and for all a choice of generator $\iota_{V-1}\in \tH^{V-1}(S(V))$ (we will sometimes drop the subscript).  Define
$\omega_{ij}=\tilde{\omega}_{ij}^*(\iota_{V-1})$. We prove the following properties of the classes $\omega_{ij}$:

\begin{prop}
\label{pr:relations-1}
$\omega_{ij}^2=a_{V-1}\omega_{ij}=\omega_{ij}a_{V-1}$ and $\omega_{ji}=(-1)^{|V|}\omega_{ij}+a_{V-1}$.
\end{prop}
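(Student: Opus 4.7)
The plan is to derive both relations by pulling back identities that already hold in the cohomology of the sphere $S(V)\cong S^{V-1}$, using the structure map $\tilde\omega_{ij}\colon \OC_k(V)\to S(V)$.

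First I would handle the quadratic relation. Since $V\supseteq 1$, we have $S(V)\iso S^{V-1}$, and Proposition~\ref{pr:H-S^W} tells us that in $H^\star(S^{V-1};\und{\Z})$ the class $\iota_{V-1}$ satisfies $\iota_{V-1}^2=a_{V-1}\iota_{V-1}$. Applying $\tilde\omega_{ij}^*$, which is a ring map, and using that $\tilde\omega_{ij}^*(a_{V-1})=a_{V-1}$ (the Euler class pulls back to itself because it comes from $\M$ via the unit map), I obtain $\omega_{ij}^2=a_{V-1}\omega_{ij}$ immediately. The equality $a_{V-1}\omega_{ij}=\omega_{ij}a_{V-1}$ is then Corollary~\ref{co:a-commute}, since Euler classes are strictly central.

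For the skew-symmetry relation, I would note that by definition $\tilde\omega_{ji}=A\circ \tilde\omega_{ij}$, where $A\colon S(V)\to S(V)$ is the antipodal map. Proposition~\ref{pr:antipodal} (applied with $W=V-1$) gives
\[ A^*(\iota_{V-1})=(-1)^{|V-1|+1}\iota_{V-1}+a_{V-1}=(-1)^{|V|}\iota_{V-1}+a_{V-1}. \]
Pulling back under $\tilde\omega_{ij}^*$ and again using $\tilde\omega_{ij}^*(a_{V-1})=a_{V-1}$, this yields
\[ \omega_{ji}=\tilde\omega_{ij}^*A^*(\iota_{V-1})=(-1)^{|V|}\omega_{ij}+a_{V-1}, \]
as desired.

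I do not expect any serious obstacle here: everything reduces to the computations already done for $S^W$ in Proposition~\ref{pr:H-S^W} and Proposition~\ref{pr:antipodal}. The only subtlety is bookkeeping with the sign, where one has to remember that $|V-1|+1=|V|$, so the parity appearing in Proposition~\ref{pr:antipodal} matches the parity of $|V|$ stated in the claim.
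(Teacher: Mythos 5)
Your proposal is correct and follows essentially the same route as the paper: the quadratic relation is pulled back from Proposition~\ref{pr:H-S^W} via the ring map $\tilde\omega_{ij}^*$ (with centrality of $a_{V-1}$ from Corollary~\ref{co:a-commute}), and the skew-symmetry relation comes from $\tilde\omega_{ji}=A\circ\tilde\omega_{ij}$ together with Proposition~\ref{pr:antipodal}. The sign bookkeeping $(-1)^{|V-1|+1}=(-1)^{|V|}$ is exactly as in the paper.
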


\begin{proof}
The first relation follows at once from Proposition~\ref{pr:H-S^W}, and then the second from Corollary~\ref{co:a-commute}.   For the third we use the formula $\tilde{\omega}_{ji}=A\circ\tilde{\omega}_{ij}$ where $A$ is the antipodal map on $S(V)$.  Then
\[ \omega_{ji}=\tilde{\omega}_{ji}^*(\iota)=\tilde{\omega}_{ij}^*(A^*\iota)= \tilde{\omega}_{ij}^*((-1)^{|V|}\iota+a_{V-1})=(-1)^{|V|}\omega_{ij}+a_{V-1}
\]
where in the third equality we used Proposition~\ref{pr:antipodal}.
\end{proof}

\begin{cor}
\label{co:omega*omega}
$\omega_{ij}\cdot \omega_{ji}=0$.
\end{cor}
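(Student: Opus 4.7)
The plan is to derive the corollary as a direct algebraic consequence of the two relations established in Proposition~\ref{pr:relations-1}, together with the $2$-torsion fact for Euler classes of odd-dimensional representations (Proposition~\ref{pr:a_odd}). There is no need to introduce any further geometric input.

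First I would substitute the second relation of Proposition~\ref{pr:relations-1} for $\omega_{ji}$ and expand:
\[
\omega_{ij}\cdot \omega_{ji} = \omega_{ij}\bigl((-1)^{|V|}\omega_{ij}+a_{V-1}\bigr)
= (-1)^{|V|}\omega_{ij}^2 + \omega_{ij}a_{V-1}.
\]
Next I would apply the first relation $\omega_{ij}^2=a_{V-1}\omega_{ij}$ and the centrality statement $\omega_{ij}a_{V-1}=a_{V-1}\omega_{ij}$ from the same proposition (itself a consequence of Corollary~\ref{co:a-commute}) to simplify the right-hand side to $\bigl((-1)^{|V|}+1\bigr)\,a_{V-1}\omega_{ij}$.

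Finally I would split on the parity of $|V|$. If $|V|$ is odd, then the scalar $(-1)^{|V|}+1$ is $0$ and we are done immediately. If $|V|$ is even, then $V-1$ is odd-dimensional, so Proposition~\ref{pr:a_odd} gives $2\,a_{V-1}=0$, which kills the coefficient $2=(-1)^{|V|}+1$ appearing in front of $a_{V-1}\omega_{ij}$. Either way the product is zero.

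There is no real obstacle here; the step most worth double-checking is the sign/parity bookkeeping, in particular that the two cases are handled by the two different mechanisms (vanishing of the integer coefficient versus $2$-torsion of $a_{V-1}$), and that the use of $\omega_{ij}a_{V-1}=a_{V-1}\omega_{ij}$ is justified before substituting $\omega_{ij}^2=a_{V-1}\omega_{ij}$.
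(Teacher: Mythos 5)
Your proposal is correct and is essentially the paper's own proof: the same substitution of $\omega_{ji}=(-1)^{|V|}\omega_{ij}+a_{V-1}$, the same use of $\omega_{ij}^2=a_{V-1}\omega_{ij}=\omega_{ij}a_{V-1}$, and the same appeal to Proposition~\ref{pr:a_odd}. The only cosmetic difference is that you split into cases on the parity of $|V|$, whereas the paper phrases the conclusion in one stroke by noting that the coefficient $1-(-1)^{|V|-1}$ annihilates $a_{V-1}$.
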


\begin{proof}
We just compute
\begin{align*} \omega_{ij}\omega_{ji}=\omega_{ij}\bigl ( (-1)^{|V|}\omega_{ij}+a_{V-1}\bigr )&=(-1)^{|V|}\omega_{ij}^2+\omega_{ij}a_{V-1}\\
&=(-1)^{|V|}\omega_{ij}a_{V-1}+\omega_{ij}a_{V-1}\\
&=(1-(-1)^{|V|-1})\cdot \omega_{ij}a_{V-1}.
\end{align*}
Now use that $1-(-1)^{|V|-1}$ annihilates $a_{V-1}$, by Proposition~\ref{pr:a_odd}.
\end{proof}

There is an alternative proof of the above identity that is more geometric---see Remark~\ref{re:secondproof} for details.

We next work towards establishing the Arnold relation.  The following seems to be the most ``geometric'' form of the relation:

\begin{prop}
\label{pr:arnold-1}
For any triple of indices $i$, $j$, $k$ the relation
\[ (\omega_{ij}-\omega_{ik}) \cdot (\omega_{ji}-\omega_{jk})=0
\]
holds in $H^\star(\OC_k(V);\mZ)$.
\end{prop}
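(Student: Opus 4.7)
The plan is to realize each of the two factors as the image of a class in a relative cohomology group supported on a closed subspace of $\OC_3(V)$, exhibit the two supports as disjoint, and then invoke the general principle that the cup product of classes supported on disjoint closed sets vanishes via the relative cup product pairing.

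By the naturality of $H^\star$ under the forgetful projection $\OC_k(V)\to \OC_3(V)$, $(x_1,\ldots,x_k)\mapsto (x_i,x_j,x_k)$, it is enough to handle $k=3$. Introduce the closed $G$-invariant subspaces
\[ B_1=\{\und x\in \OC_3(V) : x_i\text{ lies on the open segment }(x_j,x_k)\},\qquad
   B_2=\{\und x\in \OC_3(V) : x_j\text{ lies on the open segment }(x_i,x_k)\}.\]
The straight-line homotopy $H(\und x,t)=x_i-tx_j-(1-t)x_k$ is $G$-equivariant, and its vanishing locus inside $\OC_3(V)\times I$ projects to $B_1$ (with $t$ playing the role of the segment parameter). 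Therefore $H$ is nonzero on $(\OC_3(V)\setminus B_1)\times I$, and normalization gives a $G$-equivariant homotopy between $\tilde{\omega}_{ik}$ (at $t=0$) and $\tilde{\omega}_{ij}$ (at $t=1$) over $\OC_3(V)\setminus B_1$. It follows that $\omega_{ij}-\omega_{ik}$ restricts to $0$ on $\OC_3(V)\setminus B_1$, and by the long exact sequence of the pair lifts to a class $\alpha_1\in H^\star(\OC_3(V),\OC_3(V)\setminus B_1;\mZ)$. Interchanging $i$ and $j$ in the construction lifts $\omega_{ji}-\omega_{jk}$ to a class $\alpha_2\in H^\star(\OC_3(V),\OC_3(V)\setminus B_2;\mZ)$.

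The heart of the argument is that $B_1\cap B_2=\emptyset$: simultaneous equations $x_i=tx_j+(1-t)x_k$ and $x_j=sx_i+(1-s)x_k$ with $t,s\in(0,1)$ substitute to give $(1-ts)x_i=(1-ts)x_k$; since $ts<1$, this forces $x_i=x_k$, contradicting $\und x\in \OC_3(V)$. Hence $(\OC_3(V)\setminus B_1)\cup (\OC_3(V)\setminus B_2)=\OC_3(V)$, and the relative cup product places $\alpha_1\alpha_2$ in
\[ H^\star\bigl(\OC_3(V),\ (\OC_3(V)\setminus B_1)\cup(\OC_3(V)\setminus B_2);\ \mZ\bigr)=H^\star(\OC_3(V),\OC_3(V);\mZ)=0. \]
Since the relative pairing is compatible with the absolute cup product under the forgetful maps to $H^\star(\OC_3(V);\mZ)$, the desired identity $(\omega_{ij}-\omega_{ik})(\omega_{ji}-\omega_{jk})=0$ follows.

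The main technical concern is to check that the relative cup-product pairing and the naturality of relative cohomology behave in $RO(G)$-graded Bredon cohomology exactly as they do classically; this is a formal consequence of the ring-spectrum structure on $H\mZ$ and the cofiber-sequence description of relative cohomology, so I expect no genuine difficulty here.
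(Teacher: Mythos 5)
Your proof is correct and takes essentially the same approach as the paper: cover $\OC_3(V)$ by two (here open) subsets on each of which one factor dies via a straight-line homotopy, lift each factor to relative cohomology, and kill the product in $H^\star(X,X)=0$ via the relative cup product. In fact your choice of supports $B_1,B_2$ is exactly the variant recorded in the paper's remark immediately after its proof (the ``ray''/segment formulation), as opposed to the hyperplane-side sets $U_\pm$ used in the proof itself.
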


Before giving the proof let us observe that the relation can be rewritten as follows: 

\begin{cor}
\label{co:arnold-2}
For any triple $i$, $j$, $k$ the relation
\[ \omega_{ij}\omega_{jk}+\omega_{jk}\omega_{ki}+\omega_{ki}\omega_{ij}=a_{V-1} (\omega_{ij}+\omega_{jk}+\omega_{ki})-a_{V-1}^2
\]
holds in $H^\star(\OC_k(V);\mZ)$.
\end{cor}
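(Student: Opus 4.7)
The plan is to deduce the stated identity directly from Proposition~\ref{pr:arnold-1} by purely algebraic manipulation, using the identities already established in Proposition~\ref{pr:relations-1}, Corollary~\ref{co:omega*omega}, Corollary~\ref{co:a-commute}, and the skew-commutativity rule~(\ref{eq:skew-commute}). To keep notation manageable I would write $a = a_{V-1}$ and $\epsilon = (-1)^{|V|}$.

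First, I would expand the product $(\omega_{ij}-\omega_{ik})(\omega_{ji}-\omega_{jk})=0$ and use $\omega_{ij}\omega_{ji}=0$ from Corollary~\ref{co:omega*omega} to reduce Proposition~\ref{pr:arnold-1} to the equation $\omega_{ij}\omega_{jk} + \omega_{ik}\omega_{ji} = \omega_{ik}\omega_{jk}$. Next, I would substitute the formulas $\omega_{ik}=\epsilon\omega_{ki}+a$ and $\omega_{ji}=\epsilon\omega_{ij}+a$ coming from Proposition~\ref{pr:relations-1} (applied with appropriately relabeled indices), using Corollary~\ref{co:a-commute} to slide $a$ freely past the other classes. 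Then I would apply the skew-commutativity rule~(\ref{eq:skew-commute}) to convert $\omega_{ki}\omega_{jk}$ into a multiple of $\omega_{jk}\omega_{ki}$; the resulting sign is $(-1)^{(|V|-1)^2}=-\epsilon$, since $(|V|-1)^{2}\equiv |V|-1\pmod 2$.

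After collecting the cubic monomials $\omega_{ij}\omega_{jk}+\omega_{jk}\omega_{ki}+\omega_{ki}\omega_{ij}$ on the left, one arrives at
\[ \omega_{ij}\omega_{jk} + \omega_{jk}\omega_{ki} + \omega_{ki}\omega_{ij} = a\omega_{jk} - \epsilon a\omega_{ki} - \epsilon a\omega_{ij} - a^{2}.\]
The closing step is the observation that $(1+\epsilon)a_{V-1}=0$: if $|V|$ is odd this is trivial since $\epsilon=-1$, while if $|V|$ is even then $|V-1|$ is odd and Proposition~\ref{pr:a_odd} gives $2 a_{V-1}=0$. Hence $-\epsilon a = a$, converting the right-hand side into $a_{V-1}(\omega_{ij}+\omega_{jk}+\omega_{ki})-a_{V-1}^{2}$.

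The only place where genuine care is required is the sign bookkeeping in the skew-commutation and the two substitutions; the identity $(1+\epsilon)a_{V-1}=0$ is precisely what arranges for the parity of $|V|$ to drop out of the final answer, so no case split on $|V|$ is needed.
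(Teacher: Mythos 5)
Your proposal is correct, and it follows essentially the same route as the paper's proof: expand the identity of Proposition~\ref{pr:arnold-1}, kill the $\omega_{ij}\omega_{ji}$ term via Corollary~\ref{co:omega*omega}, substitute the antipodal relation from Proposition~\ref{pr:relations-1}, skew-commute with sign $(-1)^{(|V|-1)^2}=(-1)^{|V|-1}$, and finish with $(1+(-1)^{|V|})a_{V-1}=0$ from Proposition~\ref{pr:a_odd}. The only difference is cosmetic (you perform both substitutions before skew-commuting, while the paper interleaves them), and your sign bookkeeping, including the intermediate identity $\omega_{ij}\omega_{jk}+\omega_{jk}\omega_{ki}+\omega_{ki}\omega_{ij}=a\omega_{jk}-\epsilon a\omega_{ki}-\epsilon a\omega_{ij}-a^2$, checks out.
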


\begin{proof}
Multiply out the relation from Proposition~\ref{pr:arnold-1} using distributivity and use that $\omega_{ij}\omega_{ji}=0$ from Corollary~\ref{co:omega*omega}.  This yields
\begin{align*}
    0&=-\omega_{ij}\omega_{jk}-\omega_{ik}\omega_{ji}+\omega_{ik}\omega_{jk} \\
    &= -\omega_{ij}\omega_{jk}-((-1)^{|V|}\omega_{ki}+a_{V-1})((-1)^{|V|}\omega_{ij}+a_{V-1}) +(-1)^{|V|-1}\omega_{jk}\omega_{ik}\\
    &= -\omega_{ij}\omega_{jk}-\omega_{ki}\omega_{ij}-a_{V-1}((-1)^{|V|}\omega_{ij}+(-1)^{|V|}\omega_{ki}+a_{V-1})+(-1)^{|V|-1}\omega_{jk}\omega_{ik}\\
    &= -\omega_{ij}\omega_{jk}-\omega_{ki}\omega_{ij}+a_{V-1}(\omega_{ij}+\omega_{ki}-a_{V-1})+(-1)^{|V|-1}\omega_{jk}\omega_{ik}.
\end{align*}
We have used Proposition~\ref{pr:relations-1} in the second equality, and in the fourth we have used that $a_{V-1}=(-1)^{|V|-1}a_{V-1}$ by Proposition~\ref{pr:a_odd}.  We are also using in several places that $a_{V-1}$ is central (Corollary~\ref{co:a-commute}).
Next we use Proposition~\ref{pr:relations-1} again to replace $\omega_{ik}$ with $(-1)^{|V|}\omega_{ki}+a_{V-1}$, and this gives us
\begin{align*}
0=
-\omega_{ij}\omega_{jk}-\omega_{ki}\omega_{ij}-\omega_{jk}\omega_{ki}+a_{V-1}(\omega_{ij}+\omega_{ki}+\omega_{jk}-a_{V-1}).
\end{align*}

\end{proof}

\begin{remark}
The natural $\Sigma_3$-action on $\OC_3(V)$ induces an action on the cohomology ring $H^\star(\OC_3(V);\mZ)$.
In the nonequivariant setting, the Arnold relation is readily checked to be the unique quadratic relation that is invariant under the $\Sigma_3$-action (up to multiplication by scalars, of course).  The Serre spectral sequence for the fibration $\OC_3(V)\ra \OC_2(V)$ (forget the last point in the configuration) shows there must be a nontrivial quadratic relation, and then equivariance says there is only one possibility.   This is a way to derive the Arnold relation without doing any geometry at all.  

However, in our equivariant setting the Arnold relation from Corollary~\ref{co:arnold-2} is {\it not\/} the unique quadratic relation that is equivariant: removing the constant term $a_{V-1}^2$ yields another relation with the same equivariance properties. 
\end{remark}

\begin{proof}[Proof of Proposition~\ref{pr:arnold-1}]
For $a,b\in V$ with $a\neq b$ write $u(a,b)$ for the unit vector pointing from $b$ towards $a$, i.e. $u(a,b)=\frac{a-b}{||a-b||}$.  For an ordered configuration $\und{x}=(x_1,\ldots,x_k)$ write $x_{ij}=u(x_i,x_j)$.  With this notation the cohomology class $\omega_{ij}$ may be regarded as the map $\OC_k(V)\ra AG(S(V))$ sending $\und{x}$ to $[x_{ij}]$.

For any indices $i$, $j$, $k$ there is a map $\OC_k(V)\ra \OC_3(V)$ that sends $\und{x}$ to the ordered triple $(x_i,x_j,x_k)$.  The class $\omega_{12}$ on $\OC_3(V)$ pulls back to $\omega_{ij}$ and so forth, so it suffices to prove the desired relation on $\OC_3(V)$ with $i=1$, $j=2$, and $k=3$.

Given an affine hyperplane $H$ in $V$, define a ``side'' of $H$ to be the union of $H$ with one of the two components of $V-H$.  So $H$ divides $V$ into two ``sides'', which are closed subsets whose intersection is $H$.
We will be concerned with two points being on the same side of $H$ or on opposite sides of $H$---but note that these notions are not negations of each other since one of our two points could be in $H$.

Given a configuration $\und{x}$, let $H_{rs}$ be the affine hyperplane in $V$ that passes through $x_r$ and is perpendicular to $x_r-x_s$.
Define $U_+\subseteq \OC_3(V)$ to be the subset of those configurations $\und{x}$ where $x_2$ and $x_3$ lie on the same side of $H_{12}$.  Likewise, define $U_-\subseteq \OC_3(V)$ to be the subset of those configurations $\und{x}$ where $x_2$ and $x_3$ lie on opposite sides of $H_{12}$.  Clearly $U_+\cup U_-=\OC_3(V)$.

We will prove that
\begin{itemize}
    \item 
$\omega_{12}-\omega_{13}$ is zero when restricted to  $U_+$, and therefore lifts to a class $\alpha\in H^\star(\OC_3(V),U_+)$
\item $\omega_{21}-\omega_{23}$ is zero when restricted to $U_-$ and therefore lifts to a class $\beta\in H^\star(\OC_3(V),U_-)$.
\end{itemize}
From these it follows that $(\omega_{12}-\omega_{13})(\omega_{21}-\omega_{23})$ lifts to the product $\alpha\beta\in H^\star(\OC_3(V),U_+\cup U_-)$, and is therefore zero.

The class $\omega_{12}-\omega_{13}$ is the map $\OC_k(V)\ra AG(S(V))$ that sends a configuration $\und{x}$ to $[x_{12}]-[x_{13}]$.  But if we restrict to $U_+$ then the line segment connecting $x_2$ to $x_3$ does not pass through $x_1$, and so we have the homotopy $H\colon U_+\times I \ra AG(S(V))$ given by
\[ H(\und{x},t)=[x_{12}]-[u(x_1,tx_2+(1-t)x_3)].
\]
Then $H_0=\omega_{12}-\omega_{13}$ and $H_1$ is the zero class. 

The proof for $\omega_{21}-\omega_{23}$ restricted to $U_-$ is very similar, but this time we move $x_3$ to $x_1$ via the straight line and use that it does not pass through $x_2$.
\end{proof}

\begin{remark}
Say that ``$ABC$ forms a ray'' if $C$ is on $\ray{AB}$ and is outside the interval $\overline{AB}$.
The above argument can also be written with $U_+$ the set of configurations $\und{x}$  such that $x_2x_1x_3$ is not a ray, and $U_-$
the set of configurations $x$ such that $x_1x_2x_3$ is not a ray.  
\end{remark}

\begin{remark}
\label{re:secondproof}
The above style of argument also gives an amusing alternative proof of Corollary~\ref{co:omega*omega}.  Fix a trivial subrepresentation $1\subseteq V$ and let $H$ be the orthogonal complement.  Let $U_+$ and $U_-$ be the subspaces of configurations $\und{x}$ where $x_{ij}$ is on one of the two sides of $H$.  We leave the remaining details to the reader.
\end{remark}

\subsection{Review of Stirling numbers}
\label{se:Stirling}

The unsigned Stirling number of the first kind $c(a,b)$ can be described in any of the following
ways:
\begin{enumerate}[(1)]
\item The number of permutations of $a$ that can be expressed as a
product of $b$ disjoint cycles (a fixed point counts as a cycle here).
\item The number of ways of placing $a$ distinguishable points on $b$
indistinguishable circles, with no circle left empty.
\item The absolute value of the coefficient of $x^b$ in
$x(x-1)(x-2)\cdots(x-a+1)$.
\item The sum of all $(a-b)$-fold products of distinct elements from
the set $\{1,2,\ldots,a-1\}$.  
\item The number of ways of selecting $a-b$ elements in the upper triangular part of an $a\times
a$ matrix so that no two are in the same column.
\item The number of permutations of $a$ that can be written as a
product of $a-b$ transpositions and no fewer.  
\end{enumerate}
Note that $c(a,a)=1$ and $c(a,a-1)=\binom{a}{2}$, and that $\sum_b
c(a,b)=a!$.  
Moreover, the Stirling numbers satisfy the recursion relation
$c(a+1,b)=a c(a,b)+c(a,b-1)$ which can be regarded as a variant of Pascal's identity
for binomial coefficients.  

A standard result says that the Stirling numbers are connected to the cohomology of configuration
spaces:

\begin{prop}
\label{pr:OC-classical}
The cohomology groups $H^*_{sing}(\OC_k(\R^n))$ are nonzero only in
degrees that are multiples of $n-1$, and 
$H^{i(n-1)}_{sing}(\OC_k(\R^n))\iso \Z^{c(k,k-i)}$.  Consequently, the
total rank of $H^*_{sing}(\OC_k(\R^n))$ is $k!$. 
\end{prop}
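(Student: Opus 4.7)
The plan is to induct on $k$, using the Fadell--Neuwirth fibration $p\colon \OC_k(\R^n) \to \OC_{k-1}(\R^n)$ that forgets the last point. The fiber over $(x_1,\dots,x_{k-1})$ is $\R^n\setminus\{x_1,\dots,x_{k-1}\}$, which deformation retracts onto a wedge $\bigvee_{i=1}^{k-1} S^{n-1}$ of $(n-1)$-spheres, one linking each missing point. The base case $k=1$ has $\OC_1(\R^n)=\R^n$ contractible, matching $c(1,1)=1$ and $c(1,b)=0$ otherwise.

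First I would set up the Serre spectral sequence
\[ E_2^{p,q} = H^p_{sing}\bigl(\OC_{k-1}(\R^n);\,H^q_{sing}(\text{fiber})\bigr) \Longrightarrow H^{p+q}_{sing}(\OC_k(\R^n)). \]
The fiber cohomology is $\Z$ in degree $0$ and $\Z^{k-1}$ in degree $n-1$. By the inductive hypothesis the base cohomology is free abelian and concentrated in columns $p$ divisible by $n-1$, so $E_2$ is supported in bidegrees $(p,q)$ with $q\in\{0,n-1\}$ and $(n-1)\mid p$. The only potentially nonzero differential is $d_n\colon E_n^{p,n-1}\to E_n^{p+n,0}$.

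The main obstacle is to show this $d_n$ vanishes, and also to justify the triviality of the local coefficient system needed for the simple $E_2$ description. The key observation is that the classes $\omega_{ik}\in H^{n-1}(\OC_k(\R^n))$ for $1\le i<k$ restrict, under any fiber inclusion, to a basis of $H^{n-1}(\text{fiber})$. Indeed, the restriction of $\tilde\omega_{ik}\colon\OC_k(\R^n)\to S^{n-1}$ to the fiber over $(x_1,\dots,x_{k-1})$ is the map $y\mapsto (x_i-y)/\|x_i-y\|$, which has degree $\pm 1$ on the wedge summand linking $x_i$ and is nullhomotopic on each of the other summands (since it extends over the disk filling in the omitted point). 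This single fact resolves both issues: the monodromy action of $\pi_1$ of the base must fix each restricted class $\omega_{ik}|_{\text{fiber}}$, so the local system is trivial; and the edge map $H^{n-1}(\OC_k(\R^n))\to H^{n-1}(\text{fiber})$ is surjective, so every class in the row $q=n-1$ is a permanent cycle, forcing $d_n=0$. Collapse at $E_2$ then follows.

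With collapse and freeness established, the rank of $H^{i(n-1)}_{sing}(\OC_k(\R^n))$ is
\[ c(k-1,\,k-1-i) + (k-1)\cdot c(k-1,\,k-i), \]
the two summands coming from $E_\infty^{i(n-1),0}$ and $E_\infty^{(i-1)(n-1),n-1}$ respectively. The Stirling recursion $c(k,k-i)=(k-1)c(k-1,k-i)+c(k-1,k-1-i)$ recorded in Section~\ref{se:Stirling} identifies this expression with $c(k,k-i)$, and summing $\sum_{i=0}^{k-1} c(k,k-i)=k!$ gives the total rank. Freeness follows since $E_2$ is free and the spectral sequence collapses.
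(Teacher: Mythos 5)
Your proof is correct, but it takes a genuinely different route from the paper's. The paper takes as given the classical presentation of $H^*_{sing}(\OC_k(\R^n))$ as the exterior algebra on the $\omega_{ij}$ modulo the Arnold relations (citing \cite{CLM}), and then derives the Betti numbers by a purely combinatorial straightening argument: the Arnold relation gives a normal form for monomials, and the count of normal-form monomials matches description (5) of the Stirling numbers. You instead recompute the additive structure from scratch via the Fadell--Neuwirth fibration and the Serre spectral sequence --- precisely the classical inductive approach that the introduction of the paper mentions (and explains cannot be imitated equivariantly). Your handling of the two standard pitfalls is right: surjectivity of $H^{n-1}(\OC_k(\R^n))\to H^{n-1}(F)$ (witnessed by the $\omega_{ik}|_F$, which you correctly identify as a basis since $\tilde\omega_{ik}$ is nullhomotopic on the wedge summands linking $x_j$ for $j\neq i$ and of degree $\pm1$ on the summand linking $x_i$) forces both triviality of the local system, since the image of restriction lies in the monodromy invariants, and vanishing of $d_n$ on the row $q=n-1$ by the derivation property; the Stirling recursion $c(k,k-i)=(k-1)c(k-1,k-i)+c(k-1,k-1-i)$ then closes the induction. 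What each approach buys: yours is self-contained at the additive level and needs no input about the ring structure, whereas the paper's monomial-counting argument is the one that gets reused verbatim for the equivariant algebra $R$ in Proposition~\ref{pr:OC-algebra}, where it produces an explicit free basis of monomials that the proof of Theorem~\ref{th:main1} later relies on; your spectral-sequence argument does not directly supply that basis. (Both arguments implicitly assume $n\geq 2$, as does the paper's statement.)
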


\begin{proof}
The classical computation is that $H^*_{sing}(\OC_k(\R^n))$ is the
quotient of the exterior algebra $\Lambda_\Z(\omega_{ij})_{1\leq
i<j\leq k}$ modulo the Arnold relations.  The Arnold relations allow
us to rewrite any product $\omega_{ij}\omega_{kl}$ in which the
maximal index that appears is repeated---for example,
$\omega_{24}\omega_{34}$---as a sum of monomials that do not have that
property.  Consequently, an additive basis is given by monomials of
the form
\begin{equation}
\label{eq:basis}
 \omega_{1j_{11}}\cdots \omega_{1j_{1r_1}}\cdot \omega_{2j_{21}}\cdots
\omega_{2j_{r_2}} \cdots 
\end{equation}
with the following properties:
\begin{itemize}
\item $\omega_{ij}$ appears only when $i<j$, and appears at most once,
\item the set of indices $\{j_{a*}\}$ does not intersect the set of
indices $\{j_{b*}\}$, for all choices of $a$ and $b$.  
\end{itemize}
If we imagine a $k\times k$ matrix in which every row reads
$[1,2,3,\ldots,k]$, then the choice of $j_{a*}$ indices can be
obtained by circling entries of the $a$th row in the upper triangular
part of the matrix.  The second condition above is the condition that
no two circled entries are in the same column, so the number of such
monomials coincides with description (5) of the Stirling numbers.
\end{proof}

The equivariant analog of the above result is as follows:

\begin{prop}
\label{pr:OC-algebra}
Let $R$ be the quotient of the free skew-commutative $\M$-algebra generated by classes $\omega_{ij}$ for $1\leq i\neq j\leq k$ subject to the relations
\begin{itemize} 
\item $\omega_{ij}=(-1)^{|V|}\omega_{ji}+a_{V-1}$,
\item $\omega_{ij}^2=a_{V-1}\omega_{ij}$,
\item $\omega_{ij}\omega_{jk}+\omega_{jk}\omega_{ki}+\omega_{ki}\omega_{ij}=a_{V-1} (\omega_{ij}+\omega_{jk}+\omega_{ki})-a_{V-1}^2$.
\end{itemize}
Then as a left $\M$-module $R$ is free with a basis consisting of $c(k,k-i)$ generators in degree $i(V-1)$, for all $i$.
\end{prop}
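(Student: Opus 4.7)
My plan is to prove Proposition~\ref{pr:OC-algebra} in two stages: (a) show that the standard basis monomials described in the proof of Proposition~\ref{pr:OC-classical} span $R$ as a left $\M$-module, and (b) establish linear independence by induction on $k$.

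For the spanning step, the argument parallels the non-equivariant version and proceeds by rewriting. Given any monomial in the $\omega_{ij}$'s, I would apply the following reductions: (i) the first relation, to replace each factor $\omega_{ji}$ with $j>i$ by the leading term $(-1)^{|V|}\omega_{ij}$ together with a shorter $a_{V-1}$-correction; (ii) skew-commutativity, built into the free skew-commutative $\M$-algebra structure of $R$, to reorder the factors into a canonical (say lexicographic) order; (iii) the second relation, to reduce each $\omega_{ij}^2$ to $a_{V-1}\omega_{ij}$; and (iv) the Arnold relation as stated in Corollary~\ref{co:arnold-2} (solved for one of its leading monomials), to eliminate any term in which two distinct factors share the same maximum index, at the cost of shorter corrections multiplied by $a_{V-1}$ or $a_{V-1}^2$. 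Equipping monomials with a well-founded order that first compares total length and then lex order guarantees termination, so every element of $R$ lies in the $\M$-span of the standard basis monomials.

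For independence the plan is to prove by induction on $k$ that $R_{k+1}$ is a free left $R_k$-module on the $k+1$ classes $\{1,\omega_{1,k+1},\omega_{2,k+1},\ldots,\omega_{k,k+1}\}$, with base case $R_1=\M$ (where $c(1,1)=1$). The Stirling recursion
\[
c(k+1,k+1-i) \;=\; k\cdot c(k,k-(i-1)) \;+\; c(k,k-i)
\]
matches the decomposition of normal forms in $R_{k+1}$ into those with a trailing factor $\omega_{j,k+1}$ (contributing the first summand) and those without (contributing the second), so this single inductive step simultaneously yields the rank count and the freeness claim.

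The main obstacle will be establishing the inductive freeness. Spanning of $R_{k+1}$ over $R_k$ by $\{1,\omega_{i,k+1}\}$ is another application of the Arnold reduction, pushing each $\omega_{\ast,k+1}$ to the right and restricting it to at most one factor. For linear independence I would construct an explicit model: a free left $R_k$-module $N=\bigoplus_{i=0}^{k} R_k\cdot e_i$ equipped with a prescribed right action of each generator $\omega_{ab}$ of $R_{k+1}$, defined by the normal-form algorithm, and then verify that the three defining relations of $R_{k+1}$ hold on $N$. The surjection $R_{k+1}\to N$, $x\mapsto x\cdot e_0$, combined with spanning, would then be forced to be an isomorphism of left $R_k$-modules. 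The delicate step is verifying the Arnold relations involving the new index $k+1$ on $N$; these reduce by the inductive hypothesis to Arnold relations among $\omega_{ij}$ with $i,j\le k$, modulo careful sign and $a_{V-1}$-bookkeeping whose non-equivariant shadow is already contained in Proposition~\ref{pr:OC-classical}.
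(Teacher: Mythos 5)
Your proposal is correct and, on the spanning step, follows exactly the route the paper takes: the paper's proof of Proposition~\ref{pr:OC-algebra} simply says the argument is ``essentially the same as the non-equivariant case,'' i.e.\ one uses the Arnold relation to rewrite any monomial with a repeated maximal index into the standard monomials of the proof of Proposition~\ref{pr:OC-classical}, and observes that the extra $a_{V-1}$-terms only contribute strictly smaller corrections. Where you genuinely go beyond the paper is in the linear-independence step. The paper does not address independence for the abstract algebra $R$ at all: in the non-equivariant model it is inherited from the classical computation of $H^*_{sing}(\OC_k(\R^n))$, but for the abstractly presented $\M$-algebra $R$ no such external input is available, so a self-contained argument such as your inductive construction of a free $R_k$-module $N=\bigoplus_i R_k\,e_i$ carrying an action of the generators (a PBW/diamond-lemma style representation) is exactly what is needed to make the proposition airtight; the Stirling recursion $c(k+1,k+1-i)=k\,c(k,k-(i-1))+c(k,k-i)$ matches your decomposition of normal forms, as you say. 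Two small cautions if you carry this out: (1) the Arnold rewriting replaces a bad quadratic term by two monomials of the \emph{same} length (plus genuinely shorter $a_{V-1}$-corrections), so termination rests on the lex comparison at fixed length, not on a length drop --- this works if you order factors by their sorted index pairs, but your phrase ``shorter corrections'' slightly misstates it; (2) the verification of the Arnold relations on $N$ is where all the signs and the torsion relations $(1-(-1)^{|V|-1})a_{V-1}=0$ from Proposition~\ref{pr:a_odd} must be used, so that part cannot be reduced purely to its non-equivariant shadow.
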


\begin{proof}
The proof is essentially the same as the non-equivariant case.  The basis consists of the same monomials in the $\omega_{ij}$ described in the proof of Proposition~\ref{pr:OC-classical}.
\end{proof}

\section{The homology of configuration spaces}
\label{se:homology}

In this section we study the (co)homology of ordered configurations of $k$ points in a $G$-representation $V$. Our computation will proceed roughly as follows. Observe
\[\OC_k(V)=V^k-\bigcup_{1\leq i < j \leq k} D_{i,j}\]
where $D_{i,j}$ is the subspace of $V^k$ where the $i$th and $j$th component are equal. We can inductively remove these subspaces, noting at each stage the resulting space (after smashing with $H\undZ$) is weakly equivalent to a wedge of certain suspensions of the unit sphere $S(V)$. The inductive step will rely on the vanishing of particular maps from the unit sphere, as seen in Theorem~\ref{th:splitting} below.

\subsection{\mdfn{$V$}-arrangements and a motivic splitting}

In this section we fix a finite-dimensional $G$-representation $V$. We  prove a splitting theorem for certain complements in $V^k$ of unions of subrepresentations.   The configuration space $\OC_k(V)$ will be an example of such a space, but we have need for the extra generality. We start by introducing some necessary conditions on the subrepresentations that are removed.

\begin{defn}
Fix $k\geq 1$ and let $H_1,\ldots,H_m\subseteq V^k$ be a collection of sub-representations.  We say this collection is a \mdfn{$V$-arrangement} if for every $r\geq 1$ and every set of indices $1\leq i_1,\ldots,i_r\leq m$, there is an isomorphism of representations $H_{i_1}\cap \cdots \cap H_{i_r}\iso V^s$ for some $s\geq k-r$.  
\end{defn}

Note that in a $V$-arrangement each $H_i$ will be either all of $V^k$ or else isomorphic to $V^{k-1}$.  The collection of all $D_{i,j}=\{\und{x}\in V^k \,|\, x_i=x_j\}$ is an example of a $V$-arrangement.   The empty collection of subspaces ($m=0$) is also an example of a $V$-arrangement.   Any subcollection of a $V$-arrangement is another $V$-arrangement. We also have need of the following intersection property:

\begin{prop}
\label{pr:V-arrange}
If $H_1,\ldots,H_m\subseteq V^k$ is a $V$-arrangement, then the collection $H_1\cap H_m,\ldots,H_{m-1}\cap H_m \subseteq H_m$ is also a $V$-arrangement.
\end{prop}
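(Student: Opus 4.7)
The plan is to unwind the definition of a $V$-arrangement and observe that the conclusion is essentially a bookkeeping consequence of the original arrangement condition, with the ambient dimension decreasing by one while the count of subspaces in the intersection increases by one.

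First I would dispose of the trivial case: if $H_m = V^k$, then $H_i \cap H_m = H_i$ for each $i$, and the collection $H_1,\ldots,H_{m-1}$ is already a $V$-arrangement since it is a subcollection of the given one. So assume $H_m \iso V^{k-1}$, and fix once and for all an isomorphism $H_m \iso V^{k'}$ with $k' = k-1$; this realizes $H_m$ as the ambient representation for the purported new $V$-arrangement.

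Next I would check the defining property. Given $r \geq 1$ and indices $1 \leq i_1,\ldots,i_r \leq m-1$, observe that
\[ (H_{i_1} \cap H_m) \cap \cdots \cap (H_{i_r} \cap H_m) = H_{i_1} \cap \cdots \cap H_{i_r} \cap H_m, \]
which is an intersection of $r+1$ members of the original $V$-arrangement inside $V^k$. By hypothesis this intersection is isomorphic to $V^s$ for some $s \geq k - (r+1) = k' - r$, which is exactly the bound needed for the new collection in the ambient space $H_m \iso V^{k'}$. The $r=1$ case of this same calculation also shows each individual $H_i \cap H_m$ is either all of $H_m$ or isomorphic to $V^{k'-1}$, matching the implicit requirement noted after the definition.

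There is essentially no obstacle here; the statement is a formal consequence of the definition. The one place to be careful is the implicit identification of $H_m$ with a representation of the form $V^{k'}$, which is fine because it is exactly what the $r=1$ clause of the arrangement property for $H_m$ alone provides. I would end the proof with a short remark that the same bookkeeping shows, more generally, that intersecting a $V$-arrangement with any one of its members produces a new $V$-arrangement on that member, which is the form in which the proposition will be used inductively in the subsequent cofiber-sequence arguments.
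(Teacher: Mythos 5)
Your proof is correct and follows essentially the same argument as the paper: split into the cases $H_m = V^k$ and $H_m \iso V^{k-1}$, and in the latter note that an $r$-fold intersection of the $H_i \cap H_m$ is an $(r+1)$-fold intersection of the original arrangement, hence isomorphic to $V^s$ with $s \geq k-(r+1) = (k-1)-r$. No issues.
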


\begin{proof}
There are two cases, depending on whether $H_m=V^k$ or $H_m\iso V^{k-1}$.  The first is trivial.  In the second case, an $r$-fold intersection of the $H_i\cap H_m$ is an $(r+1)$-fold intersection of the original $H_i$, and so is isomorphic to $V^s$ for some $s\geq k-(r+1)=(k-1)-r$.  So the necessary condition is satisfied.
\end{proof}

Now we come to the core result.  It involves an awkward vanishing condition which will be investigated in more detail in Section~\ref{se:vanishing-criterion}.  

\begin{thm}
\label{th:splitting}
Let $k\geq 1$, let $m\geq 1$, and let $H_1,\ldots,H_m\subseteq V^k$ be a $V$-arrangement. 
 Set $X=V^k-(H_1\cup \cdots \cup H_m)$.  
Suppose that for all $\ell\in \Z$ we have 
\[ \Bigl [S(V)_+,H\mZ\Smash \Sigma^{\ell(V-1)+V}\bigl( S(V)_+\bigr ) \Bigr]=0
\]
(note that this condition depends only on $V$).
Then $H\mZ\Smash X_+$ is equivalent (as an $H\mZ$-module) to a wedge of summands of the form $H\mZ\Smash \Sigma^{j(V-1)}\bigl(S(V)_+\bigr)$ for $j=0,1,\ldots,k-1$.  
\end{thm}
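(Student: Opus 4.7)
The plan is a double induction, outer on $k$ and inner on $m$, reducing at each step via a cofiber sequence produced by a tubular neighborhood. The base cases are immediate: for $k=1$ every $H_i$ is either $V$ or $\{0\}$, so $X$ is either $\emptyset$ or $V-\{0\}\simeq S(V)$, matching the claim; and for $m=1$ with arbitrary $k$, either $X=\emptyset$ or $H_1\iso V^{k-1}$, in which case $X=V^k-H_1$ equivariantly deformation retracts onto the sphere of the orthogonal complement $\iso V$, giving the $j=0$ summand.

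For the inductive step with $k,m\geq 2$, set $X_i=V^k-(H_1\cup\cdots\cup H_i)$ and $K=H_m\cap X_{m-1}$. Since $H_m$ has orthogonal complement $\iso V$ inside $V^k$, the normal bundle of $K\hookrightarrow X_{m-1}$ is equivariantly trivial with fiber $V$, and an equivariant tubular neighborhood yields the cofiber sequence
\[ (X_m)_+ \longrightarrow (X_{m-1})_+ \longrightarrow K_+\Smash S^V. \]
By Proposition~\ref{pr:V-arrange}, the collection $\{H_i\cap H_m\}_{i<m}$ is a $V$-arrangement in $H_m\iso V^{k-1}$, so the outer inductive hypothesis writes $H\mZ\Smash K_+$ as a wedge of $H\mZ\Smash\Sigma^{i(V-1)}(S(V)_+)$ for $i=0,\ldots,k-2$, and smashing with $S^V$ expresses $H\mZ\Smash K_+\Smash S^V$ as a wedge of $H\mZ\Smash\Sigma^{i(V-1)+V}(S(V)_+)$. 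The inner inductive hypothesis applied to $X_{m-1}$ gives $H\mZ\Smash (X_{m-1})_+$ as a wedge of $H\mZ\Smash\Sigma^{j(V-1)}(S(V)_+)$ for $j=0,\ldots,k-1$.

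The crucial step is to show the middle map $g$ of the cofiber sequence is null after smashing with $H\mZ$. Using the two wedge decompositions and the free-forgetful adjunction, the group of $H\mZ$-module maps between individual summands $H\mZ\Smash\Sigma^{j(V-1)}(S(V)_+)$ and $H\mZ\Smash\Sigma^{i(V-1)+V}(S(V)_+)$ is isomorphic to $[S(V)_+,H\mZ\Smash\Sigma^{(i-j)(V-1)+V}(S(V)_+)]$, which vanishes by the hypothesis with $\ell=i-j$. Hence $g$ itself is null. Since the cofiber of a null map $B\to C$ is $C\vee\Sigma B$ and the Puppe sequence identifies that cofiber with $\Sigma$ applied to $H\mZ\Smash (X_m)_+$, we conclude
\[ H\mZ\Smash (X_m)_+ \;\simeq\; H\mZ\Smash (X_{m-1})_+ \;\vee\; H\mZ\Smash \Sigma^{-1}(K_+\Smash S^V). \]
The second wedge factor is a wedge of $H\mZ\Smash\Sigma^{(i+1)(V-1)}(S(V)_+)$ for $i=0,\ldots,k-2$; reindexing $j=i+1$ produces $j=1,\ldots,k-1$, and combining with the $j=0,\ldots,k-1$ summands from $(X_{m-1})_+$ realizes $H\mZ\Smash (X_m)_+$ as a wedge of the desired form. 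The main obstacle is the null-homotopy of $g$, which is precisely the content of the vanishing hypothesis on $V$; everything else is standard bookkeeping with cofiber sequences and equivariant tubular neighborhoods, and the trivialization of the Thom space is free because the normal bundle of $H_m$ in $V^k$ is already a trivial linear $G$-representation bundle.
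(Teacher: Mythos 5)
Your proposal is correct and follows essentially the same route as the paper's proof: the same Pontryagin--Thom cofiber sequence $(X_m)_+\to (X_{m-1})_+\to K_+\Smash S^V$, the same reduction of the connecting map to components in $[S(V)_+,H\mZ\Smash\Sigma^{(i-j)(V-1)+V}(S(V)_+)]$ killed by the vanishing hypothesis, and the same splitting conclusion. The only differences are organizational (you run a double induction on $(k,m)$ where the paper inducts on $m$ alone, quantifying over all $k$) and the omitted but trivial case $H_m=V^k$ in the inductive step, where $X=\emptyset$.
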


\begin{proof}
We proceed by induction on $m$. When $m=1$ there are two cases: $H_1=V^k$ and $H_1\iso V^{k-1}$.  In the former $X=\emptyset$ and $H\mZ\Smash X_+ = *$, which is the empty wedge of the given summands.  In the latter case we have $X=V^k-H_1\iso V^{k-1}\times (V-0)\he S(V)$, and so $H\mZ\Smash X_+\he H\mZ\Smash S(V)_+$.  This completes the base case.

For the inductive step we fix $m\geq 2$ and assume the desired splitting holds for all $V$-arrangements of size $m-1$ or smaller. Let
 $H_1,\ldots,H_m\subseteq V^k$ be a $V$-arrangement.  If $H_m=V^k$ then $X=\emptyset$ and the result is trivial, so we may assume $H_m\neq V^k$.
Set $Y=V^k-(H_1\cup \dots \cup H_{m-1})$ and observe that $H\undZ \Smash Y_+$ decomposes as desired by the induction hypothesis. Let 
\[Z=H_m-((H_1\cap H_m) \cup \dots \cup (H_{m-1} \cap H_m)).\]
We can also apply the induction hypothesis to obtain a similar splitting for $H\mZ\Smash Z_+$, by Proposition~\ref{pr:V-arrange}.  Note that the splitting for $H\mZ\Smash Z_+$ will only have summands $H\mZ\Smash \Sigma^{j(V-1)}\bigl (S(V)_+\bigr )$ for $j\leq k-2$, since $H_m
\iso V^{k-1}$.

Note that $Z$ is a closed submanifold of $Y$.
Let $N_YZ$ denote the normal bundle of $Z$ in $Y$.  This bundle is trivial: it is the pullback
of the normal bundle of $H_m$ in $V^k$ under the inclusion $Z\hookrightarrow H_m$, and $N_{V^k}(H_m)$ is trivial because $H_m$ is a linear subspace of $V^k$. The fibers are isomorphic to the orthogonal complement of $H_m$ in $V^k$, which (up to isomorphism) is $V$.

Using the Pontryagin-Thom collapse map outside a tubular neighborhood of $Z$, we have a homotopy cofiber sequence
\[
Y-Z \hookrightarrow Y \to \Th(N_Y Z)
\]
where the rightmost term is the Thom space.  Note that $Z$ is not compact and so the tubular neighborhood theorem does not immediately apply, but it nevertheless holds in the case of a linear subspace like this.  
Since $N_Y Z$ is trivial we have $\Th(N_YZ)\iso \Sigma^V Z_+$.  

Smashing our cofiber sequence with $H\mZ$ yields the 
corresponding homotopy cofiber sequence of $H\mZ$-modules
\[ H\mZ\Smash (Y-Z)_+ \lra H\mZ\Smash Y_+ \lra H\mZ\Smash \Sigma^V (Z_+).
\]
Our plan is to show the right map is null, which will give us the equivalence
\[ H\mZ\Smash (Y-Z)_+\he (H\mZ\Smash Y_+) \Wedge \Sigma^{-1}(H\mZ\Smash \Sigma^V (Z_+)).
\]
This will complete the proof because $Y-Z=X$ and we have splittings for the two terms on the right by the induction hypothesis.  

The map we need to analyze is
\begin{equation}\label{eq:nullmap}
\begin{tikzcd}[sep=2mm]
H\undZ \Smash Y_+ \arrow[r] \arrow[d, dash,"\sim" style={rotate=90, inner sep=.5mm, anchor=north}] &H\undZ \Smash \text{Th}(N_YZ) \arrow[d, dash,"\sim" style={rotate=90, inner sep=.5mm, anchor=north}]\\
\bigvee_{j}H\undZ\Smash \Sigma^{j(V-1)}\bigl(S(V)_+\bigr) & H\undZ \Smash \Sigma^{V}(Z_+) \arrow[d, dash,"\sim" style={rotate=90, inner sep=.5mm, anchor=north}] \\
 & \Sigma^V \left(\bigvee_{i}H\undZ\Smash \Sigma^{i(V-1)}\bigl (S(V)_+\bigr)\right)
\end{tikzcd}
\end{equation}
where $0\leq j\leq k-1$ and $0\leq i\leq k-2$.  Note that in the wedges there might be multiple summands corresponding to each value of $j$ or $i$.  This map is null if and only if each component---corresponding to choosing individual summands in the domain and codomain---is null.
But these components lie in the groups
\begin{align*}
    &[H\undZ \Smash S^{j(V-1)}\Smash S(V)_+, \Sigma^V H \undZ \Smash S^{i(V-1)}\Smash S(V)_+]_{H\undZ} \\
    &\cong [S^{j(V-1)}\Smash S(V)_+, \Sigma^V H \undZ \Smash S^{i(V-1)}\Smash S(V)_+]\\
    &\cong [S(V)_+, H \undZ \Smash S^{(i-j)(V-1)+V}\Smash S(V)_+]
\end{align*}
and these groups vanish by assumption.  This 
completes the proof.
\end{proof}

\subsection{The vanishing criterion}
\label{se:vanishing-criterion}
We now investigate the vanishing hypothesis that appeared in Theorem~\ref{th:splitting}.  We will see that this follows from a more concrete condition that only involves the Bredon cohomology of a point, and that this is satisfied in many cases of interest. 

\begin{defn}
\label{de:vanishing}
Let $V$ be an orthogonal $G$-representation. We say $V$ satisfies the {\bf{vanishing requirement}} for $G$ if for all $\ell\in \Z$
\begin{itemize}
    \item $H^{\ell V-\ell}(\pt)\llra{\cdot a_V} H^{(\ell+1)V-\ell}(pt)$ is surjective, and
    \item $H^{(\ell+1) V-\ell}(\pt)\llra{\cdot a_V} H^{(\ell+2)V-\ell}(\pt)$ is injective.
\end{itemize}
\end{defn}
The terminology ``vanishing requirement'' comes from the following:

\begin{prop}\label{pr:vanish1}
    A representation $V$ satisfies the vanishing requirement if and only if $H^{\ell(V-1)+V}(S(V))=0$ for all $\ell \in \Z$.
\end{prop}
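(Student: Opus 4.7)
The plan is to read off both implications directly from the long exact sequence associated to the cofiber sequence $S(V)_+ \to S^0 \xrightarrow{a_V} S^V$ of (\ref{eq:a-cofiber}), which was already used in the excerpt to produce the long exact sequence
\[ \cdots \la \M^{\alpha+1} \xleftarrow{a_V} \M^{\alpha+1-V} \la H^\alpha(S(V)) \la \M^\alpha \xleftarrow{a_V} \M^{\alpha-V} \la \cdots\]
So no new geometry is needed; the whole proposition is a matter of matching indices.

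First I would specialize to $\alpha = \ell(V-1)+V = (\ell+1)V - \ell$. Exactness at the $H^\alpha(S(V))$ spot then says that this group vanishes exactly when the incoming map $\M^\alpha \to H^\alpha(S(V))$ is zero and the outgoing map $H^\alpha(S(V)) \to \M^{\alpha+1-V}$ is zero. Equivalently:
\begin{itemize}
\item $\M^{\alpha - V} \xrightarrow{a_V} \M^\alpha$ is surjective, and
\item $\M^{\alpha+1-V} \xrightarrow{a_V} \M^{\alpha+1}$ is injective.
\end{itemize}
Substituting $\alpha = (\ell+1)V - \ell$ gives $\alpha - V = \ell V - \ell$, $\alpha + 1 - V = \ell V - \ell + 1$, and $\alpha + 1 = (\ell+1)V - \ell + 1$. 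The surjectivity condition is then exactly the first bullet of Definition~\ref{de:vanishing}.

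The only small subtlety is that the injectivity condition reads ``$\M^{\ell V - \ell + 1} \xrightarrow{a_V} \M^{(\ell+1)V - \ell + 1}$ is injective for all $\ell \in \Z$,'' whereas the vanishing requirement as written asks for injectivity of $\M^{(\ell+1)V - \ell} \xrightarrow{a_V} \M^{(\ell+2)V - \ell}$ for all $\ell$. These are the same family of conditions after the reindexing $\ell \mapsto \ell - 1$: applying that shift to the second bullet of Definition~\ref{de:vanishing} yields precisely $\M^{\ell V - \ell + 1} \xrightarrow{a_V} \M^{(\ell+1)V - \ell + 1}$ injective for all $\ell$. Since both conditions are required to hold for all integers, the two families coincide.

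Combining, the vanishing requirement is equivalent to the simultaneous surjectivity and injectivity of the relevant $a_V$-multiplications at $\alpha = \ell(V-1)+V$ for every $\ell$, which by the long exact sequence is equivalent to $H^{\ell(V-1)+V}(S(V)) = 0$ for all $\ell$. There is no serious obstacle here; the only thing to get right is the index bookkeeping, and the proof is complete once the shift $\ell \mapsto \ell - 1$ is identified.
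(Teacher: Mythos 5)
Your proof is correct and is exactly the argument the paper intends: the paper's proof is a one-line appeal to the cofiber sequence $S(V)_+\to S^0\llra{a_V}S^V$, and you have simply written out the resulting long exact sequence and done the index bookkeeping (including the harmless shift $\ell\mapsto\ell-1$ in the injectivity condition) correctly.
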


\begin{proof}
This follows readily from the cofiber sequence $S(V)_+\to S^0\llra{a_V}S^V$.
\end{proof}

The vanishing requirement implies the hypothesis of Theorem~\ref{th:splitting}:

\begin{prop}\label{groupvanish1}
    Suppose $V$ satisfies the vanishing requirement. Then
    \[[S(V)_+, H\undZ \Smash \Sigma^{\ell (V-1)+V}\bigl(S(V)_+\bigr)]=0\]
for all $\ell\in \Z$.
\end{prop}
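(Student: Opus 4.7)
Set $\alpha=\ell(V-1)+V$. The claim is that $[S(V)_+,H\undZ\Smash\Sigma^\alpha S(V)_+]=0$ for each $\ell\in\Z$. My plan is to resolve the inner copy of $S(V)_+$ in the target using the very cofiber sequence $S(V)_+\to S^0\llra{a_V}S^V$ that drives the vanishing requirement. Smashing with $H\undZ$ and shifting by $\alpha$ produces a cofiber sequence of $H\undZ$-modules
\[H\undZ\Smash\Sigma^\alpha S(V)_+ \lra \Sigma^\alpha H\undZ \llra{a_V\cdot} \Sigma^{\alpha+V}H\undZ,\]
and I will then apply $[S(V)_+,-]$ to the associated Puppe sequence.

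The output is a long exact sequence whose relevant window reads
\[H^{\alpha+V-1}(S(V);\undZ) \lra [S(V)_+,H\undZ\Smash\Sigma^\alpha S(V)_+] \lra H^{\alpha}(S(V);\undZ) \llra{a_V\cdot} H^{\alpha+V}(S(V);\undZ),\]
so it suffices to show that both flanking cohomology groups vanish. This is where Proposition~\ref{pr:vanish1} enters: it identifies the vanishing requirement with the statement that $H^{m(V-1)+V}(S(V);\undZ)=0$ for every $m\in\Z$. Taking $m=\ell$ gives $H^\alpha(S(V);\undZ)=0$ directly. For the other group I will use the arithmetic identity
\[\alpha+V-1 \;=\; \ell(V-1)+2V-1 \;=\; (\ell+1)(V-1)+V,\]
so taking $m=\ell+1$ in the same proposition kills $H^{\alpha+V-1}(S(V);\undZ)$ as well. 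Exactness then pins $[S(V)_+,H\undZ\Smash\Sigma^\alpha S(V)_+]$ between two zero groups, so it must vanish.

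There is no real obstacle here — the whole argument rests on the small arithmetic observation $\alpha+V-1=(\ell+1)(V-1)+V$, which calibrates the vanishing requirement at two consecutive indices $\ell$ and $\ell+1$ precisely so as to squeeze out the group in question.
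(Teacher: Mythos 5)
Your proposal is correct and follows essentially the same route as the paper: smash the cofiber sequence $S(V)_+\to S^0\llra{a_V}S^V$ with $H\undZ\Smash S^{\ell(V-1)+V}$, apply $[S(V)_+,-]$, and observe that the two flanking groups are exactly the groups $H^{m(V-1)+V}(S(V))$ at $m=\ell$ and $m=\ell+1$, which vanish by Proposition~\ref{pr:vanish1}. The arithmetic identity $\alpha+V-1=(\ell+1)(V-1)+V$ you highlight is precisely the calibration implicit in the paper's proof.
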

\begin{proof}
    We use the usual cofiber sequence $S(V)_+\longrightarrow S^0\overset{a_V}{\longrightarrow} S^V$ of (\ref{eq:a-cofiber}) and then smash with $H\undZ \Smash S^{\ell (V -1) +V} $ to get the cofiber sequence
    \[
    H\undZ \Smash \Sigma^{\ell (V -1) +V} \bigl(S(V)_+\bigr)\longrightarrow H\undZ \Smash S^{\ell (V -1) +V}\overset{a_V}{\longrightarrow} H\undZ \Smash S^{\ell (V -1) +2V}.
    \]
    Applying $[S(V)_+,-]$ then yields a long exact sequence
    \[
    H^{\ell (V -1) +2V-1}(S(V))\to \bigl [S(V)_+, H\undZ \Smash \Sigma^{\ell (V -1) +V} \bigl( S(V)_+\bigr) \bigr ]\longrightarrow H^{\ell (V -1) +V}(S(V)).
    \]
    Since $V$ satisfies the vanishing condition, the left and right groups are zero, and thus the middle group must be zero.
\end{proof}

Which representations satisfy the vanishing requirement?  Here is the first class of examples:

\begin{prop}
\label{pr:vanishing-case1}
If $V\supseteq 1$ and $V\neq 2$ then $V$ satisfies the vanishing requirement.
\end{prop}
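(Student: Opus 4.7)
My plan is to use the fact that $a_V = a_1\cdot a_{V-1} = 0$ whenever $V\supseteq 1$, which collapses both halves of the vanishing requirement into a single statement about point cohomology. With $a_V = 0$, both maps in Definition~\ref{de:vanishing} are zero; surjectivity then forces the target to vanish and injectivity forces the source to vanish---and these amount to the single condition $H^{(\ell+1)V-\ell}(\pt) = 0$ for every $\ell\in\Z$. Rewriting the grading as $(\ell+1)V-\ell = (\ell+1)(V-1)+1$ and setting $k = \ell+1$, the task becomes showing $H^{k(V-1)+1}(\pt) = 0$ for all $k\in\Z$.

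I would then split on the sign of $k$. For $k\geq 1$, the element $k(V-1)+1$ is a positive $G$-representation containing the trivial subrepresentation $1$, so Proposition~\ref{pr:a-group} yields vanishing, since the gcd defining the order of $H^{k(V-1)+1}(\pt)$ already includes $\#(G/G)=1$. The case $k=0$ reduces to the standard vanishing $H^1(\pt;\mZ) = 0$. For $k\leq -1$, rewrite the grading as $1 - |k|(V-1)$ and apply Proposition~\ref{pr:negative-W}: this yields vanishing whenever $|k|(V-1)\neq 1$.

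The only real obstacle is ensuring this last subcase always applies, and this is precisely where the hypothesis $V\neq 2$ is used. The equation $|k|(V-1) = 1$ forces $|k|=1$ and $V-1 = 1$, i.e.\ $V = 2$---exactly the excluded case. The edge case $V = 1$ makes $V-1 = 0$, so $1 - |k|(V-1) = 1$ and the calculation reduces to $H^1(\pt;\mZ) = 0$ already handled above. As a sanity check on the grading computations one can instead work through Proposition~\ref{pr:vanish1}: use the identification $S(V)\iso S^{V-1}$ and apply Proposition~\ref{pr:H-S^W} to split $H^{\ell(V-1)+V}(S(V))$ as a direct sum of two copies of $H^{k(V-1)+1}(\pt)$-type groups, landing on the identical vanishing condition.
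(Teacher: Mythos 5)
Your proposal is correct and follows essentially the same route as the paper: use $a_V=0$ to reduce the vanishing requirement to $H^{(\ell+1)V-\ell}(\pt)=H^{(\ell+1)(V-1)+1}(\pt)=0$, then invoke Proposition~\ref{pr:a-group} when $\ell+1\geq 0$ and Proposition~\ref{pr:negative-W} when $\ell+1<0$, with $V\neq 2$ entering exactly to rule out $V-1=1$. The edge-case bookkeeping (including $V=1$) is handled correctly.
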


\begin{proof}
Since $V\supseteq 1$ we have $a_V=0$, so the vanishing requirement is equivalent to the statement that $H^{(\ell+1)V-\ell}(\pt)=0$ for all $\ell\in \Z$.  If we set $W=V-1$ then this says $H^{1+(\ell+1)W}(\pt)=0$ for all $\ell\in \Z$.  When $\ell\geq -1$ this is by Proposition~\ref{pr:a-group}.  When $\ell\leq -2$ this is by Proposition~\ref{pr:negative-W}, using that $W\neq 1$.  
\end{proof}

Verifying the vanishing requirement for representations not containing $1$ seems to be more difficult,  as it requires computing significant portions of the ring $\M$. Such computations are sparse in the literature and have only been done for a few families of groups. In the examples that we do know, the vanishing requirement is always satisfied as long as $\dim(V)\geq 3$. We summarize this here:

\begin{prop}
\label{pr:VR-examples}
Let $p$ be a prime.  
Let $G$ be one of the groups $C_p$, $C_{p^2}$, or $\Sigma_3$.
Then any orthogonal $G$-representation $V$ with $\dim(V)\geq 3$ satisfies the vanishing requirement.
\end{prop}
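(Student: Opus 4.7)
The plan is case analysis based on whether $V$ contains a trivial summand, and if not, on the group $G$. When $V \supseteq 1$, the hypothesis $\dim V \geq 3$ gives $V \neq 2$, so Proposition~\ref{pr:vanishing-case1} applies directly. The substantive work therefore concerns the case $V^G = 0$.

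Assume $V^G = 0$. By Proposition~\ref{pr:vanish1}, the vanishing requirement is equivalent to the statement $H^{\ell(V-1)+V}(S(V)) = 0$ for every $\ell \in \Z$, and via the cofiber sequence $S(V)_+ \to S^0 \llra{a_V} S^V$ this reduces to checking that $\cdot a_V\colon \M^{\ell V - \ell} \to \M^{(\ell+1)V - \ell}$ is surjective and $\cdot a_V\colon \M^{(\ell+1)V - \ell} \to \M^{(\ell+2)V - \ell}$ is injective for every $\ell$. For each listed group I would enumerate the representations with $V^G = 0$ and $\dim V \geq 3$: for $C_2$ these are $V = \sigma^n$ with $n \geq 3$; for $C_p$ with $p$ odd these are sums of at least two nontrivial two-dimensional rotation representations; for $C_{p^2}$ these are analogous sums over the nontrivial irreducibles; and for $\Sigma_3$ these are nontrivial combinations of $\sigma$ and $\lambda$ without a trivial summand.

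For each such $V$ I would then verify the two conditions in three ranges of $\ell$. When $\ell \geq 0$ the relevant groups sit in the positive cone of $\M$, where Propositions~\ref{pr:a-group} and~\ref{pr:euler-mult} describe them as cyclic groups generated by Euler classes and identify multiplication by $a_V$ explicitly; the checks then reduce to divisibility comparisons among the integers $d(W)$. When $\ell$ is sufficiently negative, Proposition~\ref{pr:negative-W} forces one or both of the source/target groups to vanish outright, and the conditions are automatic. The intermediate range requires an explicit description of the negative cone of $\M$, which for $C_p$, $C_{p^2}$, and $\Sigma_3$ is available in the literature; the verification in each case is a finite bookkeeping task using the known presentation of $\M$.

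The main obstacle is this intermediate range: one needs detailed knowledge of the negative-cone portion of $\M$ in particular bidegrees, and the behavior of multiplication by $a_V$ there can be subtle. The hypothesis $\dim V \geq 3$ is sharp for exactly this reason: as noted in the introduction, when $\dim V = 2$ the vanishing at $\ell = -2$ typically fails, and the dimensional slack provided by $\dim V \geq 3$ is precisely what prevents an analogous failure at each $\ell$ in the higher-dimensional cases. Thus the heart of the proof is carrying out, for each group, a careful case-by-case check of the negative-cone bidegrees where both multiplications by $a_V$ could potentially be nonzero, relying on the explicit structure of $\M$ recorded in the paper.
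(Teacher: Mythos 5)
Your overall strategy---split off the $V\supseteq 1$ case via Proposition~\ref{pr:vanishing-case1}, then verify the two $a_V$-multiplication conditions range-by-range in $\ell$ using explicit knowledge of $\M$---is the same as the paper's, which relegates the work to Appendix~\ref{se:verify}. But your description of how the ranges of $\ell$ behave contains two genuine errors that would derail the verification.

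First, your claim that ``when $\ell$ is sufficiently negative, Proposition~\ref{pr:negative-W} forces one or both of the source/target groups to vanish outright'' is false, and with it the claim that only a finite ``intermediate range'' needs explicit checking. For $\ell\leq -3$ the middle group $H^{(\ell+1)V-\ell}(\pt)$ sits in the negative cone in a degree of negative rank, and the negative cone is far from zero there: it contains the infinitely $a$-divisible exotic classes (e.g.\ $\Sigma^1(\tfrac{1}{u_\lambda^k a_\lambda^l})$ for $C_p$), which live precisely in negative-rank degrees. Proposition~\ref{pr:negative-W} only covers indices of the form $-W$ and $1-W$ and says nothing about these. The actual argument for all $\ell\leq -3$ is not that the groups vanish but that multiplication by $a_V$ is surjective/injective there because the relevant classes are infinitely divisible by the Euler classes; this is a uniform structural argument over an infinite family of $\ell$, not finite bookkeeping. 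Second, for $\ell\geq 1$ the middle group $\M^{(\ell+1)V-\ell}$ has negative integer part and so involves the orientation classes $u_W$, not just Euler classes; Propositions~\ref{pr:a-group} and~\ref{pr:euler-mult} describe only $\M^W$ for honest representations $W$ and do not suffice. One needs the full positive cone, including the $au$-relations (e.g.\ $a_\sigma^2 u_\lambda = 2a_\lambda u_{2\sigma}$ for $C_4$, $a_1u_0=pa_0u_1$ for $C_{p^2}$), to see which monomials occupy the middle degree and whether $a_V$-multiplication in and out of them behaves correctly. Finally, for the cyclic groups you would also need some device to handle the several non-isomorphic two-dimensional irreducibles $\lambda(k)$ and their distinct Euler classes; the paper does this with Proposition~\ref{pr:HZ-equiv}, a $p$-local equivalence $H\mZ\Smash S^{\lambda(k)}\he H\mZ\Smash S^{\lambda(rk)}$ compatible with Euler classes (justified by Remark~\ref{re:loc}), which reduces everything to the subring of $\M$ indexed on $1$ and $\lambda(1)$ (and $\lambda(p)$ for $C_{p^2}$). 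Without some such reduction your enumeration of representations for $C_p$ and $C_{p^2}$ does not connect to the published presentations of $\M$.
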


\begin{proof}
In each setting the proof amounts to a case-by-case analysis of the detailed computations of $\M$ available in the literature.  This is somewhat lengthy, though the main aspect that is ``hard'' is organizing the known facts about $\M$. We go through the cases in detail in Appendix~\ref{se:verify}.
\end{proof}

\subsection{The case when \mdfn{$V\supseteq 1$}} 
We have already proven that such $V$ satisfy the vanishing requirement, so we obtain the following:

\begin{prop}\label{homologyfree} If $V\supsetneq 1$ then $H\undZ \Smash\OC_k(V)_+$ is weakly equivalent, as an $H\mZ$-module, to a wedge sum of copies of $H\undZ \Smash S^{j(V-1)}$ where $0\leq j \leq k$.
\end{prop}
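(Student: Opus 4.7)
My plan is to realize $\OC_k(V)$ as the complement of a $V$-arrangement in $V^k$, apply the splitting Theorem~\ref{th:splitting}, and then unpack the resulting summands using $V\supseteq 1$.

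First, I would write
\[ \OC_k(V) \;=\; V^k - \bigcup_{1\leq i<j\leq k} D_{i,j}, \qquad D_{i,j}=\{\und{x}\in V^k : x_i=x_j\}. \]
Each $D_{i,j}$ is a $G$-subrepresentation of $V^k$ isomorphic to $V^{k-1}$. To verify that $\{D_{i,j}\}$ is a $V$-arrangement, I observe that any $r$-fold intersection $D_{i_1,j_1}\cap\cdots\cap D_{i_r,j_r}$ consists of the configurations constant on the equivalence classes of the smallest equivalence relation on $\{1,\dots,k\}$ that identifies $i_\ell$ with $j_\ell$ for each $\ell$; such an intersection is $G$-isomorphic to $V^s$ where $s$ is the number of equivalence classes, and since each new relation reduces the number of classes by at most one we have $s\geq k-r$, as required.

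Next I would invoke Proposition~\ref{pr:vanishing-case1} to conclude that $V$ satisfies the vanishing requirement, provided $V$ is not the $2$-dimensional trivial representation. In that exceptional case the $G$-action on $\OC_k(V)$ is trivial and the claim follows from Proposition~\ref{pr:trivial-action} together with the classical computation of $H^*_{\sing}(\OC_k(\R^2))$. Assuming henceforth that the vanishing requirement holds, Proposition~\ref{groupvanish1} supplies the hypothesis of Theorem~\ref{th:splitting}, which yields an equivalence of $H\mZ$-modules
\[ H\mZ \Smash \OC_k(V)_+ \;\he\; \bigvee_{j=0}^{k-1}\bigl(H\mZ \Smash \Sigma^{j(V-1)}(S(V)_+)\bigr)^{n_j} \]
for some nonnegative multiplicities $n_j$ that I would read off from the induction in the proof of Theorem~\ref{th:splitting}.

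Finally, since $V\supseteq 1$ we have $S(V)\iso S^{V-1}$, and the canonical equivalence $S(V)_+\he S(V)\vee S^0$ (after choosing any basepoint in $S(V)$) gives
\[ H\mZ\Smash \Sigma^{j(V-1)}(S(V)_+) \;\he\; (H\mZ\Smash S^{(j+1)(V-1)}) \,\vee\, (H\mZ\Smash S^{j(V-1)}). \]
Collecting across $j=0,\dots,k-1$ leaves a wedge whose summands are all of the form $H\mZ\Smash S^{j(V-1)}$ with $0\leq j\leq k$, which is the claim.

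The main obstacle is essentially notational: confirming the intersection structure needed for the $V$-arrangement hypothesis and handling the $V=2$ edge case separately. All of the genuine homotopical content has already been absorbed into Theorem~\ref{th:splitting} and Proposition~\ref{pr:vanishing-case1}, so the task at this stage is mostly to verify that the input conditions of those results apply to $\OC_k(V)$.
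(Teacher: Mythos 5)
Your argument is essentially the paper's proof: realize $\OC_k(V)$ as the complement of the $V$-arrangement $\{D_{i,j}\}$, invoke Theorem~\ref{th:splitting} via Propositions~\ref{pr:vanishing-case1} and \ref{groupvanish1}, and then use $S(V)\he S^{V-1}$ together with splitting off the disjoint basepoint, $H\mZ\Smash S(V)_+\he H\mZ\Wedge(H\mZ\Smash S^{V-1})$, to land in the stated range $0\leq j\leq k$. The one point to adjust is the exceptional case $V=2$: Proposition~\ref{pr:trivial-action} only computes the cohomology groups of a trivial $G$-space, whereas the claim is a splitting of $H\mZ\Smash\OC_k(V)_+$ as an $H\mZ$-module, which is stronger; the paper instead deduces this case from the fact that the forgetful functor $\Psi\colon H\mZ\MMod\to H\Z\MMod$ is a triangulated equivalence on the thick subcategory generated by integer suspensions of $H\mZ$, combined with the classical nonequivariant splitting. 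With that substitution your proof is complete.
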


\begin{proof}
Recall that $\OC_k(V)$ can be constructed inductively by removing the $\binom{k}{2}$  subspaces $D_{i,j}=\{\und{x}\in V^k \,|\, x_i=x_j\}$. The subspaces $D_{i,j}$ form a $V$-arrangement. Thus for $V\neq 2$ the result follows from Theorem~\ref{th:splitting} and Proposition~\ref{pr:vanishing-case1} after noting $S(V)\simeq S^{V-1}$ and $H\mZ\Smash S(V)_+\he H\mZ \Wedge (H\mZ\Smash S^{V-1})$.

The case $V=2$ must be handled separately, but follows trivially from the fact that the forgetful map $\Psi\colon H\mZ\MMod\ra H\Z\MMod$, when restricted to the thick subcategory generated by trivial suspensions of $H\mZ$, is a triangulated equivalence.  
\end{proof}

\begin{thm} 
\label{th:smash-descript}
Let $V$ be an orthogonal $G$-representation such that $V\supsetneq 1$. Then 
\[
H\undZ \Smash\OC_k(V)_+ \simeq \bigvee_{j=0}^k (H\undZ \Smash S^{j(V-1)})^{c(k,k-j)}
\]
as $H\mZ$-modules,
where $c(k,k-j)$ are the unsigned Stirling numbers of the first kind (see Section~\ref{se:Stirling}).
\end{thm}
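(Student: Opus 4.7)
The plan is to use the splitting already established in Proposition~\ref{homologyfree} and then determine the multiplicities by passing to the underlying non-equivariant spectrum, where the answer is already known classically.

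First, by Proposition~\ref{homologyfree}, there exist integers $n_0, n_1, \ldots, n_k \geq 0$ (depending a priori on $V$ and $k$) such that
\[
H\undZ \Smash \OC_k(V)_+ \he \bigvee_{j=0}^{k} \bigl(H\undZ \Smash S^{j(V-1)}\bigr)^{n_j}
\]
as $H\undZ$-modules. The whole task is to identify $n_j = c(k, k-j)$.

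Next, I would apply the forgetful functor $\psi$ from $G$-equivariant stable homotopy to non-equivariant stable homotopy. This functor is triangulated and symmetric monoidal, and it sends $H\undZ$ to $H\Z$, sends $S^{V-1}$ to $S^{|V|-1}$, and sends $\OC_k(V)_+$ to the underlying pointed space $\OC_k(\R^d)_+$ where $d = |V|$. Applying $\psi$ to the splitting above yields
\[
H\Z \Smash \OC_k(\R^d)_+ \he \bigvee_{j=0}^{k} \bigl(H\Z \Smash S^{j(d-1)}\bigr)^{n_j}.
\]
Since $V \supsetneq 1$ forces $d \geq 2$, the integers $j(d-1)$ for $0 \leq j \leq k$ are pairwise distinct, so the wedge summands sit in distinct non-equivariant degrees and the multiplicities $n_j$ can be read off individually.

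Finally, I would compute $H^{j(d-1)}_{sing}(\OC_k(\R^d))$ two ways. From the splitting, this group has rank exactly $n_j$. From Proposition~\ref{pr:OC-classical}, this group has rank $c(k, k-j)$. Therefore $n_j = c(k, k-j)$, which gives the theorem.

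There is no real obstacle here: the hard work (the vanishing of the boundary maps giving the splitting) has already been done in Theorem~\ref{th:splitting} and Proposition~\ref{homologyfree}, and the classical non-equivariant count is Proposition~\ref{pr:OC-classical}. The only thing to verify carefully is that the forgetful functor is sufficiently well-behaved (symmetric monoidal and detects wedge summands by dimension) and that $d \geq 2$ guarantees distinctness of the degrees $j(d-1)$, so that the multiplicity on each side can be extracted from a single cohomology group.
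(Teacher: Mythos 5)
Your proposal is correct and follows essentially the same route as the paper's own proof: take the abstract splitting from Proposition~\ref{homologyfree}, apply the forgetful functor to $H\Z$-modules, and match multiplicities against the classical computation of $H^*_{sing}(\OC_k(\R^n))$ in Proposition~\ref{pr:OC-classical}. The observation that $V\supsetneq 1$ forces $\dim V\geq 2$, so the degrees $j(\dim V-1)$ are distinct and the $n_j$ can be read off one at a time, is exactly the point the paper relies on implicitly.
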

\begin{proof} By Proposition~\ref{homologyfree} and Theorem~\ref{th:splitting} we know for some nonnegative integers $b_j$ that
\[H\undZ \Smash\OC_k(V)_+\simeq  \bigvee_{j=0}^k (H\undZ \Smash S^{j(V-1)})^{b_j}.\]
We just need to show $b_j=c(k,k-j)$. Let $n=\dim(V)$. The forgetful
functor
$\Psi\colon H\undZ\MMod \to H\Z\MMod$ preserves wedge sums and sends $H\mZ\Smash X$ to $H\Z\Smash X$ (where in the latter case we have forgotten the $G$-action on $X$),
so applying this to $H\undZ \Smash\OC_k(V)$ gives an underlying equivalence 
\[H\Z \Smash\OC_k(\R^n)_+\simeq  \bigvee_{j=0}^k (H\Z \Smash S^{j(n-1)})^{b_j}. \]
But we know the homology of $\OC_k(\R^n)$ from Proposition~\ref{pr:OC-classical}, and thus it must be that $b_j=c(k,k-j)$.
\end{proof}

\begin{cor}
\label{co:main-1} If $V\supsetneq 1$ then
$H^\star(\OC_k(V);\mZ)$ is a free $\M$-module generated by elements in degrees $j(V-1)$ for $0\leq j\leq k$, with $c(k,k-j)$ elements in degree $j(V-1)$.
\end{cor}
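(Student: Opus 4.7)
The plan is to derive this purely formally from Theorem~\ref{th:smash-descript}, by passing from the $H\mZ$-module splitting of $H\mZ\Smash \OC_k(V)_+$ to the cohomology of $\OC_k(V)$ summand by summand. The only real input beyond the theorem is an identification of the $\M$-module $H^\star(H\mZ\Smash S^{j(V-1)};\mZ)$.

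First I would unpack how Bredon cohomology is computed from $H\mZ\Smash X_+$. By the free--forgetful adjunction between genuine $G$-spectra and $H\mZ$-module spectra, for any $\alpha\in RO(G)$ one has
\[ H^\alpha(\OC_k(V);\mZ) \;=\; [\OC_k(V)_+,\Sigma^\alpha H\mZ] \;\cong\; [H\mZ\Smash \OC_k(V)_+,\Sigma^\alpha H\mZ]_{H\mZ}. \]
Substituting the splitting from Theorem~\ref{th:smash-descript} and using that $[-,-]_{H\mZ}$ converts wedges in the first variable into direct products (which equal direct sums for bounded-below indexing), I obtain
\[ H^\alpha(\OC_k(V);\mZ) \;\cong\; \bigoplus_{j=0}^{k}\,\bigl([H\mZ\Smash S^{j(V-1)},\Sigma^\alpha H\mZ]_{H\mZ}\bigr)^{c(k,k-j)}. \]

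Next I would identify each summand. For any representation $W$, the $H\mZ$-module $H\mZ\Smash S^{W}$ is a free $H\mZ$-module of rank one shifted by $W$, so
\[ [H\mZ\Smash S^{W},\Sigma^\alpha H\mZ]_{H\mZ} \;\cong\; [S^0,\Sigma^{\alpha-W}H\mZ] \;\cong\; \M^{\alpha-W}. \]
Taking $W=j(V-1)$ and assembling over all $\alpha$, this contributes one free rank-one $\M$-module concentrated in degree $j(V-1)$, and there are $c(k,k-j)$ such copies by the splitting. Collecting these contributions yields the stated description of $H^\star(\OC_k(V);\mZ)$ as a free $\M$-module with $c(k,k-j)$ generators in degree $j(V-1)$ for each $0\leq j\leq k$.

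There is no substantive obstacle here; the result is essentially a bookkeeping consequence of Theorem~\ref{th:smash-descript}. The only mild subtlety worth being explicit about is the $RO(G)$-grading convention: one must interpret $j(V-1)$ as an element of $RO(G)$ (rigidifying as in Section~\ref{se:RO(G)} if one wants canonical generators), but since we are only asserting freeness and the rank in each degree, the answer does not depend on these choices.
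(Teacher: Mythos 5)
Your argument is exactly the paper's proof, which simply cites Theorem~\ref{th:smash-descript} together with the isomorphism $H^\star(X;\mZ)\iso [H\mZ\Smash X_+,\Sigma^\star H\mZ]_{H\mZ}$; you have merely filled in the routine bookkeeping (adjunction, wedges to sums, and $[H\mZ\Smash S^W,\Sigma^\alpha H\mZ]_{H\mZ}\iso\M^{\alpha-W}$). The proposal is correct and follows the same route.
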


\begin{proof}
Immediate from Theorem~\ref{th:smash-descript}, using that $H^\star(X;\mZ)\iso [H\mZ\Smash X_+,\Sigma^\star H\mZ]_{H\mZ}$.
\end{proof}

\subsection{The case when \mdfn{$V\not\supseteq 1$}}
For this case we only have results when $V$ satisfies the vanishing requirement of Definition~\ref{de:vanishing}.  When that is satisfied,
Theorem~\ref{th:splitting} gives us an additive splitting 
for the cohomology of $\OC_k(V)$ in terms of the cohomology of suspensions of $S(V)_+$.  To calculate the multiplicity of the different pieces we can again appeal to the forgetful functor to non-equivariant topology.  The extra basepoint in $S(V)_+$ leads to a slightly different form of results compared to the $V\supseteq 1$ case. In particular, we get an alternating sum of Stirling numbers in the answer.

\begin{defn}\label{aseq} Define $a(k,j)$ to be the following alternating sum of Stirling numbers of the first kind:
\begin{align*}
a(k,j)&= c(k,k-j)-c(k,k-(j-1))+c(k,k-(j-2))-...+(-1)^{j}c(k,k)\\
&=\sum_{i=0}^{j}(-1)^ic(k,k-(j-i)).
\end{align*}
\end{defn}

\begin{thm}
\label{th:main-2}
Let $G$ be a finite group and suppose $V$ is a $G$-representation that satisfies the vanishing requirement. Then
\begin{align*}
H\undZ \Smash \OC_k(V)_+& \simeq \bigvee_{j=0}^{k-1} \Bigl (H\undZ \Smash \Sigma^{j(V-1)}\bigl(S(V)_+\bigr)\Bigr )^{a(k,j)}\end{align*}
where $a(k,j)$ is the alternating sum defined in Definition~\ref{aseq}.
\end{thm}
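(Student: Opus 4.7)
The plan is to apply Theorem~\ref{th:splitting} to $X=\OC_k(V)$ and then determine the multiplicities of the resulting wedge summands by passing to non-equivariant topology, exactly as in the proof of Theorem~\ref{th:smash-descript}. First, note that $\OC_k(V)=V^k-\bigcup_{1\leq i<j\leq k}D_{i,j}$, where each $D_{i,j}=\{\und{x}\in V^k:x_i=x_j\}$. The collection $\{D_{i,j}\}$ is a $V$-arrangement: any $r$-fold intersection is the linear subspace where a certain equivalence relation on coordinates is imposed, so it is isomorphic to $V^s$ for $s$ equal to the number of equivalence classes, and clearly $s\geq k-r$. Since $V$ satisfies the vanishing requirement, Proposition~\ref{groupvanish1} supplies the hypothesis of Theorem~\ref{th:splitting}, giving an equivalence
\[
H\undZ\Smash \OC_k(V)_+\simeq \bigvee_{j=0}^{k-1}\bigl(H\undZ\Smash \Sigma^{j(V-1)}(S(V)_+)\bigr)^{b_j}
\]
for some non-negative integers $b_j$. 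It remains only to identify $b_j=a(k,j)$.

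To compute the $b_j$, apply the forgetful functor $\Psi\colon H\undZ\MMod\to H\Z\MMod$. Since $\Psi$ preserves wedges, commutes with suspension, and sends $S(V)$ to the non-equivariant sphere $S^{n-1}$ with $n=\dim V$, we obtain
\[
H\Z\Smash \OC_k(\R^n)_+\simeq \bigvee_{j=0}^{k-1}\bigl(H\Z\Smash \Sigma^{j(n-1)}(S^{n-1}_+)\bigr)^{b_j}.
\]
The non-equivariant equivalence $S^{n-1}_+\simeq S^0\vee S^{n-1}$ splits each wedge factor into $(H\Z\Smash S^{j(n-1)})\vee(H\Z\Smash S^{(j+1)(n-1)})$, so the total multiplicity of $H\Z\Smash S^{j(n-1)}$ is $b_j+b_{j-1}$ (with the convention $b_{-1}=b_k=0$). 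By Proposition~\ref{pr:OC-classical} this must equal $c(k,k-j)$. Unwinding the recursion $b_j=c(k,k-j)-b_{j-1}$ starting from $b_0=c(k,k)=1$ yields
\[
b_j=\sum_{i=0}^{j}(-1)^i c(k,k-(j-i))=a(k,j),
\]
as desired.

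The potential obstacle is the consistency of this recursion at the top: since $\OC_k(\R^n)$ has no cohomology in degree $k(n-1)$, the formula forces $b_{k-1}=0$, equivalent to the identity $\sum_{j=1}^{k}(-1)^{j-1}c(k,j)=0$ for $k\geq 2$. This is the evaluation at $x=1$ of $x(x-1)(x-2)\cdots(x-k+1)=\sum_j(-1)^{k-j}c(k,j)x^j$, which vanishes, so the recursion is consistent. The only other subtlety is the legitimacy of reading off the multiplicities $b_j$ from the non-equivariant image; this is justified because, as invoked in the proof of Proposition~\ref{homologyfree}, the restriction of $\Psi$ to the thick subcategory generated by integer suspensions of $H\undZ$ is a triangulated equivalence, so distinct wedge patterns remain distinguishable after forgetting the group action.
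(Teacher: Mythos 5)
Your proposal is correct and follows essentially the same route as the paper's proof: apply Theorem~\ref{th:splitting} to the $V$-arrangement of diagonals $D_{i,j}$, pass to non-equivariant topology via the forgetful functor, split $S^{n-1}_+\simeq S^0\vee S^{n-1}$, and solve the resulting recursion against Proposition~\ref{pr:OC-classical} to identify $b_j=a(k,j)$. The consistency check at the top degree is a nice sanity check the paper omits (it is automatic since the $b_j$ are known to exist), but otherwise the two arguments coincide.
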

\begin{proof} The ordered configuration space $\OC_k(V)$ is the complement in $V^k$ of the $V$-arrangement $D_{i,j}=\{\und{x}\in V^k \,|\, x_i=x_j\}$. Thus, as long as $V$ satisfies the vanishing requirement, Theorem~\ref{th:splitting} gives that
\[
H\undZ \Smash \OC_k(V)_+ \simeq \bigvee_{j=0}^{k-1} \Bigl (H\undZ \Smash \Sigma^{j(V-1)}\bigl( S(V)_+\bigr) \Bigr )^{b_j}
\]
for some $b_j$. We just need to show $b_j=a(k,j)$.

Let $n=\dim(V)$ and apply the forgetful functor $\Psi\colon H\undZ\MMod
\to H\Z\MMod$ to get an underlying equivalence 
\begin{align*}
H\Z \Smash\OC_k(\R^n)_+
&\simeq \bigvee_{j=0}^{k-1} \Bigl (H\Z \Smash \Sigma^{j(n-1)} \bigl( S^{n-1}_+\bigr) \Bigr)^{b_j}\\
&\simeq \bigvee_{j=0}^{k-1} (H\Z \Smash (S^{j(n-1)}\vee S^{(j+1)(n-1)}))^{b_j}\\
&\simeq H\Z^{b_0} \vee \bigvee_{j=1}^{k-1} (H\Z \Smash S^{j(n-1)})^{b_j+b_{j-1}}.
\end{align*}
Using the classical computation given in Proposition~\ref{pr:OC-classical} we see $b_0=c(k,k)$ and $b_{j-1}+b_j=c(k,k-j)$ for $j>0$.
We can then inductively solve to get \[b_j=c(k,k-j)-c(k,k-(j-1))+c(k,k-(j-2))+\dots\]
which is exactly the sequence $a(k,j)$ defined above. 
\end{proof}

\begin{cor}
\label{co:main-2}
In the setting of Theorem~\ref{th:main-2} there is a decomposition of $\M$-modules
\[ H^\star(\OC_k(V);\mZ)\iso \bigoplus_{j=0}^{k-1} \Sigma^{j(V-1)}H^\star(S(V);\mZ)^{\oplus a(k,j)}.
\]
\end{cor}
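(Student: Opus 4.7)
The plan is to derive Corollary~\ref{co:main-2} by simply applying the representable cohomology functor $[-,\Sigma^\star H\mZ]_{H\mZ}$ to the $H\mZ$-module equivalence produced by Theorem~\ref{th:main-2}. By definition,
\[H^\star(\OC_k(V);\mZ)=[H\mZ\Smash \OC_k(V)_+,\,\Sigma^\star H\mZ]_{H\mZ},\]
so the left-hand side of the splitting in Theorem~\ref{th:main-2} already yields exactly the cohomology group we are after.

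For the right-hand side, the wedge is finite (both the range $0\leq j\leq k-1$ and the multiplicities $a(k,j)$ are finite), so the contravariant functor $[-,\Sigma^\star H\mZ]_{H\mZ}$ converts the wedge into a direct sum. This gives
\[H^\star(\OC_k(V);\mZ)\iso \bigoplus_{j=0}^{k-1}\bigl[H\mZ\Smash \Sigma^{j(V-1)}\bigl(S(V)_+\bigr),\,\Sigma^\star H\mZ\bigr]_{H\mZ}^{\oplus a(k,j)}.\]
Each summand simplifies via the shift isomorphism in the triangulated $H\mZ$-module category to
\[\bigl[H\mZ\Smash S(V)_+,\,\Sigma^{\star-j(V-1)} H\mZ\bigr]_{H\mZ}=H^{\star-j(V-1)}(S(V);\mZ),\]
which is, by convention, precisely the summand $\Sigma^{j(V-1)}H^\star(S(V);\mZ)$ appearing in the stated decomposition.

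The $\M$-module structure comes for free: everything is taking place in the category of $H\mZ$-modules, and the ring $\M=H^\star(\pt;\mZ)$ acts on each of the $[\blank,\Sigma^\star H\mZ]_{H\mZ}$-groups in a manner natural with respect to $H\mZ$-module maps, so the splitting is automatically a splitting of $\M$-modules. There is essentially no obstacle in this step --- the substantive work has been carried out in Theorems~\ref{th:splitting} and~\ref{th:main-2}. The only point requiring care is keeping track of the grading convention to ensure that the $(V-1)$-shift on the spectrum side matches the $\Sigma^{j(V-1)}$ on the module side, which is routine.
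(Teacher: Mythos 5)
Your proposal is correct and matches the paper's approach: the paper's proof of this corollary is simply ``Immediate,'' relying on exactly the identification $H^\star(X;\mZ)\iso [H\mZ\Smash X_+,\Sigma^\star H\mZ]_{H\mZ}$ that you spell out. Your version just makes explicit the routine steps (finite wedge to direct sum, shift isomorphisms, naturality of the $\M$-action) that the authors leave implicit.
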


\begin{proof}
Immediate.
\end{proof}

\subsection{The case of $G=C_2$}
We demonstrate our results with a complete discussion of the spaces $\OC_k(V)$ for all nontrivial $C_2$-representations $V$.  
Let $\sigma$ denote the sign representation on $\R^1$, and note all finite-dimensional orthogonal representations satisfy $V\cong p1\oplus q\sigma$ for some $p,q\geq 0$.  

For the case $p\geq 1$ Corollary~\ref{co:main-1} gives that $H^\star(\OC_k(V);\mZ)$ is a free $\M$-module and provides a precise description of the degrees and multiplicities of the generators.
So we instead focus on $p=0$, where $V=q\sigma$.

If $q\geq 3$ then $V$ satisfies the vanishing criterion by Proposition~\ref{pr:VR-examples}, and so Corollary~\ref{co:main-2} applies.  For this to be useful we need to know $H^\star(S(V);\mZ)$, but this is well-known.  If we set
\[ \BB_q=\M[u_\sigma^{-1}]/(a_\sigma^q)=\Z[u_\sigma^{\pm 1},a_\sigma]/(2a_\sigma,a_\sigma^q)
\]
then we have
\[ H^\star(S(q\sigma);\mZ)=\BB_q[\iota]/(a_\sigma \iota,\iota^2)
\]
where $\iota$ is a semi-fundamental class of degree $q\sigma-1$.  
Figure~\ref{fig:S(2sigma)} shows pictures for $q=2$ and $q=3$. We have used motivic indexing for the picture, so that the index $p+q\sigma$ is drawn in spot $(p+q,q)$ on the grid; this explains the labelling of the axes.  Squares denote copies of $\Z$ and dots denote copies of $\Z/2$.  Diagonal lines represent multiplication by $a_\sigma$, e.g. the line connecting $u$ to $a_\sigma u$.
Note that the forgetful map satisfies $\psi(u)=1$, $\psi(a_\sigma)=0$, and $\psi(\iota)=2$.

\begin{figure}[ht]
\begin{tikzpicture}[scale=0.6]
\draw[step=1cm,gray,very thin] (-4.5,-2.5) grid (4.5,6.5);
\draw[] (-4.5,2) -- (4.5,2) node[below, black] {\small $1$};
\draw[] (0,-2.5) -- (0,6.5) node[left, black] {\small $\sigma\!-\!1$};

\foreach \y in {-1,...,3}
      \draw[] (0.95,2*\y+0.5) node[left,black] { $\square$};
      
\foreach \y in {-1,...,3}
      \draw[] (1.95,2*\y+0.5) node[left,black] { $\square$};

\foreach \y in {-1,...,2}
      \fill (1.55,2*\y+1.5) circle (3pt);

\foreach \y in {-1,...,2}
      \draw[thick] (0.65,2*\y+0.65) -- (1.5,2*\y+1.5);

\draw[] (0.5,4.25) node[left,black] {\small $u$};
\draw[] (0.5,2.25) node[left,black] {\small $1$};
\draw[] (2.45,3.35) node[left,black] {\small $a_\sigma$};
\draw[] (0.5,0.25) node[left,black] {\small $u^{-1}$};
\draw[] (2.35,4.45) node[left,black] {\small $\iota$};
\draw[] (2.45,6.45) node[left,black] {\small $u\iota$};
\draw[] (3.15,2.45) node[left,black] {\small {$u^{-1}\iota$}};

\end{tikzpicture}
\ \ \qquad
\begin{tikzpicture}[scale=0.6]
\draw[step=1cm,gray,very thin] (-4.5,-2.5) grid (4.5,6.5);
\draw[] (-4.5,2) -- (4.5,2) node[below, black] {\small $1$};
\draw[] (0,-2.5) -- (0,6.5) node[left, black] {\small $\sigma\!-\!1$};

\foreach \y in {-1,...,3}
      \draw[] (0.95,2*\y+0.5) node[left,black] { $\square$};
      
\foreach \y in {-1,...,2}
      \draw[] (2.95,2*\y+1.5) node[left,black] { $\square$};

\foreach \y in {-1,...,2}
      \fill (1.55,2*\y+1.5) circle (3pt);

\foreach \y in {-2,...,2}
      \fill (2.55,2*\y+2.5) circle (3pt);

\foreach \y in {-1,...,2}
      \draw[thick] (0.65,2*\y+0.65) -- (1.5,2*\y+1.5);

\foreach \y in {-1,...,2}
      \draw[thick] (1.65,2*\y+1.65) -- (2.5,2*\y+2.5);

\draw[thick] (1.65,-2.35) -- (2.5,-1.5);

\draw[] (0.5,4.25) node[left,black] {\small $u$};
\draw[] (0.5,2.25) node[left,black] {\small $1$};
\draw[] (2.45,3.35) node[left,black] {\small $a_\sigma$};
\draw[] (3.45,4.35) node[left,black] {\small $a_\sigma^2$};
\draw[] (0.5,0.25) node[left,black] {\small $u^{-1}$};
\draw[] (3.35,5.45) node[left,black] {\small $\iota$};
\draw[] (4.15,3.45) node[left,black] {\small {$u^{-1}\iota$}};

\end{tikzpicture}
\caption{The rings $H^\star(S(2\sigma);\mZ)$ and
$H^\star(S(3\sigma);\mZ)$
}
\label{fig:S(2sigma)}
\end{figure}

The representation $V=2\sigma$ does not satisfy the vanishing hypothesis, but we will see below that Corollary~\ref{co:main-2} still holds in this case.  So it is convenient to use this as an example.  The splitting of Corollary~\ref{co:main-2} says that additively we have
\[ H^\star(\OC_3(2\sigma);\mZ)\iso H^\star(S(2\sigma);\mZ)\oplus  \Sigma^{2\sigma-1} H^\star(S(2\sigma);\mZ) \oplus
\Sigma^{2\sigma-1} H^\star(S(2\sigma);\mZ).
\]
The picture for this is in Figure~\ref{fig:H-OC}.
The number $2$'s shown remind us that there are {\it two\/} copies in those spots (e.g. two red dots in the (2,1) box), and the coloring on the second two summands is just to help distinguish them from the first.  The ring structure for this example will be discussed in Section~\ref{se:ring-special-case} below.

\begin{figure}[ht]
\begin{tikzpicture}[scale=1]
\draw[step=1cm,gray,very thin] (-3.5,-0.5) grid (3.5,5.5);
\draw[thick] (-3.5,2) -- (3.5,2) node[below, black] {\small $1$};
\draw[thick] (-1,-0.5) -- (-1,5.5) node[left, black] {\small $\sigma\!-\!1$};

\foreach \y in {0,...,2}
      \draw[] (-0.2,2*\y+0.5) node[left,black] {$\square$};
      
\foreach \y in {0,...,2}
      \draw[] (0.8,2*\y+0.5) node[left,black] {$\square$};

\foreach \y in {-1,...,2}
      \fill (0.55,2*\y+1.5) circle (3pt);

\foreach \y in {0,...,2}
      \draw[thick] (-0.35,2*\y+0.65) -- (0.5,2*\y+1.5);

\draw[] (-0.5,4.25) node[left,black] {\small $u$};
\draw[] (-0.5,2.25) node[left,black] {\small $1$};
\draw[] (-0.1,0.15) node[left,black] {\small $u^{-1}$};
\draw[] (0.45,4.45) node[left,black] {\small $\iota$};

\foreach \y in {0,...,2}
      \draw[] (1.0,2*\y+0.2) node[left,red] {$\square$};
\foreach \y in {0,...,2}
      \draw[] (2.0,2*\y+0.2) node[left,red] {$\square$};
\foreach \y in {-1,...,2}
      \fill[red] (1.65,2*\y+1.3) circle (3pt);

\foreach \y in {0,...,2}
      \draw[thick,red] (0.85,2*\y+0.35) -- (1.6,2*\y+1.2);

\foreach \y in {0,...,2}
 \draw[] (0.9,0.2+2*\y) node[left,red] {\tiny $2$};

\foreach \y in {0,...,2} 
 \draw[] (1.9,0.2+2*\y) node[left,red] {\tiny $2$};

\foreach \y in {0,...,2} 
 \draw[] (2,1.2+2*\y) node[left,red] {\tiny $2$};
 
\end{tikzpicture}
\caption{The additive structure of $H^\star(\OC_3(2\sigma);\mZ)$}
\label{fig:H-OC}
\end{figure}

\noindent

\begin{prop}
\label{pr:V=2}
When $G=C_2$ and $V=2\sigma$ the conclusion of Theorem~\ref{th:main-2} still holds.  
\end{prop}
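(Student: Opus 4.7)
The plan is to retain the inductive framework of Theorem~\ref{th:splitting}, inducting on the size of the $V$-arrangement, and at each stage verify that the Pontryagin-Thom connecting map in diagram (\ref{eq:nullmap}) is null as an $H\mZ$-module map. The only obstacle is the step that invoked Proposition~\ref{groupvanish1}, since $V=2\sigma$ fails the vanishing requirement of Definition~\ref{de:vanishing}.

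First I would directly determine the obstruction groups $[S(V)_+,H\mZ\Smash \Sigma^{\ell(V-1)+V}S(V)_+]$ by applying $[S(V)_+,-]$ to the cofiber sequence obtained from (\ref{eq:a-cofiber}) after smashing with $H\mZ\Smash \Sigma^{\ell(V-1)+V}$, and using the explicit ring $H^\star(S(2\sigma);\mZ)=\BB_2[\iota]/(a_\sigma\iota,\iota^2)$ depicted in Figure~\ref{fig:S(2sigma)}. Because the nonzero classes of this ring lie only on the motivic weight lines $p+q=0$ and $p+q=1$, a direct enumeration shows that these obstruction groups vanish for all $\ell$ except $\ell=-1$ and $\ell=-2$. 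For every other $\ell$ the argument of Theorem~\ref{th:splitting} applies verbatim, and so we only need to handle map components with $\ell\in\{-1,-2\}$.

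For $\ell\in\{-1,-2\}$, I would show that the specific obstruction class arising from the Pontryagin-Thom map vanishes by combining two inputs. First, applying the forgetful functor $\Psi\colon H\mZ\MMod\to H\Z\MMod$, and using that $\Psi$ sends $\OC_k(2\sigma)_+$ to $\OC_k(\R^2)_+$, the corresponding non-equivariant connecting map must be null by the classical splitting of Proposition~\ref{pr:OC-classical}. Hence the equivariant obstruction $\xi$ lies in $\ker\Psi$ on the relevant obstruction group. Second, using the values $\psi(u_\sigma)=1$, $\psi(a_\sigma)=0$, $\psi(\iota)=2$ together with the long exact sequence identifying the obstruction group, one can pin down $\ker\Psi$ as a small subgroup contained in the $a_\sigma$-torsion summand. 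It then remains to rule out the finitely many residual classes.

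The main obstacle is this last step: eliminating the remaining $\ker\Psi$-classes in the two exceptional degrees. A clean approach is induction on $k$, with base case $k=2$ where $\OC_2(V)\simeq V\times(V-0)\simeq S(V)$ provides the equivalence $H\mZ\Smash \OC_2(V)_+\simeq H\mZ\Smash S(V)_+$ immediately. For the inductive step, one can combine the splitting at stage $k-1$ with the restriction of the cohomology classes $\omega_{ij}$ from an ambient representation containing a trivial summand (such as $1\oplus 2\sigma$, where Theorem~\ref{th:main1} applies) via the inclusion $\OC_k(2\sigma)\inc \OC_k(1\oplus 2\sigma)$; the resulting multiplicative constraints force the remaining $a_\sigma$-torsion obstruction class to vanish, and the argument of Theorem~\ref{th:splitting} produces the desired splitting at stage $k$.
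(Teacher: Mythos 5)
Your overall outline matches the paper's: run the induction of Theorem~\ref{th:splitting}, observe that the obstruction groups $X_\ell=[S(V)_+,H\mZ\Smash \Sigma^{\ell(V-1)+V}S(V)_+]$ vanish for all but finitely many $\ell$, and use the forgetful map to kill the remaining components. But there are two genuine problems. First, a computational slip: reading off $H^\star(S(2\sigma);\mZ)$ from the long exact sequence, the sandwiching groups $H^{\ell(V-1)+V}(S(V))$ and $H^{\ell(V-1)+2V-1}(S(V))$ are nonzero for $\ell\in\{-1,-2\}$ and $\ell\in\{-2,-3\}$ respectively, so the exceptional values are $\ell=-1,-2,-3$ (with $X_{-3}\iso\Z$), not just $\ell=-1,-2$. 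Second, and more seriously, your step asserting that ``the corresponding non-equivariant connecting map must be null by the classical splitting of Proposition~\ref{pr:OC-classical}'' does not follow. That proposition describes the cohomology of the full configuration space, whereas the induction needs the connecting map to vanish at \emph{every} intermediate stage, where $Y$ and $Z$ are complements of partial arrangements; and precisely when $\dim V=2$ the components $H\Z\Smash S^{j(n-1)}\to \Sigma^{n}H\Z\Smash S^{i(n-1)}$ with $j=i+2$ are not excluded for degree reasons, so the non-equivariant vanishing is exactly what must be proved, not something that comes for free. This is the crux of the $\dim V=2$ anomaly. The paper's resolution is a trick you do not have: the underlying non-equivariant maps for the $2\sigma$-arrangement are literally the same as those for the $(1\oplus\sigma)$-arrangement (both build up $\OC_k(\R^2)$), and the latter equivariant maps are already known to vanish by Propositions~\ref{groupvanish1} and~\ref{pr:vanishing-case1}, so their images under $\psi$ vanish.

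Your final step is also not in good shape, though for a happier reason. The groups $X_\ell$ for $\ell=-1,-2,-3$ are extensions of subgroups and quotients of the copies of $\Z$ generated by $u^{-1}$ and $u^{-1}\iota$ in $H^\star(S(2\sigma))$; they are free abelian ($\Z$, $\Z^2$, $\Z$), and a diagram chase using $\psi(u^{-1})=1$, $\psi(u^{-1}\iota)=2$, and the vanishing of $a_\sigma$, $a_{2\sigma}$ under $\psi$ shows $\psi$ is \emph{injective} on each of them. So there is no ``$a_\sigma$-torsion summand'' and no residual kernel classes to eliminate; your proposed inductive argument via restriction from $\OC_k(1\oplus 2\sigma)$ and ``multiplicative constraints'' is both unsubstantiated as written and unnecessary. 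To repair the proof you should (i) correct the list of exceptional $\ell$, (ii) establish injectivity of $\psi$ on those $X_\ell$, and (iii) replace the appeal to Proposition~\ref{pr:OC-classical} by the comparison with the representation $1\oplus\sigma$ (or some other genuine proof that the intermediate non-equivariant connecting maps vanish).
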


\begin{proof}
We only give a sketch.  Set $V=2\sigma$ and consider $\OC_k(V)$. Following the method of Theorem~\ref{th:splitting} we will prove the vanishing of all of the maps that come up.  These maps have components living in the groups
\[ X_j=[S(V)_+,H\mZ\Smash S^{j(V-1)+V}\Smash S(V)_+]
\]
for various values of $j\in \Z$, and we will consider the forgetful  map
\[ \psi\colon X_j\ra [S^1_+,H\Z\Smash S^{j+2}\Smash S^1_+]_e=X_j^e
\]
where the target $X_j^e$ is maps in the ordinary stable homotopy category.  
Using the cofiber sequence $S(V)_+\ra S^0\ra S^V$ in the codomain shows that $X_j$ sits in a long exact sequence
\[
\xymatrix{
\cdots \ar[r]^-{\cdot a_{2\sigma}} & H^{2V-1+j(V-1)}(S(V))\ar[r] & X_j \ar[r]& H^{V+j(V-1)}(S(V)) \ar[r]^-{\cdot a_{2\sigma}} \ar[r] & \cdots 
}
\]
The cohomology groups of $S(V)$ are as shown in Figure~\ref{fig:S(2sigma)} and one sees that the two groups sandwiching the $X_j$ are zero except in the cases $j=-1,-2,-3$.  In these cases one gets that $X_j$ is $\Z$, $\Z^2$, and $\Z$ (respectively), and in each case the forgetful map $\psi$ is injective: this is by a diagram chase using that the same is true for the relevant groups in $H^\star(S(V))$ sandwiching it, and because the maps labelled $a_{\sigma}$ and $a_{2\sigma}$ become zero under $\psi$.  So we can prove that our elements of $X_j$ are zero by proving that they are sent to zero under $\psi$.  

Now we do something slightly clever.  Let $W=1\oplus \sigma$.  Considering the same method for building up $\OC_k(W)$, we already know that all of the equivariant maps vanish by Propositions~\ref{groupvanish1} and \ref{pr:vanishing-case1}.  So applying $\psi$ to them yields the zero maps, but these are exactly the same as the  maps we needed to prove were zero.    
\end{proof}

\section{The product structure on cohomology}
\label{se:product}

In this section we study the multiplicative structure on the cohomology ring $H^\star(\OC_k(V);\mZ)$, following our work on the additive structure in the previous section.  In the case $V\supseteq 1$ we can give a presentation of the ring as an $\M$-algebra: this is Theorem~\ref{th:main1} from the introduction, which we prove here.  The proof uses the generators and relations we have produced in Section~\ref{se:gen-rels}, the additive results from Section~\ref{se:homology}, and the known calculations in the non-equivariant setting from \cite{CLM}.

The case $V\not\supseteq 1$ is intrinsically more difficult: we discuss some of the considerations in Section~\ref{se:ring-special-case} below.

\subsection{The case $V\supseteq 1$} 
We now come to one of the main results mentioned in the introduction:

\begin{thm}
\label{th:main1}
If $V\supseteq 1$ then $H^\star(\OC_k(V);\und{\Z})$ is the quotient of the free $\M$-algebra generated by classes $\omega_{ij}$ of degree $V-1$, $1\leq i\neq j\leq k$, subject to the following relations:
\begin{align*}
&\omega_{ij}=(-1)^{|V|}\omega_{ji}+a_{V-1}, \\
&\omega_{ij}^2=a_{V-1}\omega_{ij}, \\
&\omega_{ij}\omega_{jk}+\omega_{jk}\omega_{ki}+\omega_{ki}\omega_{ij}=a_{V-1}(\omega_{ij}+\omega_{jk}+\omega_{ki})-a_{V-1}^2. 
\end{align*}
\end{thm}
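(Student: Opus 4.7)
The plan is to build an $\M$-algebra map $\phi\colon R\to H^\star(\OC_k(V);\mZ)$ from the proposed presentation and then identify it as an isomorphism by exploiting the additive calculations of Section~\ref{se:homology} together with the classical non-equivariant answer. Let $R$ denote the right-hand ring in the theorem. The geometric classes $\omega_{ij}=\tilde\omega_{ij}^*(\iota_{V-1})$ already satisfy all three relations, by Proposition~\ref{pr:relations-1} and Corollary~\ref{co:arnold-2}, so $\phi$ is well defined. By Proposition~\ref{pr:OC-algebra}, $R$ is a free left $\M$-module with an explicit monomial basis $\{\nu_a\}$ containing $c(k,k-j)$ elements in degree $j(V-1)$ for each $0\leq j\leq k$. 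By Corollary~\ref{co:main-1}, $M:=H^\star(\OC_k(V);\mZ)$ is also free over $\M$ with the same ranks in the same degrees; fix an $\M$-basis $\{e_a\}$ of $M$ indexed compatibly so that $\deg e_a=\deg\nu_a$ for each $a$.

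Expand $\phi(\nu_a)=\sum_b B_{ab}\,e_b$, so that $B_{ab}\in\M^{\deg\nu_a-\deg e_b}$. Writing $\deg\nu_a=j_a(V-1)$ and $\deg e_b=j_b(V-1)$, when $j_a<j_b$ the exponent $(j_a-j_b)(V-1)$ is the negative of a nonzero representation (using $\dim V\geq 2$), so Proposition~\ref{pr:negative-W} forces $B_{ab}=0$. Ordering both bases by increasing $j$, the matrix $B=(B_{ab})$ is block triangular: its diagonal blocks have entries in $\M^0=\Z$, while the remaining blocks have entries in $\M^{m(V-1)}$ for various $m\geq 1$.

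To finish, apply the forgetful functor $\psi\colon H^\star(-;\mZ)\to H^*_{sing}(-;\Z)$. Since $\psi\colon\M^{m(V-1)}\to H^{m(n-1)}_{sing}(\pt;\Z)=0$ for every $m\geq 1$ (with $n=\dim V\geq 2$), every off-diagonal block of $B$ is killed by $\psi$, while the diagonal blocks retain their integer entries. The resulting block-diagonal integer matrix represents the non-equivariant ring map $\psi\circ\phi$, which sends each monomial $\nu_a$ to the corresponding monomial in $H^*_{sing}(\OC_k(\R^n);\Z)$; by Proposition~\ref{pr:OC-classical} those monomials form a $\Z$-basis of the target, so each diagonal block of $B$ is an invertible integer matrix. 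A block triangular $\M$-matrix with invertible diagonal blocks is invertible, so $\phi$ is an isomorphism. The real work has already been done in Section~\ref{se:homology}: once Corollary~\ref{co:main-1} supplies the matching free $\M$-module structure and Proposition~\ref{pr:negative-W} rules out negative-degree contributions, the block-triangular trick reduces the question to the well-understood non-equivariant calculation, and the main obstacle---namely, lifting a comparison of abstract $\M$-bases to an actual isomorphism of rings---is handled exactly by this triangular bootstrap.
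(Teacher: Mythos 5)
Your proposal is correct and follows essentially the same route as the paper: build the algebra map from the relations of Section~\ref{se:gen-rels}, match the free $\M$-module structures from Proposition~\ref{pr:OC-algebra} and Corollary~\ref{co:main-1}, and detect invertibility through the forgetful map to singular cohomology. Your block-triangular matrix argument is a concrete phrasing of the paper's ``quotient by torsion, then induct up the degrees'' step, with Proposition~\ref{pr:negative-W} supplying the vanishing above the diagonal exactly as in the paper's analysis of the graded subring $T$.
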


\begin{proof}
We start with the equivalence of $H\mZ$-modules
\[ H\mZ\Smash \OC_k(V)_+ \he \bigWedge_i \left(H\mZ \Smash S^{i(V-1)}\right)^{c(k,k-i)}
\]
from Theorem~\ref{th:smash-descript}.  From this we get that
\begin{align*}
H^\star(\OC_k(V);\mZ)=[\OC_k(V)_+,\Sigma^\star H\mZ]&=[H\mZ\Smash\OC_k(V)_+,\Sigma^\star H\mZ]_{H\mZ}\\
&\iso \bigoplus_i \tH^{\star}(S^{i(V-1)})^{\oplus c(k,k-i)}\\
&= \bigoplus_i \Sigma^{i(V-1)}\M^{\oplus c(k,k-i)}.
\end{align*}
Let $R$ be the free skew-commutative $\M$-algebra generated by the classes $\omega_{st}$ subject to the relations listed in the theorem statement. By Proposition~\ref{pr:OC-algebra}, $R$ is a free $\M$-module with the same number of basis elements in the same degrees as $H^\star(\OC_k(V);\mZ)$, and so these are isomorphic free $\M$-modules. Furthermore, the work in Section~\ref{se:gen-rels} gives us a map of $\M$-algebras $f\colon R\ra H^\star(\OC_k(V);\mZ)$.  We just need to show $f$ is an isomorphism.  The difficulty here is that we have two free bases in the picture: one for $R$ consisting of products of $\omega_{st}$ classes, and one for $H^\star(\OC_k(V);\mZ)$ coming from Theorem~\ref{th:smash-descript}, which is a basis we know almost nothing about.  The nontrivial part of the proof involves watching these two bases interact. 

It is enough to prove that $f$ maps a free basis in the domain to a free basis in the target.  
All basis elements are in degrees $j(V-1)$ for $j=0,1,\dots, k-1$, so we can focus on these degrees.  Let $T=\bigoplus_{j\in \Z} \M^{j(V-1)}$, regarded as a $\Z$-graded subring of $\M$.  
If $j<0$ then $\M^{j(V-1)}=0$ by Proposition~\ref{pr:negative-W}, and $T^0=\M^0=\Z$. If $j>0$ then $T^j=\M^{j(V-1)}\iso \Z/d(V-1)$ by Propositions~\ref{pr:a-group} and \ref{pr:euler-mult}.  In fact those results show that $T=\Z[a_{V-1}]/(d(V-1)\cdot a_{V-1})$.  Observe that the $\Z$-graded subrings $R^{*(V-1)}\subseteq R$ and $H^{*(V-1)}(\OC_k(V);\mZ)\subseteq H^\star(\OC_k(V);\mZ)$ are naturally graded $T$-modules, and $f$ is a map of $T$-modules.  

Using the $\M$-module decomposition of $H^{\star}(\OC_k(V);\mZ)$ given above, we have that
\begin{align*}
H^{j(V-1)}(\OC_k(V);\mZ)&\iso \left(\bigoplus_{i=0}^{k-1} \Sigma^{i(V-1)}\M^{\oplus c(k,k-i)}\right)^{j(V-1)}\\
&= \bigoplus_{i=0}^{k-1} \left(\M^{(j-i)(V-1)}\right)^{\oplus c(k,k-i)}\\
&= \bigoplus_{i=0}^{k-1} \left( T^{j-i} \right)^{\oplus c(k,k-i)}.
\end{align*}
Using our description of $T$ we can therefore rewrite the above isomorphism as
\begin{equation}
\label{eq:decomp}
H^{j(V-1)}(\OC_k(V);\mZ)\iso \Z^{c(k,k-j)} \oplus (\Z/d(V-1))^{\oplus (c(k,k-(j+1))+\dots+c(k,1))}.
\end{equation}

Write $f'$ for $f$ restricted to the degrees $*(V-1)$, regarded as a map of graded $T$-modules.  For convenience denote the domain and codomain of $f'$ by $M$ and $N$.  Note that $M$ and $N$ are both free and finitely-generated, with generators in the same degrees, and that $T$ is non-negatively graded.  
The proof from here goes as follows:
\begin{enumerate}[(1)]
\item We prove that $f'$ is an isomorphism if we quotient out by the torsion.
\item Quotienting by the torsion is the same as quotienting by the ideal $I\subseteq T$ of positive elements, so (1) says that $f'\colon M/IM\ra N/IN$ is an isomorphism.
\item Standard commutative algebra then implies that $f'$ is an isomorphism (e.g., prove this by inducting up the degrees).
\end{enumerate}

The only part that needs further justification is (1).  For this, consider the triangle of ring maps
\[ \xymatrix{
R \ar[r]^-f\ar[dr] &H^\star(\OC_k(V);\mZ) \ar[d]^\psi \\
& H^*_{sing}(\OC_k(V);\Z).
}
\]
The diagonal map takes the generators $\omega_{st}$ to the corresponding singular cohomology clasess. We know this map is surjective because we know the $\omega_{st}$ classes generate the singular cohomology. We similarly know $\psi$ is surjective.  Note that all of the torsion summands in (\ref{eq:decomp}) map to zero under $\psi$, since the codomain is torsion-free. In a specific degree $j(V-1)$, our triangle looks like
\begin{equation} 
\label{eq:triangle}
\xymatrix{
\Z^{c(k,k-j)}\oplus \text{torsion} \ar[r]^f\ar@{->>}[dr] & \Z^{c(k,k-j)}\oplus \text{torsion} \ar@{->>}[d]^\psi \\
& \Z^{c(k,k-j)}.
}
\end{equation}
After quotienting by the torsion subgroups the diagonal and vertical maps become isomorphisms, so $f$ becomes an isomorphism as well.
\end{proof}

\subsection{The case $V\not\supseteq 1$}
\label{se:ring-special-case}
The first difficulty we encounter here is that $H^\star(\OC_k(V);\mZ)$ is not a free $\M$-module.  It is a direct sum of shifted copies of $H^\star(S(V);\mZ)$, but we don't actually have a nice description of the cohomology of $S(V)$.  As one example, if $G=C_2$ and $V=\R^n$ with the antipodal action then $H^\star(S(V))$ is not a finitely-generated $\M$-module.  So describing these rings with generators and relations is in some sense the wrong thing to do.  

It is tempting to replace the ground ring $\M$ with $H^\star(S(V);\mZ)$ and make all descriptions relative to that.  This can be done (at least in some cases), but it is somewhat unnatural.  First of all we must choose a ``reference map'' $\OC_k(V)\ra S(V)$.  We have all of the $\omega_{ij}$ for $1\leq i\neq j\leq k$, but choosing one from this list feels arbitrary.  But okay, let us reluctantly choose the $\omega_{12}$ map as our fixed  reference.  

As an extended example let us now focus on $\OC_3(2\sigma)$, whose additive structure is given in Figure~\ref{fig:H-OC} from Section~\ref{se:homology}.  Note that we have not yet chosen specific generators for the red terms in $H^{2\sigma-1}$ (all cohomology groups in this discussion are of the space $\OC_3(2\sigma)$).  We will use the forgetful map $\psi\colon H^{2\sigma-1}\ra H_{sing}^1$ to help with this. 

Observe the Euler class $a_\sigma$ fits into a cofiber sequnce $S^0\overset{a_\sigma}{\longrightarrow} S^{\sigma}\longrightarrow C_{2+}\Smash S^1$. Smashing with $\OC_3(2\sigma)$ then yields the standard forgetful long exact sequence
\[ \xymatrixcolsep{1.5pc}\xymatrix{
\cdots \ar[r] &  H^0_{sing} \ar[r] & H^{\sigma-1} \ar@{=}[d]\ar[r]^{a_\sigma} & H^{2\sigma-1} \ar@{=}[d]\ar[r]^{\psi} & H^1_{sing} \ar[r]\ar@{=}[d] & H^\sigma \ar@{=}[d]\ar[r]& H^{2\sigma} \ar[r]\ar@{=}[d] & \cdots \\
&&0 & \Z^3 & \Z^3 & \Z/2 & \Z^2
}
\]
So $\psi$ is an inclusion where the image has index $2$.  A little work shows that 
\[ \im(\psi)=\langle 
\omega_{12}-\omega_{13},
\omega_{12}-\omega_{23},
2\omega_{12}\rangle.
\]
Note that these generators are not canonical; it would be better to describe $\im(\psi)$ as being spanned by all the differences of $\omega$-classes together with all doubles of $\omega$-classes. However, we have chosen generators that are convenient with respect to our original ``reference frame'' of $\omega_{12}$.   Since $\psi(\iota)=2\omega_{12}$, we can fix the two red generators $A,B\in H^{2\sigma-1}$ by requiring that they map to $\omega_{12}-\omega_{13}$ and $\omega_{12}-\omega_{23}$.

To understand products we need to look at $\psi\colon H^{4\sigma-2}\ra H^2$, and again access this via the forgetful long exact sequence:
\[ \xymatrix{
\cdots \ar[r] &  H^1_{sing} \ar[r] & H^{3\sigma-2} \ar@{=}[d]\ar[r]^{a_\sigma} & H^{4\sigma-2} \ar@{=}[d]\ar[r]^{\psi} & H^2_{sing} \ar[r]\ar@{=}[d] & H^{3\sigma-1} \ar@{=}[d]\ar[r]& \cdots \\
&&\Z/2 & \Z^2 & \Z^2 & (\Z/2)^2
}
\]
Since $\psi$ is injective we can use it to detect products.  Note that $\psi(A)$ and $\psi(B)$ are degree $1$ and therefore square to zero, so $A^2$ and $B^2$ must also vanish.  We also compute
\begin{align*}
\psi(AB)=\psi(A)\psi(B)=(\omega_{12}-\omega_{13})(\omega_{12}-\omega_{23})&=-\omega_{13}\omega_{12}-\omega_{12}\omega_{23}+\omega_{13}\omega_{23}\\
&= -\omega_{31}\omega_{12}-\omega_{12}\omega_{23}-\omega_{23}\omega_{31}\\
&=0.
\end{align*}
So $AB$ must be zero, and likewise for $BA$ by skew-commutativity.

Likewise, $\psi(\iota A)=\psi(\iota)\psi(A)=2\omega_{12}(\omega_{12}-\omega_{13})=-2\omega_{12}\omega_{13}$ and $\psi(\iota B)=-2\omega_{12}\omega_{23}$.  Since $H^2_{sing}$ is generated by $\omega_{12}\omega_{13}$ and $\omega_{12}\omega_{23}$, and the quotient by $\im(\psi)$ is $(\Z/2)^2$, this calculates that $\im(\psi)$ is generated by $2\omega_{12}\omega_{13}$ and $2\omega_{12}\omega_{23}$.  So $\iota A$ and $\iota B$ may be taken to be the generators in this degree.  

Putting everything together, we have established the following:

\[ H^\star(\OC_3(2\sigma);\mZ)\iso \BB_2[\iota,A,B]/(A^2,B^2,AB,a_\sigma \iota,\iota^2) 
\]
where as usual the polynomial ring is interpreted to be skew-commutative and not strictly commutative.    The forgetful map has $\psi(\iota)=2\omega_{12}$, $\psi(A)=\omega_{12}-\omega_{13}$, and $\psi(B)=\omega_{12}-\omega_{23}$.  

The above example generalizes to the following result:

\begin{prop}
Let $G=C_2$ and let $\BB_{r}=\M[u^{-1}]/(a_\sigma^{r})$.  For $n\geq 2$ let $T\subseteq H^*_{sing}(\OC_k(\R^n))$ be the subring generated by the elements $\omega_{ij}-\omega_{kl}$ and $2\omega_{ij}$ (for any $i,j,k,l$).  Then there is an isomorphism
\[ H^\star(\OC_k(n\sigma);\mZ)\iso \BB_{n}\tens T
\]
where the generators of $T$ listed above are regarded as lying in degree $n\sigma-1$ and the ring structure on the tensor product is the skew-commutative one.
\end{prop}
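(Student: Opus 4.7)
The plan is to generalize the $\OC_3(2\sigma)$ calculation preceding the proposition, using the forgetful map $\psi$ to singular cohomology to detect the ring structure. The key inputs are Corollary~\ref{co:main-2} (which applies because $n\sigma$ satisfies the vanishing requirement: Proposition~\ref{pr:VR-examples} for $n\geq 3$ and Proposition~\ref{pr:V=2} for $n=2$) together with the description $H^\star(S(n\sigma);\mZ)=\BB_n[\iota]/(a_\sigma\iota,\iota^2)$.

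First I set up the additive picture. Corollary~\ref{co:main-2} gives the $\M$-module splitting
\[
H^\star(\OC_k(n\sigma);\mZ)\ \cong\ \bigoplus_{j=0}^{k-1}\Sigma^{j(n\sigma-1)}\bigl(\BB_n[\iota]/(a_\sigma\iota,\iota^2)\bigr)^{\oplus a(k,j)}.
\]
Let $R=\bigoplus_{\ell\geq 0}H^{\ell(n\sigma-1)}(\OC_k(n\sigma);\mZ)$ denote the integer-graded subring. Collecting contributions from the right-hand side in each degree $\ell(n\sigma-1)$ gives total $\Z$-rank $a(k,\ell)+a(k,\ell-1)=c(k,k-\ell)$, matching the rank of $T^{\ell(n-1)}\subseteq H^{\ell(n-1)}_{sing}(\OC_k(\R^n))$; the latter subgroup is full-rank because $T$ contains every $2\omega_{ij}$, hence every $2^\ell$-multiple of a degree-$\ell$ monomial. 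In particular $R$ is $\Z$-torsion-free.

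Next I identify $R$ with $T$ via $\psi$. The forgetful long exact sequence induced by $C_{2+}\to S^0\xrightarrow{a_\sigma}S^\sigma$ reads
\[
\cdots\to H^{\alpha-\sigma}\xrightarrow{\cdot a_\sigma}H^{\alpha}\xrightarrow{\psi}H^{|\alpha|}_{sing}\to H^{\alpha-\sigma+1}\to\cdots .
\]
Specialized to $\alpha=\ell(n\sigma-1)$, the additive splitting describes the flanking groups explicitly and shows that multiplication by $a_\sigma$ vanishes on the relevant summands, so a diagram chase (modeled on the $\OC_3(2\sigma)$ argument) yields injectivity of $\psi|_R$. For the image, realize the $j=0$-summand's semi-fundamental class as $\tilde\omega_{12}^*(\iota_{n\sigma-1})$, which maps under $\psi$ to $\pm d(n\sigma)\omega_{12}=\pm 2\omega_{12}$; the base classes of the $j\geq 1$ shifted summands may be chosen to map to differences $\omega_{ij}-\omega_{kl}$, paralleling how $A$ and $B$ arose in the $\OC_3(2\sigma)$ computation from the Pontryagin--Thom collapse maps in the inductive splitting. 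Since $\psi$ is a ring map and $T$ is a subring, $\im(\psi|_R)\subseteq T$; combined with the rank match this forces $R\cong T$ as rings, and $\psi$-injectivity transports all multiplicative relations of $T$ verbatim to $R$.

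Finally I assemble the $\M$-algebra structure. The relations $a_\sigma\iota=0$ and $\iota^2=0$ hold automatically: the first because $\iota$ corresponds to $2\omega_{12}$ and $2a_\sigma=0$ in $\BB_n$, the second by $\psi$-injectivity combined with $(2\omega_{12})^2=0$ in singular cohomology. The additive splitting shows $H^\star(\OC_k(n\sigma);\mZ)$ is generated over $\BB_n$ by $R$, so the canonical map $\BB_n\tens T\to H^\star(\OC_k(n\sigma);\mZ)$ (with tensor product equipped with the skew-commutative sign rule on total degrees) is a surjective $\BB_n$-algebra homomorphism; a degreewise comparison using the matching $\M$-module decompositions on both sides establishes it is an isomorphism. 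The main obstacle is verifying $\im(\psi|_R)=T$ uniformly in all degrees $\ell$: for $\ell=1$ this is essentially the $\OC_3(2\sigma)$ calculation, whereas for $\ell\geq 2$ it requires both the explicit inductive construction of generators coming from Theorem~\ref{th:splitting} and the Stirling-number identity $a(k,\ell)+a(k,\ell-1)=c(k,k-\ell)$, which follows directly from the alternating-sum definition of $a(k,\ell)$.
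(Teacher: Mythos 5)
The paper gives no written proof of this proposition (it is stated as the generalization of the $\OC_3(2\sigma)$ example), so your task was to supply the missing argument; your outline follows the intended strategy, and the additive bookkeeping is sound: the identity $a(k,\ell)+a(k,\ell-1)=c(k,k-\ell)$, the torsion-freeness of $R=\bigoplus_\ell H^{\ell(n\sigma-1)}$ (which, since $2a_\sigma=0$, immediately kills the image of multiplication by $a_\sigma$ and gives injectivity of $\psi|_R$), and the final assembly of the $\BB_n$-algebra structure from the module splitting are all fine.

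The genuine gap is the step you yourself flag as ``the main obstacle'': identifying $\im(\psi|_{R^\ell})$ with $T^{\ell(n-1)}$. Neither containment is actually established. The inclusion $\im(\psi|_R)\subseteq T$ does not follow from ``$\psi$ is a ring map and $T$ is a subring'' unless you already know that $R$ is generated \emph{as a ring} by elements landing in $T$ --- but the splitting of Theorem~\ref{th:splitting} is only an $H\mZ$-module splitting and supplies no ring generators, so this is circular. For the reverse inclusion, the classes $\tilde{\omega}_{12}^*(\iota)$ do hit $\pm 2\omega_{12}$, but the assertion that ``the base classes of the $j\geq 1$ shifted summands may be chosen to map to differences $\omega_{ij}-\omega_{kl}$'' is unproved (and for $j\geq 2$ is not even degree-consistent: those base classes live over $H^{j(n-1)}_{sing}$, not $H^{n-1}_{sing}$); in the $\OC_3(2\sigma)$ example the classes $A,B$ were \emph{found} by computing $\im(\psi)$ exactly from the forgetful long exact sequence, not constructed geometrically. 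Finally, even granting one containment, ``combined with the rank match this forces $R\cong T$'' is false as logic: two full-rank subgroups of $\Z^{c(k,k-\ell)}$, one containing the other, need not be equal ($2\Z\subseteq\Z$). What is needed is an index computation: show that the cokernel of $\psi\colon H^{\ell(n\sigma-1)}\to H^{\ell(n-1)}_{sing}$, which the long exact sequence identifies with a subgroup of $H^{\ell(n\sigma-1)+1-\sigma}(\OC_k(n\sigma))$, has the same order as $H^{\ell(n-1)}_{sing}/T^{\ell(n-1)}$, and separately exhibit enough classes in $R^\ell$ mapping into $T$ to conclude equality. This is exactly the computation carried out by hand for $\OC_3(2\sigma)$, and your proposal does not show how to perform it uniformly in $k$, $n$, and $\ell$.
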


\section{Comparing configuration spaces}
\label{se:comparing}
The ring presented in Theorem~\ref{th:main1} can be regarded as a deformation of the classical singular cohomology of the configuration space $\OC_k(\R^n)$, with the symbol $a_{V-1}$ as the deformation parameter.  The paper \cite{VG} introduced and studied a closely related ring, namely the ring of functions $\Func(\Sigma_k,\Z)$ (actually this is just one example of the rings studied in \cite{VG}, which worked in the more general context of hyperplane arrangements).  Varchenko and Gelfand proved that $\Func(\Sigma_k,\Z)$  can be presented as the quotient of $\Z[e_{ij}\,|\, 1\leq i\neq j\leq k]$ by the following relations:
\begin{itemize}
    \item 
    $e_{ij}=1-e_{ji}$, 
    \item $e_{ij}^2=e_{ij}$,
    \item $e_{ij}e_{jk}+e_{jk}e_{ki}+e_{ki}e_{ij}=e_{ij}+e_{jk}+e_{ki}-1$.
\end{itemize}
The similarity between the above presentation and the one from Theorem~\ref{th:main1} is transparent, but why are these two rings related?  

Essentially the same question has been answered by 
Proudfoot and his collaborators in the papers \cite{P}, \cite{M}, and \cite{DPW} (working in the cases of $\Z/2$-, $\Q$-, and $\Z$-coefficients, respectively).  They explain the relation between three things: the Varchenko-Gelfand ring, the cohomology ring of $\OC_k(\R^n)$, and the Borel-equivariant cohomology of spaces $\OC_k(V)$ for certain choices of group $G$ (either $C_2$ or $S^1$) and representation $V$.  

In this section we will review some of this story and retell it in the context of Bredon cohomology, using the results of our computations.  On the one hand, the Bredon story does not contain any fundamentally new ideas or improve on the Borel story in any significant way (in fact in some ways our version is worse, as we will see).  On the other hand, the Bredon version has a few interesting aspects that demonstrate some features
of the theory.

The heart of the matter involves the inclusions $\OC_k(V)\inc \OC_k(V\oplus W)$, for various representations $V$ and $W$.  In (reduced) singular cohomology these maps are always zero, but in equivariant cohomology they can be nonzero.  The space $\OC_k(\R^1)$ (where $\R^1$ has the trivial action) is homotopy equivalent to $\Sigma_k$, and so $H^*_{sing}(\OC_k(\R^1))$ is just the ring $\Func(\Sigma_k,\Z)$ concentrated in degree $0$.  We will see (following Proudfoot) that studying the inclusions $\OC_k(\R^1)\inc\OC_k(\R^1\oplus W)$ for certain $W$ leads to the desired connection between Theorem~\ref{th:main1} and the Varchenko-Gelfand ring.  

\subsection{The general comparison theorem for representations}
Fix representations $V$ and $W$ where $V\supseteq 1$.
Consider the inclusion $u\colon \OC_k(V)\inc \OC_k(V\oplus W)$.  The cohomology of the codomain has generators $\omega_{ij}(V\oplus W)$ in degree $V+W-1$, whereas for the domain the generators $\omega_{ij}(V)$ lie in degree $V-1$.  Here is the fundamental computation:

\begin{prop}
\label{pr:compare}
    $u^*(\omega_{ij}(V\oplus W))=\omega_{ij}(V)\cdot a_W$.  Consequently, if $W\supseteq 1$ then $u^*$ is the zero map on reduced cohomology.
\end{prop}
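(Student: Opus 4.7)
The strategy is to reduce the identity $u^*\omega_{ij}(V\oplus W)=\omega_{ij}(V)\cdot a_W$ to a sphere-level computation of a pullback of a fundamental class, and then carry out that computation by recognizing the relevant sphere map as $\id\Smash a_W$.

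First I would observe that the defining maps $\tilde\omega_{ij}$ are natural in $V$: there is a commutative square
\[
\xymatrix{\OC_k(V)\ar[r]^-u\ar[d]_-{\tilde\omega_{ij}(V)} & \OC_k(V\oplus W)\ar[d]^-{\tilde\omega_{ij}(V\oplus W)} \\ S(V)\ar[r]^-i & S(V\oplus W),}
\]
in which $i$ is the equator inclusion $v\mapsto(v,0)$. Both vertical maps are given by the same formula $\und{x}\mapsto(x_i-x_j)/\|x_i-x_j\|$, and commutativity is automatic since $x_i,x_j\in V$ forces the output to lie in $S(V)\subseteq S(V\oplus W)$. By definition $\omega_{ij}(V\oplus W)=\tilde\omega_{ij}(V\oplus W)^*\iota_{V\oplus W-1}$, so the desired formula reduces to the sphere-level identity
\[ i^*\iota_{V\oplus W-1}=\iota_{V-1}\cdot a_W\in H^{V\oplus W-1}(S(V)),\]
provided the chosen generators $\iota_{V-1}$ and $\iota_{V\oplus W-1}$ are taken compatibly.

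To compute $i^*\iota_{V\oplus W-1}$, I would fix a trivial line $1\subseteq V\subseteq V\oplus W$ and use stereographic projection from a pole on this $G$-fixed axis to produce compatible equivariant identifications $S(V)\iso S^{V-1}$ and $S(V\oplus W)\iso S^{(V-1)\oplus W}\iso S^{V-1}\Smash S^W$; the fundamental classes $\iota_{V-1}$ and $\iota_{V\oplus W-1}$ are taken to be the canonical classes on these representation spheres. A direct coordinate check---the equator inclusion $v\mapsto (v,0)$ is the one-point compactification of the extension-by-zero map $V-1\hookrightarrow(V-1)\oplus W$---shows that under these identifications, $i$ becomes precisely
\[ \id_{S^{V-1}}\Smash a_W\colon S^{V-1}\Smash S^0\longrightarrow S^{V-1}\Smash S^W.\]
Under the smash identification, the canonical class $\iota_{V\oplus W-1}$ factors as the exterior product $\iota_{V-1}\extens\iota_W$ (this is immediate from the definition of the canonical inclusion $S^\alpha\hookrightarrow AG(S^\alpha)$ combined with the multiplicative structure of $H\mZ$). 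Pulling back and using that $a_W^*\iota_W=a_W\in\M^W$ is the tautology defining the Euler class as a cohomology element, one gets
\[(\id\Smash a_W)^*(\iota_{V-1}\extens\iota_W)=\iota_{V-1}\extens(a_W^*\iota_W)=\iota_{V-1}\cdot a_W,\]
which is exactly what is needed. Applying $\tilde\omega_{ij}(V)^*$ and using that $a_W\in\M$ pulls back to itself then delivers the stated formula.

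For the ``consequently'' clause, if $W\supseteq 1$ then $a_W=0$ by Proposition~\ref{pr:a-group}, so $u^*\omega_{ij}(V\oplus W)=0$ for every pair $i\neq j$. By Theorem~\ref{th:main1}, $H^\star(\OC_k(V\oplus W);\mZ)$ is generated as an $\M$-algebra by these $\omega_{ij}$, all in the nonzero $RO(G)$-degree $V\oplus W-1$, so killing each generator kills the entire augmentation ideal over $\M$, which is precisely the claim that $u^*$ vanishes on reduced cohomology.

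The one step I expect to require real care is pinning down signs in the factorization $\iota_{V\oplus W-1}=\iota_{V-1}\extens\iota_W$: the $RO(G)$-graded ring structure of Section~\ref{se:RO(G)} fixes this only up to the $\phi$-identifications and the skew-commutativity rule \eqref{eq:skew-commute}, so one should check that the ordering of the irreducible basis of $RO(G)$ and the orientation in the stereographic trivialization combine to give overall sign $+1$. This bookkeeping is routine but non-automatic and is where the main obstacle lies.
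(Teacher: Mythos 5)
Your proposal is correct and follows essentially the same route as the paper: the naturality square relating $\tilde\omega_{ij}(V)$ and $\tilde\omega_{ij}(V\oplus W)$ reduces everything to the sphere-level identity $i^*\iota_{V\oplus W-1}=\iota_{V-1}\cdot a_W$, which the paper isolates as Lemma~\ref{le:spheres} and proves exactly as you do, by recognizing the equator inclusion as $\id\Smash a_W$ and factoring the fundamental class as a smash product of $\iota$'s. Your extra care about the generation statement for the ``consequently'' clause and about sign bookkeeping is reasonable but does not change the argument.
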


\begin{proof}
This is a consequence of the commutative diagram
\[ \xymatrixcolsep{4pc}\xymatrix{
\OC_k(V\oplus W) \ar[r]^-{\tilde{\omega}_{ij}(V\oplus W)} & S(V\oplus W) \\
\OC_k(V) \ar@{ >->}[u] \ar[r]^-{\tilde{\omega}_{ij}(V)} & S(V).\ar@{ >->}[u]
}
\]
Since $V\supseteq 1$ we have $S(V)\iso S^{V-1}$ and $S(V\oplus W)\iso S^{V-1+W}$.  So the result follows formally from Lemma~\ref{le:spheres} below.
\end{proof}

\begin{lemma}
\label{le:spheres}
Let $K$ and $L$ be representations where $K\supseteq 1$.  Then one has
\[ j^*(\iota_{K\oplus L})=\iota_K\cdot a_L
\] where $j\colon S^K\inc S^{K\oplus L}$ is the evident inclusion.
\end{lemma}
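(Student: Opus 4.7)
The plan is to factor $j$ through the smash product $S^K\wedge S^L$, recognize $\iota_{K\oplus L}$ as an external cup product, and then read off the result by combining the two.

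First I would unpack the inclusion $j$. Under the canonical identification $S^{K\oplus L}\iso S^K\Smash S^L$, the map $j$ sending $x\in S^K$ to $(x,0)$ factors as
\[ S^K = S^K\Smash S^0 \xrightarrow{\ \id\Smash a_L\ } S^K\Smash S^L, \]
where $a_L\colon S^0\to S^L$ is the Euler class map (sending the nonbasepoint to $0\in L$), exactly as in the definition given just before Proposition~\ref{pr:a-group}. Unwinding the definitions, this factorization is immediate; it is entirely analogous to the identification of the diagonal $\Delta$ used in the proof of Proposition~\ref{pr:H-S^W}.

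Second, I would identify $\iota_{K\oplus L}$ as an external product. The composite
\[ S^K\Smash S^L \xrightarrow{\ \iota_K\Smash \iota_L\ } AG(S^K)\Smash AG(S^L) \xrightarrow{\ \mu\ } AG(S^{K\oplus L}) \]
coincides with the canonical inclusion $\iota_{K\oplus L}\colon S^{K\oplus L}\inc AG(S^{K\oplus L})$, because the ring-spectrum multiplication $\mu$ on $H\mZ$ is by construction induced at the level of representing spaces by the canonical pairing $AG(S^K)\Smash AG(S^L)\to AG(S^{K\oplus L})$ that lifts the smash product $S^K\Smash S^L\to S^{K\oplus L}$.

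Combining these two observations, we get
\[ j^*(\iota_{K\oplus L}) = \mu\circ(\iota_K\Smash \iota_L)\circ(\id\Smash a_L) = \mu\circ\bigl(\iota_K\Smash (\iota_L\circ a_L)\bigr). \]
The composite $\iota_L\circ a_L\colon S^0\to S^L\to AG(S^L)$ is, essentially by definition, the Euler class $a_L\in H^L(\pt)$. Comparing with the formula for the product in Section~\ref{se:RO(G)} (the case of multiplying a class on $S^K$ by a class pulled back from $\pt$), the right-hand side is exactly $\iota_K\cdot a_L$, which is what we wanted. The main thing to be careful about is the coherence isomorphisms $\phi$ of Section~\ref{se:RO(G)}: since all representations that appear ($K$, $L$, $K\oplus L$) enter with positive sign and no commutations of irreducibles with odd dimensions are forced on us, no signs should be introduced, but this bookkeeping is the one nontrivial thing to verify.
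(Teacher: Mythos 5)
Your proposal is correct and is essentially identical to the paper's proof, which consists of exactly the commutative diagram encoding your two observations: $j$ factors as $\id\Smash a_L$ under $S^{K\oplus L}\iso S^K\Smash S^L$, and $\iota_{K\oplus L}$ is the composite $\mu\circ(\iota_K\Smash\iota_L)$. Nothing further is needed.
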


\begin{proof}
Use the commutative diagram
\[ \xymatrixcolsep{3.5pc}\xymatrix{
S^K\ar@{ >->}[r]^-j & S^{K\oplus L} \ar[r]^-{\iota_{K\oplus L}} & AG(S^{K\oplus L}) \\
S^K\Smash S^0 \ar@{=}[u]\ar[r]^-{\id\Smash a_L} & S^K\Smash S^L \ar[r]^-{\iota_K\Smash \iota_L} & AG(S^K)\Smash AG(S^L). \ar[u]_-\mu
}
\]
\end{proof}

When $W\not\supseteq 1$ the map $u^*$ from Proposition~\ref{pr:compare} has the potential to be nonzero and to contain useful information.  We will see some examples below.

\subsection{The Varchenko-Gelfand ring}
\label{se:V-G}
Let $H_{ij}\subseteq \R^k$ be the hyperplane defined by $x_i=x_j$.  Then $\OC_k(\R)=\R^k-\bigcup_{i,j} H_{ij}$, and this space is a union of contractible components which are in bijective correspondence with elements of $\Sigma_k$: the component of a configuration $\und{x}$ is indexed by the unique permutation $\sigma$ that puts the elements of $\und{x}$ in ascending order, in the sense that $x_{\sigma^{-1}(1)}<x_{\sigma^{-1}(2)}<\cdots$.  (The use of $\sigma^{-1}$ in the subscripts provides that $x_i<x_j$ implies $\sigma(i)<\sigma(j)$.)  
Varchenko-Gelfand \cite{VG} consider the ring of locally constant integer-valued functions on $\OC_k(\R)$, which by the above bijection is the same as the ring $\Func(\Sigma_k,\Z)$.  It is also $H^0(\OC_k(\R);\Z)$.  

For each $i\neq j$ \cite{VG} defines the \dfn{Heaviside function} $e_{ij}$ to have the value $1$ on all elements of $\R^k$ on the side of $H_{ij}$ where $x_i>x_j$, and to take the value $0$ on the other side of $H_{ij}$.  The relations
\[ e_{ij}^2=e_{ij}, \qquad e_{ij}=1-e_{ji}\]
are self-evident.   If we instead think of these as functions $\Sigma_k\ra \Z$, then for $\sigma\in \Sigma_k$ the above says that $e_{ij}(\sigma)$ is $1$ if $\sigma(i)>\sigma(j)$ and $0$ otherwise.  

Observe that $e_{ij}$ basically coincides with the cohomology class $\omega_{ij}\in H^0(\OC_k(\R))$ that we have previously defined, if one takes $-1\in S(\R)$ as the basepoint so that it represents $0$ in $AG(S(\R))$.  

The analog of the Arnold relation is proven as follows.  Given a permutation $\sigma$, if $\sigma(i)<\sigma(j)$ and $\sigma(j)<\sigma(k)$ then it must be the case that $\sigma(i)<\sigma(k)$.  This shows that the product $e_{ji}e_{kj}e_{ik}$ must be the zero function.  Rewrite this as
\begin{align*}
    0=(1-e_{ij})(1-e_{jk})(1-e_{ki})=1-e_{ij}- & e_{jk}-e_{ki}+e_{ij}e_{jk}+e_{jk}e_{ki}+e_{ki}e_{ij} \\
    &-e_{ij}e_{jk}e_{ki}
\end{align*}
and then use that the final cubic term must be zero by exactly the same reasoning as before (we are grateful to Nick Proudfoot for explaining this to us).

In \cite{VG} it is proven that $\Func(\Sigma_k,\Z)$ is the quotient of the polynomial ring $\Z[e_{ij}]$ by the three types of relations listed above. 
 Here we have produced the map $\Z[e_{ij}]/\!\!\sim \lra \Func(\Sigma_k,\Z)$. One way to justify that this is an isomorphism is 
via the following steps:
\begin{itemize}
\item Check by the same argument as in the proof of Proposition~\ref{pr:OC-algebra} that the domain is a free abelian group whose rank is $k!$, as is true for the codomain.
\item Prove surjectivity as follows: For $\sigma\in \Sigma_k$ let $\delta_\sigma\colon \Sigma_k\ra
\Z$ send $\sigma$ to $1$ and all other permutations to zero.  Then
the $\delta_\sigma$ functions are a $\Z$-basis for $\FF(\Sigma_k,\Z)$.    
For a given $\sigma$ let $P_\sigma$ be the product of $\binom{k}{2}$
factors, where for every $i<j$ we include the factor $e_{ij}$ if
 $\sigma(i)>\sigma(j)$, and the factor $1-e_{ij}$ if $\sigma(i)<\sigma(j)$.  As an example, for the permutation $\sigma=\begin{pmatrix} 1 & 2 & 3 \\ 2 & 3 & 1 \end{pmatrix}$
we
would have
\[P_\sigma=(1-e_{12})e_{13}e_{23}.
\]
Observe that $P_\sigma(\sigma)=1$.
If $\alpha$ is another permutation then $P_\sigma(\alpha)=0$ unless
the relative order of all pairs in $\alpha$ exactly matches the
relative order in $\sigma$---but this can only happen if $\alpha=\sigma$.  So
$P_\sigma=\delta_\sigma$, and this proves surjectivity.
\end{itemize}

Varchenko and Gelfand considered the increasing filtration $F_*$ of the ring $R=\Func(\Sigma_k,\Z)$ where $F_r$ is spanned by monomials in the $e_{ij}$ of degree less than or equal to $r$.  Thus we have
\[ \Z\langle 1\rangle =F_0 \subseteq F_1 \subseteq F_2 \subseteq \cdots \subseteq F_k=R\]
and $F_r\cdot F_s\subseteq F_{r+s}$.  Therefore $\gr R$ is a graded ring, and they observed that $\gr R\iso H^*_{sing}(\OC_k(\R^n))$ with $F_i/F_{i-1}\iso H^{i(n-1)}_{sing}(\OC_k(\R^n))$ (really they did this for $n=2$, though the observation is of course valid for higher $n$ as well).

For a filtered ring as above, the \dfn{Rees ring} $\Rees(R)$ is the subring $\bigoplus_i t^iF_i\subseteq R[t]$.  One has $\Rees(R)/(t)\iso \gr R$ and $\Rees(R)/(t-1)\iso R$.  We depict this in diagram form as
\[ \xymatrix{
& \Rees(R) \ar@{->>}[dl]_{t=1} \ar@{->>}[dr]^{t=0}\\
R && \gr(R).
}
\]
The relations among the $e_{ij}$ classes become the following homogeneous relations that hold in $\Rees(R)$:
\begin{align*}
    & e_{ij}=1-e_{ji} \rightsquigarrow [te_{ij}]=t-[te_{ji}]\\
    & e_{ij}^2=e_{ij} \rightsquigarrow [te_{ij}]^2=t\cdot [te_{ij}]\\
    & e_{ij}e_{jk}+e_{jk}e_{ki}+e_{ki}e_{ij}=e_{ij}+e_{jk}+e_{ki}-1\rightsquigarrow\\
&\qquad\qquad\qquad    [te_{ij}][te_{jk}]+[te_{jk}][te_{ki}]+[te_{ki}][te_{ij}]=t[te_{ij}]+t[te_{jk}]+t[te_{ki}]-t^2.    
\end{align*}
Note that we are writing $[te_{ij}]$ for the evident class in $\Rees(R)_1$ to make it clear that this is not a multiple of $t$ in the Rees ring.

The reader will note that these relations in the Rees ring are almost the same as those in Theorem~\ref{th:main1}---with $t$ replaced by $a_{V-1}$---though there are some sign differences that we will discuss in the next section.

\subsection{The denouement}
\label{se:denouement}
Let $W$ be a representation such that $G$ acts freely on $W-\{0\}$.  Examples to keep in mind are when $G=\Z/2$ and $W=\R$ with the sign action, and when $G=C_n$ and $W=\R^2$ with the generator of $G$ acting as counterclockwise rotation by $\frac{2\pi}{n}$ radians.  Note that if $W$ exists then $d(W)=\#G$, where $d(W)$ is the greatest common divisor defined in Proposition~\ref{pr:a-group}, and so by Proposition~\ref{pr:a_odd} if $\#G>2$ then $W$ must be even-dimensional.

We will consider the inclusion $u\colon \OC_k(\R)\inc \OC_k(\R\oplus W)$, where the $G$-action on $\R$ is the trivial one.  So $u$ is the inclusion of the fixed set.  
Consider the diagram
\[ \xymatrix{
& H^\star(\OC_k(\R\oplus W)) \ar[dr]^\psi\ar[dl]_{u^*} \\
H^\star(\OC_k(\R))  && H^*_{sing}(\OC_k(\R\oplus W)) \\
H^*_{sing}(\OC_k(\R))\tens \M \ar[u]_\iso \ar@{=}[r] & \Func(\Sigma_k,\Z)\tens \M.
}\]
The vertical isomorphism is from Proposition~\ref{pr:trivial-action}. 
The ring map $\psi$ sends the generators $\omega_{ij}(\R\oplus W)$ to the classical generators $\omega_{ij}$, and so $\psi$ is surjective.  The element $e_{ij}\tens 1\in \Func(\Sigma_k,\Z)\tens \M$ maps to $\omega_{ij}(\R)$ in $H^\star(\OC_k(\R))$ by inspection, and we know $u^*(\omega_{ij}(\R\oplus W))=a_W\cdot \omega_{ij}(\R)$ by Proposition~\ref{pr:compare}.

All of the interesting phenomena is in the degrees $nW$ for $n\in \Z$, so we restrict our attention to those.  In fact the groups are zero when $n<0$ (this uses Proposition~\ref{pr:negative-W}), so we focus on $n\geq 0$.  The following table shows the additive generators of $H^\star(\OC_k(\R\oplus W))$ in these degrees:

\vspace{0.1in}
\begin{center}
\begin{tabular}{|c|c|c|c|c|}
\hline
0 & 1 & 2 & 3 & $\cdots$\\
\hline\hline
$1$ & $\omega_{ij}$ & $\omega_{ij}\omega_{mn}$  & $\omega_{ij}\omega_{mn}\omega_{st}$ & $\cdots$\\
& $a_W$ & $a_W\omega_{ij}$ & $a_W \omega_{ij}\omega_{mn}$ &\\
&& $a_W^2$ & $a_W^2\omega_{ij}$ &\\
&&& $a_W^3$ & \\
\hline
\end{tabular}
\end{center}
\vspace{0.1in}
\noindent
Here the $\omega_{ij}$ classes and their products are all torsion-free, whereas all classes with $a_W$ are annihilated by $d(W)$.  If one formally substitutes $\omega_{ij}=a_We_{ij}$ then this starts to look like the Rees ring for $\Func(\Sigma_k,\Z)$ associated to the Varchenko-Gelfand filtration, but there are two differences.  One small but important difference is that the class $a_W$ is torsion.  The other difference is that when $W$ is 
odd-dimensional the relations in $H^{*W}(\OC_k(\R\oplus W))$ relating $\omega_{ij}$ to $\omega_{ji}$ have signs that differ from those in the Rees ring---however, remember that this case only occurs when $G=C_2$ and $d(W)=2$.

We can redraw our diagram as follows:

\[ \xymatrix{
& H^{*W}(\OC_k(\R\oplus W)) \ar@{->>}[dl]_{a_W=1}\ar@{->>}[dr]^{a_W=0} \\
\Func(\Sigma_k,\Z/d(W)) && H^*_{sing}(\OC_k(\R\oplus W)).
}
\]
The object on the bottom left is purely combinatorial, whereas the one on the bottom right is topological; we will use the diagram to pass information between them.  
Whether or not the object on top is isomorphic to the Rees ring  is immaterial to the rest of our discussion, but it plays a comparable role.  We can regard this object as a graded module  over $R=\Z[a_W]/(d(W)a_W)$, and as such it is a free module.  Let $b_i$ denote the number of generators of rank $i$, and pretend for the moment that we don't know these numbers.   

The two arrows are the result of applying $(\blank)\tens_R R/(a_W-1)$ and $(\blank)\tens_R R/(a_W)$.  In the latter case, $R/(a_W)$ is a graded $R$-module and so the target is also graded (as we know).  So the $b_i$ are the same as the ranks of the singular cohomology groups.    In the former case, $R/(a_W-1)$ is not graded and we instead only get a filtration defined by letting $F_i$ be the image of the $i$th graded piece.  By inspection this is precisely the Varchenko-Gelfand filtration, and it follows formally that $F_i/F_{i+1}$ is a free $\Z/d(W)$-module of rank $b_i$.  

As a consequence of the above  we deduce that
\begin{itemize}
\item The total rank of $H^*_{sing}(\OC_k(\R\oplus W))$ is the same as the rank of $\Func(\Sigma_k,\Z/d(W))$ as $\Z/d(W)$-module, which is manifestly equal to $k!$.
\item The rank of $H^{i\cdot |W|}_{sing}(\OC_k(\R\oplus W))$ is equal to the rank of $F_i/F_{i+1}$ as a $\Z/d(W)$-module, which is a purely combinatorial object.  From here it is ``only'' a matter of combinatorics to identify this with the appropriate Stirling number.  
\end{itemize}
This is not exactly an independent calculation of $H^*_{sing}(\OC_k(\R\oplus W))$ in that it presupposes knowing the freeness of the upper object in our diagram, but it does give a satisfying explanation of how the machinery of equivariant cohomology provides combinatorial interpretations of the cohomology of $\OC_k(\R\oplus W)$.  

Following Proudfoot and his collaborators, it is tempting to push these methods to incorporate the $\Sigma_k$-actions that exist everywhere.  Certainly $\Sigma_k$ acts on the three objects in our diagram, and the action is preserved by the maps.  It would be nice to conclude, for example, that the total cohomology ring of $\OC_k(\R\oplus W)$ is isomorphic---as a $\Sigma_k$-module---to $\Func(\Sigma_k,\Z)$.  Unfortunately, the fact that $a_W$ is torsion plays against us here.  

There are a couple of approaches to removing the torsion condition on $a_W$.  
One is to look at an inverse limit system where the torsion gets larger and larger: for example, $G=C_n$ and $W=\R^2$ with the rotation-by-$\frac{2\pi}{n}$ representation, and let $n\ra \infty$.  A related---and more satisfying---approach is to take $G=S^1$ with $W=\R^2$ the analogous ``rotation'' representation.  This is the approach taken in \cite{M} and \cite{DPW} for Borel cohomology; it can also be done in the Bredon setting, but it takes us outside the context of finite groups that has been the subject of the present paper.

\appendix
\section{A proof for odd-dimensional representations}
\label{se:app}
In this appendix we give an elementary proof (avoiding Bredon cohomology) of Proposition~\ref{pr:a_odd}.  That is, we show that if $V$ is an odd-dimensional representation of $G$ over $\R$ then the gcd of $\cD(V)=\{\#(G/H)\,|\, V^H\neq 0\}$ is either $1$ or $2$. This implies the Euler class satisfies either $a_V=0$ or $2a_V=0$. We use Smith Theory as one step of the analysis, so the proof is not entirely algebraic.

\begin{proof}[Proof of Proposition~\ref{pr:a_odd}]
First observe that $d(V)=\gcd \cD(V)$ is a divisor of $\#G$ by the definition. Next we prove $d(V)$ must be a power of $2$.

Let $p$ be an odd prime dividing $\#G$ and let $H\leq G$ be a Sylow $p$-subgroup.  Smith theory implies that $\chi(S^V)\equiv \chi(S^{V^H})$ mod $p$. But $\chi(S^V)=0$ since $\dim V$ is odd, so $p$ divides $\chi(S^{V^H})$.  In particular, $V^H\neq 0$ and therefore $\#(G/H)\in \cD(V)$. Note $p$ does not divide $\#(G/H)$, and so $p$ does not divide $\gcd \cD(V)$. We have proven this for all odd primes dividing $\#G$.  Thus, $d(V)$ is a power of $2$. \medskip

We now break into cases based on the parity of $\#G$:

\medskip
\noindent\emph{Case 1:} $\#G$ is odd.

\medskip
Since $d(V)$ is a power of $2$ and $d(V)\mid \# G$, we see $d(V)=1$.

\medskip
\noindent\emph{Case 2a:} $\#G$ is a power of $2$.

\medskip
We will prove the stronger result that either $1\in \cD(V)$ or $2\in \cD(V)$.

Since $\cD(V_1\oplus V_2)=\cD(V_1)\cup \cD(V_2)$, it suffices to prove the result when $V$ is irreducible.  In that case $\Hom_G(V,V)$ is a real division algebra and so is equal to one of $\R$, $\C$, or $\HH$; $V$ is accordingly classified as ``real'', ``complex'', or ``quaternionic''.  In the latter two cases $V$ must be even-dimensional over $\R$, so in fact our $V$ is ``real''.  The complexification $V_\C$ of an irreducible ``real'' representation is also irreducible; this follows from $\End_G(V_\C)\iso \End_G(V)_\C\iso \C$.    

Since $V_\C$ is irreducible we have $\dim_\C V_\C$ divides $\#G$.  But this says $\dim V$ divides $\#G$, so $\dim V$ is a power of $2$.  Since $\dim V$ is also odd, $\dim V=1$.  The representation of $G$ on $V$ is therefore specified by a map $G\ra \GL_1(\R)=\R^*$, or really a map $G\ra \Z/2$ since the image must lie in the torsion elements.  Either this map is trivial or else the kernel is index $2$, which says that either $1\in \cD(V)$ or $2\in \cD(V)$.  So $\gcd \cD(V)$ is $1$ or $2$ here.      

\medskip

\noindent
\emph{Case 2b:} $\#G$ is even.

\medskip

Let $P$ be a Sylow $2$-subgroup of $G$.  By Case 2a, either $V^P\neq 0$ or there is an index 2 subgroup $Q\subseteq P$ such that $V^Q\neq 0$.  In the first case we have that $\#(G/P)\in \cD(V)$, therefore $\cD(V)$ contains an odd number, and since we already showed the gcd is a power of $2$ it must be exactly $1$.  In the second case we have that $\#(G/Q)\in \cD(V)$, i.e. $\cD(V)$ contains twice an odd number.  Since the gcd is a power of $2$, it can only be $1$ or $2$.  This completes the proof.
\end{proof}

\section{Verifying the vanishing requirement}\label{se:verify}
In this section we discuss the matter of checking the vanishing requirement in some specific examples.  Recall that the requirement for $(G,V)$ is that in the sequence
\begin{equation}
\label{eq:VR}H^{\ell V-\ell}(\pt)\llra{a_V} H^{(\ell+1)V-\ell}(\pt)\llra{a_V}
H^{(\ell+2)V-\ell}(\pt)
\end{equation}
the first map is surjective and the second map is injective, for all $\ell\in \Z$.  By Proposition~\ref{pr:vanishing-case1} the requirement always holds if $V^G\neq 0$ and $V\neq 2$, so here we always focus on $V^G=0$.

When $\ell=0$ the conditions are satisfied because we know all three groups by Proposition~\ref{pr:a-group} and can check it directly.  When $\ell=-1$ the middle group is zero, and so the conditions are trivially satisfied.  When $\ell=-2$ the sequence is
\[ H^{2-2V}(\pt) \llra{a_V} H^{2-V}(\pt) \llra{a_V} H^2(\pt)=0.
\]
Since the last group is zero the conditions will be satisfied if and only if the middle group is zero.  But if $V$ is orientable and $\dim V=2$ we know $H^{2-V}(\pt)\neq 0$.  For this reason we will typically only be able to verify the conditions when $\dim V\neq 2$.  

\begin{remark}
\label{re:loc}
If $\alpha\in RO(G)$ we can consider the Mackey functor $\und{H}^\alpha(\pt)$, which is a $\mZ$-module.  So the composite
\[ H^\alpha(\pt)\llra{Res} H^\alpha(G) \llra{Tr} H^\alpha(\pt)
\]
is multiplication by $\#G$.  The middle group is isomorphic to $H^{|\alpha|}_{sing}(\pt)$ and is therefore zero when $|\alpha|\neq 0$.  It follows that when $|\alpha|\neq 0$ the group $H^\alpha(\pt)$ is torsion and annihilated by $\#G$.  The index $(\ell+1)V-\ell$ of the  middle group from (\ref{eq:VR}) can have rank $0$ only when $\ell=-2$ and $\dim V=2$.  Avoiding this case therefore ensures that the middle group is annihilated by $\#G$, and as a consequence inverting primes not dividing $\#G$ does not affect the vanishing requirement.  This is sometimes useful, as we will shortly see.
\end{remark}

\medskip

In the ring $\M=H^\star(\pt;\mZ)$ the portion graded by virtual representations $W+n$ and $-W+k$ for all (non-virtual) representations $W$ and all $n,k\in \Z$ is somewhat more accessible then the entirety of $\M$.  We call this the \textbf{regular portion} of $\M$.  Note that the regular portion is not a subring.  The direct sum of terms $\M^{W+n}$ for $W$ a representation and $n\in \Z$ will be called the ``positive'' part of $\M$, and the direct sum of terms $\M^{-W+n}$ for $n\in \Z$ will be called the ``negative'' part.  Note that these terms only refer to the regular region.  Recall that $\M^{W+n}=0$ for $n> 0$ or $n<-\dim W$, and likewise $\M^{n-W}=0$ for $n\leq 0$ or $n> \dim W$ (cf. Propositions~\ref{pr:a-group} and \ref{pr:negative-W} for parts of this; the other parts follow from suspension isomorphisms and cellular homology).  So the nonvanishing groups are concentrated in positive and negative ``cones''.  

For a particular representation $V$, let $\M(V)$ be the subring of $\M$ consisting of all gradings $aV+b$ for $a,b\in \Z$.  This is contained in the regular portion of $\M$.  Observe that the vanishing hypotheses concern only these subrings, and in particular do not involve the ``irregular'' portion of $\M$.    

Our goal in this appendix is to prove Proposition~\ref{pr:VR-examples}, which says that the vanishing requirement is always satisfied if $\dim V\geq 3$ and $G$ is one of the groups $C_p$, $C_{p^2}$ ($p$ a prime), and $\Sigma_3$.  We will start with the case of the cyclic groups, and for these it will be useful to recall some basic facts about their representation theory.  

Let $C_n$ be the cyclic group with $n$ elements.
Let $\lambda(k)$ denote $\R^2=\C$ with the generator of $C_n$ acting as multiplication by $e^{\frac{2\pi i k}{n}}$.    In addition to $\lambda(k)$ only depending on $k$ modulo $n$, one also has $\lambda(k)\iso \lambda(n-k)$ (using complex conjugation for the isomorphism).  
It is known that $RO(C_n)$ has a basis consisting of $1,\lambda(1),\ldots,\lambda(\lfloor \frac{n-1}{2} \rfloor)$ and---when $n$ is even---the sign representation $\sigma$.  We take this as our ordered basis for the $RO(C_n)$-grading.  

Now we specialize to $n=p^e$, where $p$ is a prime. The following result is a small expansion of Proposition 4.25 of \cite{Z} (though our proof is different), and is also related to Proposition 2.2 of \cite{HHR3}.

\begin{prop}
\label{pr:HZ-equiv}
Let $G=C_{p^e}$ where $p$ is a prime. After inverting the primes other than $p$ there is an equivalence of $H\mZ$-modules fitting in the commutative diagram
\begin{equation}
\label{eq:HZ-equiv}
\xymatrix{
H\mZ\Smash S^{\lambda(k)}\ar[rr]^\he && H\mZ\Smash S^{\lambda(rk)}\\
& H\mZ\Smash S^0 \ar[ul]^-{\id\Smash a_{\lambda(k)}}\ar[ur]_-{\ \ \id\Smash a_{\lambda(rk)}
}}
\end{equation}
whenever $(r,p)=1$.
\end{prop}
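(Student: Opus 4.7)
The strategy is to construct the asserted equivalence from a natural geometric map and then reduce the verification to a degree computation on orbit circles.

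\emph{Construction of the candidate map.} I would take the $G$-equivariant pointed map $f\colon S^{\lambda(k)}\to S^{\lambda(rk)}$ defined on $\C$ by $z\mapsto z^r$ and on the added basepoint by $\infty\mapsto\infty$. Equivariance reduces to the identity $(e^{2\pi ik/p^e}z)^r=e^{2\pi irk/p^e}z^r$, which matches the source action with the target action. Since $f(0)=0$ and $f(\infty)=\infty$, and $a_W\colon S^0\to S^W$ is by construction the map picking out $\infty$ and $0$, one has $a_{\lambda(rk)}=f\circ a_{\lambda(k)}$ on the nose; smashing with $H\mZ$ then produces the commutative diagram (\ref{eq:HZ-equiv}) with the top arrow equal to $H\mZ\Smash f$.

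\emph{Reduction to the unit sphere.} Smashing the cofiber sequence $S(\lambda(k))_+\to S^0\overset{a_{\lambda(k)}}{\longrightarrow} S^{\lambda(k)}$ from (\ref{eq:a-cofiber}) with $H\mZ$ and comparing to its $rk$-analogue via $f$, the triangulated five-lemma reduces the claim to showing that the restriction $f|_S\colon S(\lambda(k))\to S(\lambda(rk))$ induces an $H\mZ$-module equivalence $H\mZ\Smash S(\lambda(k))_+\to H\mZ\Smash S(\lambda(rk))_+$ after inverting primes other than $p$.

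\emph{Verification via orbit spaces.} An $H\mZ$-module map is an equivalence iff it induces isomorphisms on $\pi^H_n$ for every subgroup $H\le G$ and every $n\in\Z$. For any $G$-CW complex $X$, the cellular Bredon chain complex with $\und{\Z}$ coefficients coincides with the ordinary singular cellular chain complex of the orbit space $X|_H/H$; this is the homology-side incarnation of the Quotient Lemma (Proposition~\ref{quotientlem}), obtained from the fact that the $G$-action on $AG(S^n)$ is trivial so that a $G/H$-cell of $X$ contributes exactly one ordinary cell to $X|_H/H$. Writing $k=p^am$ with $(m,p)=1$, for every $H=C_{p^b}\le G$ the orbit space $S(\lambda(k))|_H/H$ is homeomorphic to $S^1$: when $b\le a$ the subgroup $H$ acts trivially on $\lambda(k)$, while when $b>a$ the action factors through the free rotation action of $H/C_{p^a}$, whose quotient of $S^1$ is again $S^1$ via $z\mapsto z^{p^{b-a}}$. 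A direct coordinate check shows that in both regimes the map $z\mapsto z^r$ descends to the degree-$r$ self-map of $S^1$. After inverting primes other than $p$ the integer $r$ is a unit, so this degree-$r$ map is a homology equivalence; hence $H\mZ\Smash f|_S$, and by the previous paragraph $H\mZ\Smash f$, become equivalences as claimed.

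\emph{Expected obstacle.} The main bookkeeping is the uniform verification that $S(\lambda(k))|_H/H\cong S^1$ and that the descent of $z\mapsto z^r$ is the degree-$r$ self-map in both regimes $b\le a$ and $b>a$; once this is in hand, the use of $(r,p)=1$ to conclude that $r$ is invertible in the localized ground ring is automatic, and the cofiber-sequence/five-lemma step is formal.
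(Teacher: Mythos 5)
Your overall route is genuinely different from the paper's, and in outline it is viable. The paper never constructs a map of spaces: it writes down explicit cellular chain complexes of $\Z[G]$-modules for $S^{\lambda(k)}$ and $S^{\lambda(rk)}$ (two fixed $0$-cells plus a $1$-cell and a $2$-cell whose isotropy is the kernel $C_{p^a}$ of the action, $p^a$ being the largest power of $p$ dividing $k$), produces a direct chain map which is multiplication by $s\equiv r^{-1}$ on $H_2$, and concludes via the fixed-point functor $FP$ and the equivalence between $H\mZ$-modules and chain complexes of $\mZ$-modules. Your power map $z\mapsto z^r$ packages the same information geometrically, gives the commutativity of the triangle with the Euler classes for free (the paper leaves this implicit), and the cofiber-sequence/five-lemma reduction to the unit circles is sound.

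There is, however, a genuine error in the verification step. It is not true that the Bredon chain complex of $X$ with $\mZ$-coefficients, evaluated at $G/H$, is the cellular chain complex of the orbit space $X/H$. The Quotient Lemma~\ref{quotientlem} is a \emph{cohomology} statement (it works because the restriction maps of $\mZ$ are identities), and it does not dualize this way: on the homology side the boundary maps involve transfers, and the correct complex at level $G/H$ is the $H$-fixed points $C_*(X;\Z)^H$ of the nonequivariant cellular complex viewed as $\Z[G]$-modules, i.e.\ a $G/K$-cell contributes $FP(\Z[G/K])(G/H)=\Z[G/K]^H$ with the fixed-point (not coinvariant) boundary. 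The paper's own Proposition~\ref{pr:a-group} refutes your principle: $\tilde{H}_0(S^W;\mZ)\iso \Z/d(W)$, whereas $\tilde{H}^{sing}_0(S^W/G)=0$. In your specific situation the error happens to be harmless: every cell of $S(\lambda(k))$ (and of $S(\lambda(rk))$, since $(r,p)=1$) has the same isotropy group $C_{p^a}$, so for each $H$ the fixed-point complex and the orbit complex are both identified with $\Z[G/HC_{p^a}]\to\Z[G/HC_{p^a}]$ with the same boundary, and the induced map of $z\mapsto z^r$ is still multiplication by $r$ on $H_1$ and the identity on $H_0$. But as written the step rests on a false general claim; you should either carry out the computation on $H$-fixed points of the $\Z[G]$-module chain complexes, or prove the orbit-space identification in the restricted setting of complexes all of whose cells have the same isotropy.
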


\begin{proof}
We will see this by using the equivalence of homotopy theories between $H\mZ$-modules and chain complexes of $\mZ$-modules.
In particular, we will compute cellular chain complexes for $S^{\lambda(k)}$ and $S^{\lambda(rk)}$ and then show the complexes are quasi-isomorphic after localization at $p$. 

Let's first recall how to construct the cellular chain complex of $\mZ$-modules for a given $G$-CW complex. A $G$-CW complex is built by inductively attaching cells of the form $G/H\times D^n$ where $H$ is a subgroup of $G$ and $D^n$ denotes the $n$-disk with the trivial $G$-action. Note if we forget the $G$-action, then attaching a $G/H\times D^n$ equivariant cell is just attaching $\#(G/H)$ cells of dimension $n$. The non-equivariant cellular chain complex thus inherits an action of the group $G$ given by how $G$ permutes the cells. This action turns the non-equivariant cellular chain complex into a complex of $\Z[G]$-modules, where in each degree we have a direct sum of permutation modules $\Z[G/H]$, one for each equivariant cell of the form $G/H \times D^n$. 

We then use this chain complex of $\Z[G]$-modules to get the Mackey functor cellular chain complex by applying the ``fixed point functor'' \[FP\colon \Z[G]\text{-mod}\to \mZ\text{-mod}\] in each degree. For a $\Z[G]$-module $N$ \emph{the fixed point Mackey functor} is defined by $FP(N)\colon G/J\mapsto N^J$. Restriction and transfer maps are given by inclusion and norm (i.e. sum-over-orbits), respectively. One can verify the Mackey functor $FP(\Z[G/H])$ is isomorphic to the free $\mZ$-module generated at the spot $G/H$, so we get a chain complex of free $\mZ$-modules; this is the cellular chain complex of $\mZ$-modules.

Observe the functor $FP$ commutes with localization at $p$. Thus in order to show the cellular chain complex of $\mZ$-modules for $S^{\lambda(k)}$ and $S^{\lambda(rk)}$ are quasi-isomorphic after localization, it  will suffice to show the corresponding complexes of $\Z[G]$-modules are quasi-isomorphic after localization.

We now describe a cell structure for $S^{\lambda(k)}$. Fix a generator $t\in C_{p^e}$. Identify $C_{p^e}$ with $\Z/p^e$ by letting $t$ correspond to the coset of $1$. We will view $C_{p^e}$ as containing elements $0, 1, \dots, p^e-1$ when discussing subgroups of $C_{p^e}$ and view $C_{p^e}$ as containing $1, t, \dots t^{p^e-1}$ when considering elements acting on $\lambda(k)$. Let $\zeta=e^{\frac{2\pi i}{p^e}}$.
Consider the ray $\ray{01}$ in $\lambda(k)$; its orbit under the $C_{p^e}$-action consists of the rays through the points $\zeta^{kl}$ for $l\geq 0$, which may be identified with the subgroup of $C_{p^e}$ generated by $k$. Note the smallest nonzero element in this subgroup is $p^j$ where $p^j$ is the largest power of $p$ dividing  $k$.  So the ray from $0$ to $\zeta^{p^j}$ is the first ray in the orbit when one moves counterclockwise from $\ray{01}$, and also $(k)=(p^j)$. 

Write $p^j=mk$ in $C_{p^e}$ for $0\leq m < p^e$, so that $\zeta^{p^j}=t^m\cdot 1$ in the representation $\lambda(k)$.  Note that $(m,p)=1$. A cell decomposition for $S^{\lambda(k)}$ then consists of two fixed $0$-cells (corresponding to $0$ and $\infty$), a $1$-cell of type $C_{p^e}/(p^{e-j})$  (corresponding to our orbit of rays), and a $2$-cell of type $C_{p^e}/(p^{e-j})$ which is attached to the $1$-cell via $1-t^m$.  That is, $S^{\lambda(k)}$ has a cellular chain complex (at the level of $\Z[G]$-modules) of the form
\[ \xymatrix{
0\ar[r] & \Z[C_{p^e}/(p^{e-j})] \ar[r]^{1-t^m} & \Z[C_{p^e}/(p^{e-j})] \ar[r] & \Z \oplus \Z\ar[r] & 0.
}
\]
The condition $(m,p)=1$ readily implies that the homology in degree $2$ is $\Z$, generated by the sum $1+t+t^2+\cdots+t^{(p^{e-j}-1)}$.  Note that $t$ fixes this class.

Let's apply the same reasoning to $\lambda(rk)$ to get a similar complex. Since $(r,p)=1$ the largest power of $p$ dividing $rk$ will again be $p^j$. Thus we have the same $\Z[G]$-modules in the complex for $S^{\lambda(rk)}$. We just replace the map $1-t^m$ by $1-t^{ms}$, where $s$ is a positive integer such that $rs=1$ in $C_{p^e}$ (since then $p^j=mk=(ms)(rk)$).  Now we write down the map of complexes
\[ \xymatrix{
0\ar[r] & \Z[C_{p^e}/(p^{e-j})] \ar[r]^{1-t^{ms}}\ar[d]_{1+t^m+\cdots+t^{m(s-1)}} & \Z[C_{p^e}/(p^{e-j})] \ar[r]\ar[d]_\id & \Z \oplus \Z \ar[r]\ar[d]_\id & 0\\
0 \ar[r] & \Z[C_{p^e}/(p^{e-j})] \ar[r]^{1-t^{m}} & \Z[C_{p^e}/(p^{e-j})] \ar[r] & \Z\oplus\Z \ar[r] & 0 \period
}
\]
On homology the map in degree $2$ induces multiplication by $s$, because $t$ acts as the identity on $H_2$.   But $s$ is a unit in $\Z/p^e$ which implies $(s,p)=1$, so the induced map on homology is an isomorphism after localization at $p$.
\end{proof}

Because of the equivalence from (\ref{eq:HZ-equiv}) we can for most purposes ignore $\lambda(rk)$ in the $RO(G)$-grading for $H\mZ$, at least after inverting primes away from $\# G$.  For example, if $n=9$ then we can get away with only considering the portion of $RO(G)$ spanned by $1$, $\lambda(1)$, and $\lambda(3)$, with the understanding that a group in the $RO(G)$-grading indexed by a representation $V$ having other summands is isomorphic (after localization) to the corresponding group indexed by $V'$ where the unwelcome summands are replaced with either $\lambda(1)$ or $\lambda(3)$.  The vanishing requirement deals with the homotopy fiber of an Euler class $a_V$, and using the triangles (\ref{eq:HZ-equiv}) one sees that the requirement for $V$ is equivalent to the requirement for $V'$.  Note that we are using Remark~\ref{re:loc} to know that it is enough to consider the vanishing requirements after inverting the primes away from $\#G$.  

\subsection{The case \mdfn{$G=C_p$, $p>2$}}
By the above considerations, we can restrict our attention to degrees $a+b\lambda(1)$ for $a,b\in \Z$.  Let us shorten $\lambda(1)$ to just $\lambda$.  Recall that the rank of $a+b\lambda$ is $a+2b$.  

The structure of $H\mZ^\star(\pt)$ can be deduced from the computations and techniques in \cite{L}, \cite{FL}, and \cite{C}, but it is difficult to find it stated precisely in those sources.  Another reference is \cite[Proposition 6.3]{Z}.
The positive part (when $b>0$) is the polynomial ring $\Z[u_{\lambda},a_\lambda]/(pa_\lambda)$.  
The negative part (when $b<0$) has the following elements:
\begin{itemize}
\item Elements $\frac{p}{u_\lambda^k}$ for $k\geq 1$, which generate $\Z$ summands; \bigskip
\item Elements $\Sigma^1( \frac{1}{u_\lambda^k a_\lambda^l} )$ for $k,l\geq 1$, which generate $\Z/p$ summands.  The degree of this class is $1-k(\lambda-2)-l\lambda$ (note in particular the ``$1+$'' part---the $\Sigma^1$ appears in the notation to remind us that the degree of this element is one more than the expected degree of $\frac{1}{u_\lambda^k a_\lambda^l} $).
\end{itemize}
Multiplication by $u_{\lambda}$ and $a_\lambda$ respects the fraction notation, in the sense that
\begin{align*}
& u_\lambda \cdot \tfrac{p}{u_\lambda^k}=\tfrac{p}{u_\lambda^{k-1}}, \quad
a_\lambda \cdot \tfrac{p}{u_\lambda^k}=0,\\
&u_\lambda\cdot \Sigma^1\bigl ( \tfrac{1}{u_\lambda^ka_\lambda^l} \bigr )=\begin{cases}
\Sigma^1( \tfrac{1}{u_\lambda^{k-1}a_\lambda^l}) & \text{if $k\geq 2$,}\\
0 & \text{if $k=1$},
\end{cases}\\
&a_\lambda\cdot \Sigma^1\bigl ( \tfrac{1}{u_\lambda^ka_\lambda^l} \bigr )=\begin{cases}
\Sigma^1( \tfrac{1}{u_\lambda^{k}a_\lambda^{l-1}}) & \text{if $l\geq 2$,}\\
0 & \text{if $l=1$}.
\end{cases}
\end{align*}
Note the slogan ``if the resulting fraction is not one of the allowed ones, then the product is zero''.  This is all we will need of the ring structure, but to complete the picture let us just state that the product of two $\Sigma^1$-classes is always zero.

A key thing to focus on here is the non-existent classes $\frac{1}{u_\lambda^k}$ and $\Sigma^1(\frac{1}{u_\lambda})$.  We have $p$ times the first classes, and we have $a_\lambda$- and $u_\lambda$-divisions (infinitely many times) of the second set of classes.   Note the fact that this second type of classes is $p$-torsion follows from the fact that $p a_\lambda=0$, using the $a_\lambda$-divisibility.   

\begin{remark}
Observe that the positive cone of $\M$ has nonzero groups only in degrees having even rank.  For the negative cone of $\M$, the elements $\frac{p}{u_\lambda^k}$ are in degrees with even rank but all of the other nonzero elements are in degrees with odd rank.  
\end{remark}

If $V$ is a nonzero representation such that $V^G= 0$ then $V=k\lambda$ for some $k\geq 1$.  We want to examine the sequence
\[ H^{\ell k\lambda-\ell}(\pt)\llra{a_\lambda^k} H^{(\ell+1)k\lambda-\ell}(\pt)\llra{a_\lambda^k}
H^{(\ell+2)k\lambda-\ell}(\pt)
\]
for $\ell\in \Z$.  We will additionally assume $k\neq 1$ to avoid the $\dim V=2$ anomaly.  

When $\ell\geq 0$ all of the groups are in the positive cone of $\M$.
Because the positive cone is polynomial it is immediate that the first map will be surjective and the second injective as long as the rank of the middle index is greater than $0$. This is the condition that $2k>\frac{\ell}{\ell+1}$ and always holds for $\ell\geq 0$.

We do not need to consider $\ell=-1$ since that case is always satisfied.  For $\ell=-2$ the condition is satisfied if and only if the middle group in the sequence is zero, i.e. $H^{2-k\lambda}(\pt)=0$.  Since the degree $2-k\lambda$ lies in the negative cone and has even rank, the cohomology group will be nonzero only if $2-k\lambda$ is a negative multiple of $2-\lambda$.  Since $k\neq 1$, the group is therefore zero.  

So we are down to the $\ell\leq -3$ case, which puts all three of the groups in the negative cone of $\M$.  
In the negative cone, multiplication by $a_\lambda^k$ is an isomorphism as long as the index of the target has non-positive rank {\it and\/} is not a negative multiple of $2-\lambda$ (these are the degrees containing the $\frac{p}{u_\lambda^r}$ classes).  
So the first map in our sequence will be surjective when $(\ell+1)2k-\ell\leq 0$ and $2(\ell+1)k\neq \ell$, or equivalently $(\ell+1)2k<\ell$.
Since $\ell\leq -3$ this is equivalent to 
$2k> \frac{\ell}{\ell+1}$, which is always true.
Likewise, the second map in our sequence will be injective provided that
$(\ell+2)2k-\ell\leq 0$, or $2k\geq \frac{\ell}{\ell+2}$.  This is guaranteed since $k\geq 2$ and $\ell\leq -3$.

The upshot of this analysis is that the vanishing requirement is satisfied as long as $k\neq 1$, i.e. $\dim V\geq 3$.

\subsection{The case \mdfn{$C_2$}}
Here $H^\star(\pt;\mZ)$ was probably first computed in an  unpublished paper of Stong.  The additive structure can be deduced from the results presented in \cite[Theorem 2.1]{L}.  Published descriptions of the ring are \cite[Theorem 2.8]{D1} and also \cite[Theorem 9.9.19]{HHR4}. 
The sign representation $\sigma$ is the only nontrivial irreducible in this case.  Its double $2\sigma$ is orientable and so gives rise to a class $u_{2\sigma}$ and a mirror class $\frac{2}{u_{2\sigma}}$.  
One can think of $2\sigma$ as playing the role of $\lambda$ from the $C_p$ case.  

The positive cone of $\M$ is the polynomial ring $\Z[u_{2\sigma},a_\sigma]/(2a_\sigma)$.  
The negative cone has the following elements:
\begin{itemize}
\item Elements $\frac{2}{u_{2\sigma}^k}$ for $k\geq 1$, which generate $\Z$ summands;\bigskip
\item Elements $\Sigma^1( \frac{1}{u_{2\sigma}^k a_\sigma^l} )$ for $k,l\geq 1$, which generate $\Z/2$ summands. (These elements follow the same degree conventions as in the $C_p$ case above.) 
\end{itemize}
The rest of the analysis is identical to the $C_p$ case, with the conclusion that $V=k\sigma$ satisfies the vanishing hypotheses as long as $k\neq 2$ (and in particular, when $\dim V\geq 3$).  

\subsection{The case \mdfn{$C_{p^2}$, $p>2$}}

Here we take our ordered basis to be $1$, $\lambda(1)$, and $\lambda(p)$.  For convenience set $\lambda_i=\lambda(p^i)$, for $i=0,1$, and write $a_i=a_{\lambda_i}$, $u_i=u_{\lambda_i}$.    Note that $d(\lambda_i)=p^{2-i}=e(\lambda_i)$, and that for $r,s>0$ we have $d(r\lambda_0+s\lambda_1)=p$ and $e(r\lambda_0+s\lambda_1)=p^2$.

The computations of $\M$ are taken from \cite[Theorem 6.10]{Z} and summarized below, with the one exception that \cite{Z} failed to mention the $\frac{p}{u_1^s}$ classes.  Note that the following does not give all of $\M$, but only the {\it regular\/} portion.

The positive cone of $\M$  is the quotient $\Z[u_0,u_1,a_0,a_1]/(p^2a_0,pa_1,a_1u_0=pa_0u_1)$.  The last relation is the so-called $au$-relation (or ``gold relation'').  Note that a consequence of this relation is $a_1^2u_0=a_1(pa_0u_1)=0$.  

The negative cone is spanned by the following elements, where again we use the $\Sigma^{1}$ notation to indicate the elements are shifted $+1$ in degree:
\begin{itemize}
\item Elements $\frac{p^2}{u_0^ru_1^s}$, $r>0$, which generate $\Z$-summands;\bigskip
\item Elements $\frac{p}{u_1^s}$, $s>0$, which also generate $\Z$-summands;\bigskip
\item Elements $\frac{1}{a_0^t}\Sigma^1(\frac{1}{u_0^r u_1^s})$, $t>0$, $r>0$, and $s\geq 0$, which generate $\Z/p^2$-summands   (note that the $\frac{1}{a_0^t}$ can be moved inside the $\Sigma^1$ if desired);\bigskip
\item Elements $\frac{1}{a_0^ra_1^s}\Sigma^1 (\frac{1}{u_1^l})$ with $l>0$, $r\geq 0$, $s\geq 0$, $r+s>0$, which being multiples of $a_1$ are $p$-torsion (same note as above).
\end{itemize}
As in other cases, multiplications by $u_i$ and $a_i$ follow the fraction notation and are zero when the fraction is not defined.  

The key thing to notice here is that we have the usual classes $\frac{p^2}{u_0^ku_1^l}$ and $\frac{p}{u_1^l}$ together with infinite $a_0$- and $a_1$-divisors of the $\Sigma^1$ associates of these classes.  
One might have expected elements $\frac{1}{a_0^ra_1^s}\Sigma^1(\frac{1}{u_0^ku_1^\ell})$, but these vanish by the $au$-relation:
\[ 
\tfrac{1}{a_0^ra_1^s}\Sigma^1(\tfrac{1}{u_0^ku_1^\ell})=a_1^2u_0
\tfrac{1}{a_0^ra_1^{s+2}}\Sigma^1(\tfrac{1}{u_0^{k+1}u_1^\ell})=0.
\]

We now explore the vanishing hypotheses for a representation $V=j\lambda_0+k\lambda_1$, where $j,k\geq 0$ and $j+k\geq 2$ to avoid the $\dim V=2$ anomaly.  We need to examine the sequence
\[ H^{\ell (j\lambda_0+k\lambda_1)-\ell}(\pt)\llra{a_0^ja_1^k} H^{(\ell+1)(j\lambda_0+k\lambda_1)-\ell}(\pt)\llra{a_0^ja_1^k}
H^{(\ell+2)(j\lambda_0+k\lambda_1)-\ell}(\pt).
\]

Start with $\ell\geq 0$, so that all three groups are in the positive cone.
The positive cone is spanned by elements $a_0^Pa_1^Qu_0^Ru_1^S$ with $P,Q,R,S\geq 0$, where if $R>0$ then $Q=0$.  Such an element lies in our middle group if
\[ (\ell+1)(j\lambda_0+k\lambda_1)-\ell=P\lambda_0+Q\lambda_1+R(\lambda_0-2)+S(\lambda_1-2),
\]
or equivalently
\begin{align*}
&\ell=2(R+S),\\
&(\ell+1)j=P+R,\\
&(\ell+1)k=Q+S.
\end{align*}
Note that we have only solutions when $\ell$ is even, and so we restrict to that case.

If $R>0$ then $Q=0$ and our solution is $S=(\ell+1)k$, $R=\frac{\ell}{2}-(\ell+1)k$, $P=(\ell+1)(j+k)-\frac{\ell}{2}$.  But since $R\geq 0$ this is only valid if $k=0$, in which case we are looking at the element $a_0^{(\ell+1)j-\frac{\ell}{2}}u_0^{\frac{\ell}{2}}$.

If $R=0$ then we have $P=(\ell+1)j$, $S=\frac{\ell}{2}$, and $Q=(\ell+1)k-\frac{\ell}{2}$.  But as $Q\geq 0$ we must have $k\geq 1$ here, and our single solution is $a_0^{(\ell+1)j}a_1^{(\ell+1)k-\frac{\ell}{2}}u_1^{\frac{\ell}{2}}$.    

So we have two non-overlapping cases ($k=0$ and $k\geq 1$), and in each case the middle group is spanned by a single element.  

In the $k=0$ case we are looking at multiplication by $a_0^j$ and the single generator $a_0^{(\ell+1)j-\frac{\ell}{2}}u_0^{\frac{\ell}{2}}$.  The second map in our sequence is clearly injective, and the first is surjective because 
$(\ell+1)j-\frac{\ell}{2}\geq j$.  

In the $k\geq 1$ case our middle group is spanned by 
$a_0^{(\ell+1)j}a_1^{(\ell+1)k-\frac{\ell}{2}}u_1^{\frac{\ell}{2}}$.  Multiplication by $a_0^ja_1^k$ is injective because there are no $u_0$'s in the monomial, and is surjective because $(\ell+1)j\geq j$ and $(\ell+1)k-\frac{\ell}{2}\geq k$.  This completes the verification when $\ell\geq 0$.

The case $\ell=-1$  is always satisfied, and $\ell=-2$ works precisely when the middle group
$H^{2-(j\lambda_0+k\lambda_1)}(\pt)$ vanishes.  Note that
$2-(j\lambda_0+k\lambda_1)$ has even rank, and the only nonzero elements of the negative cone in even ranks are ones of the form $\frac{??}{u_0^ru_1^s}$ where $??$ is either $p$ or $p^2$.  This element lies in degree $r(2-\lambda_0)+s(2-\lambda_1)$, and the only way this coincides with the degree of our middle group is if $j+k=1$, which is contrary to our  hypothesis.
So $\ell=-2$ is now verified.

Finally we examine $\ell\leq -3$, where all groups in the sequence are in the negative cone. 
In the negative cone we have the elements $\frac{??}{u_0^Ru_1^S}$ in degree $R(2-\lambda_0)+S(2-\lambda_1)$ and the elements $\frac{1}{a_0^Pa_1^Q} \Sigma^1(\frac{1}{u_0^Ru_1^S})$ where $Q=0$ if $R>0$, and here the degree is $1-P\lambda_0-Q\lambda_1+R(2-\lambda_0)+S(2-\lambda_1)$.  Observe that elements of the first type all have rank zero, and so could be an element of our middle group only if
\[ 0=(\ell+1)2(j+k)-\ell;
\]
but this is impossible since we have $\ell\leq -3$ and $j+k\geq 2$.  
To determine which elements of the second type are in our middle group we need to solve
\[ (\ell+1)(j\lambda_0+k\lambda_1)-\ell=1-P\lambda_0-Q\lambda_1+R(2-\lambda_0)+S(2-\lambda_1).
\]
This becomes
\[ -(\ell+1)=2(R+S), \quad -(\ell+1)j=P+R, \quad -(\ell+1)k=Q+S.
\]
There are solutions only when $\ell$ is odd.
If $R>0$ we are forced to have $Q=0$ and therefore $S=-(\ell+1)k$, $R=-\frac{\ell+1}{2}+(\ell+1)k$, $P=-(\ell+1)(j+k)+\frac{\ell+1}{2}$.  But since $R>0$ we must have $k=0$, and so our unique solution is the element 
$\frac{1}{a_0^{-(\ell+1)j+\frac{\ell+1}{2}}} \Sigma^1\Bigl (\frac{1}{u_0^{-\frac{\ell+1}{2}}} \Bigr)$.  

If $R=0$ then we get $S=-\frac{\ell+1}{2}$, $Q=-(\ell+1)k+\frac{\ell+1}{2}$, $P=-(\ell+1)j$.  Since $Q\geq 0$ we must have $k\geq 1$, so this case does not overlap the previous one.  Here our element is
\[ \frac{1}{a_0^{-(\ell+1)j}a_1^{-(\ell+1)k+\frac{\ell+1}{2}}} \Sigma^1 \Biggl ( \frac{1}{u_1^{-\frac{\ell+1}{2}}} \Biggr ).
\]

Now we analyze the two cases separately.  If $k=0$ we are looking at multiplication by $a_0^j$, but our element is infinitely $a_0$-divisible: so the first map in our sequence is surjective.  Likewise, the second map is injective because the power of $a_0$ in the denominator is larger than $j$ (because $-(\ell+1)j+\frac{\ell+1}{2} > j$, since $j\geq 2$ in this case).

When $k\geq 1$ we are looking at an element that is infinitely divisible by both $a_0$ and $a_1$, so the first map is surjective.  And likewise, the second map is injective because the power on $a_0$ in the denominator is strictly larger than $j$ and the power of $a_1$ is at least $k$.

Once again, the final conclusion is that the vanishing hypotheses are satisfied as long as $\dim V\geq 3$.

\subsection{The \mdfn{$C_4$} case}
Here the computations were done in \cite{G}, and then re-done via other methods (with a few corrections) in \cite{Y2}.  Some partial results were given earlier in \cite[Lemma 6.11 and Corollary 6.13]{HHR2}.  See 
\cite[Theorem 13]{Y2} for the computation of $\M$ we describe here.

Let $\sigma$ denote the sign representation and $\lambda$ denote $\R^2$ under counterclockwise rotation by $\frac{\pi}{2}$.  Take $1$, $\lambda$, $\sigma$ for our ordered basis of $RO(C_4)$.  The basic elements making up $\M$ are $u_\lambda$, $u_{2\sigma}$ (both torsion-free), $a_\lambda$ (order 4), and $a_\sigma$ (order 2).  The positive cone is 
\[ \Z[u_\lambda,u_{2\sigma},a_\lambda,a_\sigma]/(4a_\lambda, 2a_\sigma, a_\sigma^2u_\lambda-2u_{2\sigma}a_\lambda)
\]
with the final relation being the so-called ``$au$-relation''.  

For an element $r(1)+s(\lambda)+t(\sigma)\in RO(C_4)$ we call $r$ the \dfn{fixed-point index}, $s$ the \mdfn{$\lambda$-index}, and $t$ the \mdfn{$\sigma$-index}.  Note that the generators $u_\lambda$, $u_{2\sigma}$, $a_\lambda$, and $a_\sigma$ all have even fixed-point index.  If we divide $\M$ up into two pieces according to the parity of the fixed-point index---call these pieces $\M_{ev}$ and $\M_{odd}$---then the action of the four generators will preserve these pieces.  

We describe $\M_{ev}$ and $\M_{odd}$ via two charts.  Figure~\ref{fig:C4-one} shows $\M_{ev}$; it turns out that all of the elements here can be described as rational expressions in the four basic generators.  
Here are instructions on how to read this chart:
\begin{itemize}
\item Each labelled element (in red) generates a copy of $\Z$.  Single black dots represent a $\Z/2$, and a black dot with an open circle around it represents $\Z/4$.  
\item This is a projection of a three-dimensional chart onto a plane, so rays that look like they might intersect line segments possibly don't. 
The diagonal lines and rays are moving outside the plane containing the red classes.  

\item Multiplication by $u_\lambda$ moves up one unit, and multiplication by $u_{2\sigma}$ moves right one unit.  (Note that these operations do not always take labelled generators to other labelled generators---sometimes an extra integer multiple appears).  
\item Multiplication by $a_\lambda$ moves up and to the right, whereas multiplication by $a_\sigma$ moves down and to the right.  Both of these operations are labelled with diagonal lines in the picture.  For example, observe the relation $a_\sigma^2=a_\lambda\cdot \frac{2u_{2\sigma}}{u_\lambda}$, which is a form of the $au$-relation.  Also observe that $a_\lambda$ and $a_\sigma$ both annihilate $\frac{4}{u_\lambda}$, as one can observe from the absence of lines there.  
\item Observe the $au$-relation $a_\sigma^2 u_\lambda=2a_\lambda u_{2\sigma}$.  In particular, note that multiplication by $a_\sigma$ hits {\it twice\/} the generator of the $\Z/4$, as it must since the source is $2$-torsion.  
\item The powers $a_\sigma^k$ for $k\geq 3$ are all infinitely-divisible by $a_\lambda$, as indicated by the downward-left-pointing arrows.  
\item There is one place in the drawn portion of the diagram where
two different phenomena run into each other in the same degree: this is the $\frac{4u_{2\sigma}^2}{u_\lambda^2}$ class and the $\frac{a_\sigma^4}{a_\lambda^2}$ class.  The first class generates a $\Z$ and the second a $\Z/2$, and the group in this degree is $\Z\oplus \Z/2$.  This always happens in the degrees containing the classes $4(\frac{u_{2\sigma}}{u_\lambda})^k\cdot u_{2\sigma}^i$, and in every other degree the group is one of $0$, $\Z/2$, $\Z/4$, and $\Z$.  [Because the degrees of the $\Z\oplus \Z/2$ groups are not in the regular portion, they don't come up in our analysis of the vanishing requirement.]
\item Trace out the connected component of $1$ in the graph, and make sure you understand it (it is a piece of a grid on a plane).  Multiplication by $u_{2\sigma}$ copies this component over and over, moving it to the right in the chart.  ``Division'' by $u_{2\sigma}$ produces a truncated version of the component, which is then copied over and over as one moves to the {\it left\/} in the chart.
\item The fixed point index is preserved by $a_\lambda$ and $a_\sigma$, so it is constant on the connected components of the chart.  Multiplication by $u_{2\sigma}$ and multiplication by $u_\lambda$ each subtract two from the fixed-point index: so the fixed point index drops by two when jumping from a component to the next one to the right (or up).  
\end{itemize}

\begin{figure}[ht]
\begin{tikzpicture}[scale=2,yscale=1.4]
\draw[] (-0.05,0) node[left,red] {\small $1$};
\draw[] (1.05,0.05) node[left,red] {\small $u_{2\sigma}$};
\draw[] (2.05,0.05) node[left,red] {\small $u_{2\sigma}^2$};
\draw[] (-1,0) node[left,red] {\small $\frac{2}{u_{2\sigma}}$};
\draw[] (-2,0) node[left,red] {\small $\frac{2}{u_{2\sigma}^2}$};

\draw[] (0,1.05) node[left,red] {\small $u_\lambda$};
\draw[] (1,1.05) node[left,red] {\small $u_{2\sigma}u_\lambda$};
\draw[] (2.2,1.08) node[left,red] {\small $u_{2\sigma}^2u_\lambda$};
\draw[] (-1,1.05) node[left,red] {\small $\frac{u_\lambda}{u_{2\sigma}}$};
\draw[] (-2,1.05) node[left,red] {\small $\frac{u_\lambda}{u_{2\sigma}^2}$};

\draw[] (0,-1) node[left,red] {\small $\frac{4}{u_\lambda}$};
\draw[] (1,-1) node[left,red] {\small $\frac{2u_{2\sigma}}{u_\lambda}$};
\draw[] (2,-1) node[left,red] {\small $\frac{2u_{2\sigma}^2}{u_\lambda}$};
\draw[] (-1,-1) node[left,red] {\small $\frac{4}{u_{2\sigma}u_\lambda}$};
\draw[] (-2,-1) node[left,red] {\small $\frac{4}{u_{2\sigma}^2u_\lambda}$};

\draw[] (0,-2) node[left,red] {\small $\frac{4}{u_\lambda^2}$};
\draw[] (1,-2) node[left,red] {\small $\frac{4u_{2\sigma}}{u_\lambda^2}$};
\draw[] (2,-2) node[left,red] {\small $\frac{4u_{2\sigma}^2}{u_\lambda^2}$};
\draw[] (-1,-2) node[left,red] {\small $\frac{4}{u_{2\sigma}u_\lambda^2}$};
\draw[] (-2,-2) node[left,red] {\small $\frac{4}{u_{2\sigma}^2u_\lambda^2}$};

\fill (1.05,-0.75) circle (1pt);
\fill (2.05,-0.75) circle (1pt);

\draw[->] (-2.1,0.1) -- (-1.65,0.6);

\foreach \y in {0,...,4}
    \fill (-1.75+\y,0.48) circle (1pt);
\foreach \y in {2,...,4}
    \draw (-1.75+\y,0.48) circle (2pt);

\foreach \y in {0,...,4}
    \fill (-1.95+\y,0.25) circle (1pt);

\foreach \y in {0,...,4}
    \fill (-1.95+\y,1.25) circle (1pt);

\foreach \y in {2,...,4}
     \draw[] (-1.95+\y,0.25) circle (2pt);

\foreach \y in {0,...,4}
     \draw[] (-1.95+\y,1.25) circle (2pt);


\fill (0.4,-0.32) circle (1pt);

\fill (0.6,-0.1) circle (1pt);

\fill (1.4,-0.32) circle (1pt);

\fill (1.6,-0.1) circle (1pt);

\foreach \y in {0,...,4}
    \fill (-1.6+\y,0.68) circle (1pt);
\foreach \y in {0,...,4}
    \fill (-1.41+\y,0.88) circle (1pt);


\draw (-2.6,0.68) -- (-2.01,0.3);
\draw (-2.4,0.88) -- (-1.81,0.5);

\draw[] (-2.22,1.4)--(-0.74,0.46);
\draw[] (-2.1,1.0)--(-0.95,0.26);

\draw[->] (-2.1,1.1) --(-1.8,1.4);
\draw[->] (-1.59,0.69) --(-1.31,1.0);
\draw[->] (-1.1,0.1) --(-0.65,0.6);

\draw[->,blue,thick] (-1.2,1.4)--(0.58,0.24);
\draw[->,blue,thick] (-1.1,1.0)--(0.78,-0.22);
\draw[->,blue,thick] (-0.09,0)--(2.47,-1.66);

\draw[->,blue,thick] (-1.1,1.1) --(-0.8,1.4);
\draw[->,blue,thick] (-0.59,0.69) --(-0.31,1.0);
\draw[->,blue,thick] (-0.1,0.1) --(0.35,0.6);
\draw[->,blue,thick] (0.41,-0.31) --(0.7,0.02);
\draw[->,blue,thick] (0.94,-0.9) --(1.18,-0.58);

\draw[->] (-0.2,1.4)--(1.58,0.24);
\draw[->] (-0.1,1.0)--(1.78,-0.22);
\draw[->] (0.91,0)--(2.37,-0.95);

\draw[->] (-0.1,1.1) --(0.2,1.4);
\draw[->] (0.41,0.69) --(0.69,1.0);
\draw[->] (0.9,0.1) --(1.35,0.6);
\draw[->] (1.41,-0.31) --(1.7,0.02);
\draw[->] (1.94,-0.9) --(2.18,-0.58);

\draw[->] (0.8,1.4)--(2.58,0.24);
\draw[->] (0.9,1.0)--(2.78,-0.22);
\draw[] (1.91,0)--(2.6,-0.45);

\draw[->] (0.9,1.1) --(1.2,1.4);
\draw[->] (1.41,0.69) --(1.69,1.0);
\draw[->] (1.9,0.1) --(2.35,0.6);
\draw[->] (2.41,-0.31) --(2.7,0.02);

\draw[->] (1.8,1.4)--(2.78,0.76);
\draw[->] (1.9,1.0)--(2.78,0.44);

\draw[->] (1.9,1.1) --(2.2,1.4);
\draw[->] (2.41,0.69) --(2.69,1.0);

%
\fill (1.73,-1.18) circle(1pt);
\fill (2.29,-1.55) circle (1pt);
\fill (1.5,-1.45) circle (1pt);
\fill (2.06,-1.82) circle (1pt);

\draw[blue,->,thick] (1.5,-1.45) -- (1.8,-1.1);
\draw[blue,->,thick] (1.5,-1.45)--(1.23,-1.75);

\draw[blue,->,thick] (2.06,-1.82) -- (2.36,-1.47);
\draw[blue,->,thick] (2.06,-1.82)--(1.81,-2.12);
\draw[blue,->,thick] (0.91,-1.08) -- (2.24,-1.92);

\end{tikzpicture}
\caption{The module $\M_{ev}$}
\label{fig:C4-one}
\end{figure}
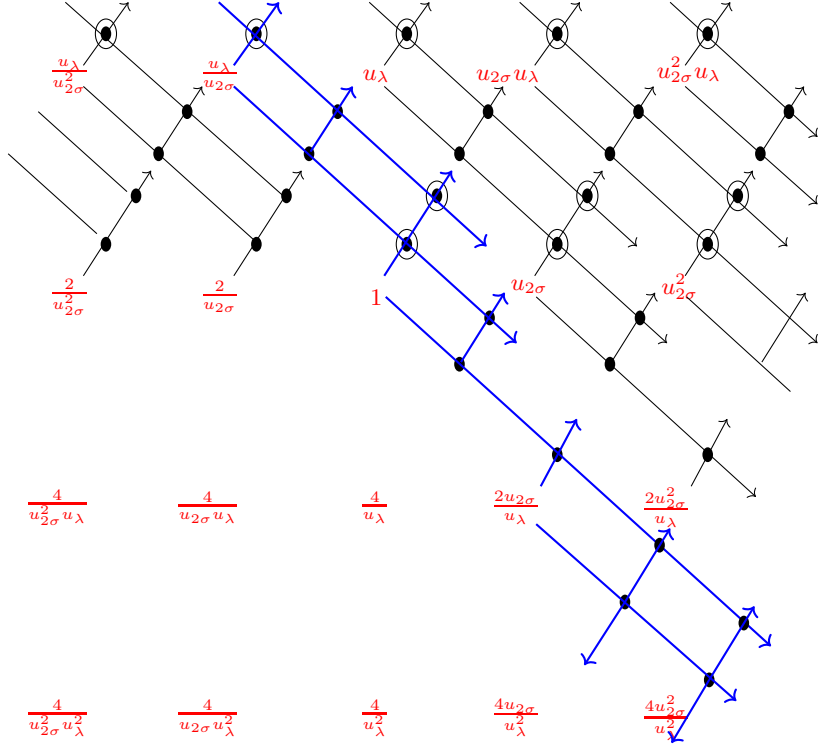

The second chart is Figure~\ref{fig:C4-two}, showing $\M_{odd}$.  These are the `exotic' classes in $\M$.  All of these are infinitely-divisible by $a_\lambda$.  Here are the rules for reading this chart:
\begin{itemize}
\item The basic organization of the chart follows the previous one: multiplication by $u_{2\sigma}$ moves one spot to the right, and multiplication by $u_\lambda$ moves one spot up.  Multiplication by $a_\sigma$ is ``down and to the right'', and multiplication by $a_\lambda$ is ``up and to the right'', though in both cases these ``diagonal'' actions should be regarded as happening in different dimensions than the first two.

\item None of the classes labelled $\Sigma^1(\blank)$ actually exist; rather, these are convenient placeholders to help name other classes.  For example, there are classes $\Sigma^1(\frac{1}{a_\lambda^i u_\lambda})$ for all $i\geq 1$, denoted in the chart by the arrow descending down and to the left from $\Sigma^1(\frac{1}{u_\lambda})$.    
Each of these classes generates a $\Z/4$.  Note that
\[ a_\lambda\cdot \Sigma^1(\tfrac{1}{a_\lambda^2 u_\lambda})=\Sigma^1( \tfrac{1}{a_\lambda u_\lambda})
\]
and similarly for any product where all of the named classes actually exist.  
\item The ``non-existent'' $\Sigma^1$-classes are denoted by $\squar$ in 
 the chart. 
 Some of these are not explicitly named due to space constraints, but are related to the ones shown via division by either $u_\lambda$ or $u_{2\sigma}$.  
\item In terms of degrees, the $\Sigma^1$ adds $1$ to the degree of the class inside the parentheses.  So $\Sigma^1(\frac{1}{u_\lambda})$ (if it existed) would have degree $1+(2-\lambda)=3-\lambda$.
\item The brown numbers written next to certain classes give the fixed point index.  Recall that this index is constant on connected components, and that moving one ``unit'' either down or to the left increases this index by $2$.    
\item The blue classes yield $\Z/4$'s upon division by $a_\lambda$, whereas the pink classes yield $\Z/2$'s.  
\item The class labelled $Z$ is infinitely-divisible by $a_\lambda$, $a_\sigma$, and $u_{2\sigma}$; it is also non-torsion for $a_\lambda$.    All of these classes are drawn slightly out of place in the diagram, as otherwise there are too many classes piling on top of each other.
\item The chart extends infinitely to the left, right, and downwards; but in contrast, multiplication by $u_\lambda$ annihilates everything in the top ``row'' of the diagram.  This includes the classes $\Sigma^1(\frac{1}{u_{2\sigma}^i a_\lambda^j} )$ and all of the classes $\frac{Z}{a_\lambda^i a_\sigma^j u_{2\sigma}^k}$.  
\end{itemize}

\begin{figure}[ht]
\begin{tikzpicture}[scale=2,yscale=1.4]
\draw[] (-0.25,-0.12) node[left,blue] {\tiny $\Sigma^1(1)$};
\draw[] (-1.35,-0.17) node[left,magenta] {$\squar$};
\draw[] (-1,-0.0) node[left,magenta] {\tiny $\Sigma^1 \Bigl ( \frac{1}{u_{2\sigma}}\Bigr )$};
\draw[] (-2.35,-0.17) node[left,magenta] {$\squar$};
\draw[] (-2,-0) node[left,magenta] {\tiny $\Sigma^1 \Bigl ( \frac{1}{u_{2\sigma}^2} \Bigr)$} ;

\draw[] (-0.0,-1.0) node[left,blue] {\tiny $\Sigma^1\Bigl ( \frac{1}{u_\lambda} \Bigr )$};

\draw[] (-0.35,-1.17) node[left,blue] {$\squar$};

\draw[] (-1.35,-1.17) node[left,blue] {$\squar$};
\draw[] (-2.35,-1.17) node[left,blue] {$\squar$};

\draw[] (0.65,-1.17) node[left,magenta] {$\squar$};
\draw[] (1,-1.0) node[left,magenta] {\tiny $\Sigma^1\Bigl ( \frac{u_{2\sigma}}{u_\lambda} \Bigr )$};
\draw[] (1.65,-1.17) node[left,magenta] {$\squar$};
\draw[] (2,-1) node[left,magenta] {\tiny $\Sigma^1\Bigl ( \frac{u_{2\sigma}^2}{u_\lambda} \Bigr )$};

\draw[] (-0.35,-2.17) node[left,blue] {$\squar$};
\draw[] (-1.35,-2.17) node[left,blue] {$\squar$};
\draw[] (-2.35,-2.17) node[left,blue] {$\squar$};
\draw[] (0.65,-2.17) node[left,blue] {$\squar$};
\draw[] (-3.35,-2.17) node[left,blue] {$\squar$};

\draw[] (-3.35,-1.17) node[left,blue] {$\squar$};

\draw[] (1.65,-2.17) node[left,blue] {$\squar$};
\draw[] (2,-2.0) node[left,blue] {\tiny $\Sigma^1 \Bigl ( \frac{u_{2\sigma}^2}{ u_\lambda^2}\Bigr )$};


\draw[->] (-1.5,-0.2) -- (-2,-0.7);
\draw[->] (-2.5,-0.2) -- (-3,-0.7);

\fill (-2.6,-0.3) circle (1pt);
\fill (-2.75,-0.45) circle (1pt);
\fill (-2.9,-0.6) circle (1pt);

\fill (-1.6,-0.3) circle (1pt);
\fill (-1.75,-0.45) circle (1pt);
\fill (-1.9,-0.6) circle (1pt);

\draw[->] (1.5,-1.2) -- (1,-1.7);
\draw[->] (0.5,-1.2) -- (0,-1.7);
\draw[->] (-0.5,-1.2) -- (-1,-1.7);
\draw[->] (-1.5,-1.2) -- (-2,-1.7);
\draw[->] (-2.5,-1.2) -- (-3,-1.7);

\fill (-2.6,-1.3) circle (1pt);
\draw[] (-2.6,-1.3) circle (2pt);
\fill (-2.75,-1.45) circle (1pt);
\draw[] (-2.75,-1.45) circle (2pt);
\fill (-2.9,-1.6) circle (1pt);
\draw[] (-2.9,-1.6) circle (2pt);

\fill (-1.6,-1.3) circle (1pt);
\draw[] (-1.6,-1.3) circle (2pt);
\fill (-1.75,-1.45) circle (1pt);
\draw[] (-1.75,-1.45) circle (2pt);
\fill (-1.9,-1.6) circle (1pt);
\draw[] (-1.9,-1.6) circle (2pt);

\fill (-0.6,-1.3) circle (1pt);
\draw[] (-0.6,-1.3) circle (2pt);
\fill (-0.75,-1.45) circle (1pt);
\draw[] (-0.75,-1.45) circle (2pt);
\fill (-0.9,-1.6) circle (1pt);
\draw[] (-0.9,-1.6) circle (2pt);

\fill (0.4,-1.3) circle (1pt);

\fill (0.25,-1.45) circle (1pt);

\fill (0.1,-1.6) circle (1pt);

\fill (1.4,-1.3) circle (1pt);
\fill (1.25,-1.45) circle (1pt);
\fill (1.1,-1.6) circle (1pt);

\draw[->] (1.5,-2.2) -- (1,-2.7);
\draw[->] (0.5,-2.2) -- (0,-2.7);
\draw[->] (-0.5,-2.2) -- (-1,-2.7);
\draw[->] (-1.5,-2.2) -- (-2,-2.7);
\draw[->] (-2.5,-2.2) -- (-3,-2.7);

\fill (-2.6,-2.3) circle (1pt);
\draw[] (-2.6,-2.3) circle (2pt);
\fill (-2.75,-2.45) circle (1pt);
\draw[] (-2.75,-2.45) circle (2pt);
\fill (-2.9,-2.6) circle (1pt);
\draw[] (-2.9,-2.6) circle (2pt);

\fill (-1.6,-2.3) circle (1pt);
\draw[] (-1.6,-2.3) circle (2pt);
\fill (-1.75,-2.45) circle (1pt);
\draw[] (-1.75,-2.45) circle (2pt);
\fill (-1.9,-2.6) circle (1pt);
\draw[] (-1.9,-2.6) circle (2pt);

\fill (-0.6,-2.3) circle (1pt);
\draw[] (-0.6,-2.3) circle (2pt);
\fill (-0.75,-2.45) circle (1pt);
\draw[] (-0.75,-2.45) circle (2pt);
\fill (-0.9,-2.6) circle (1pt);
\draw[] (-0.9,-2.6) circle (2pt);

\fill (0.4,-2.3) circle (1pt);
\draw[] (0.4,-2.3) circle (2pt);
\fill (0.25,-2.45) circle (1pt);
\draw[] (0.25,-2.45) circle (2pt);
\fill (0.1,-2.6) circle (1pt);
\draw[] (0.1,-2.6) circle (2pt);

\fill (1.4,-2.3) circle (1pt);
\draw[] (1.4,-2.3) circle (2pt);
\fill (1.25,-2.45) circle (1pt);
\draw[] (1.25,-2.45) circle (2pt);
\fill (1.1,-2.6) circle (1pt);
\draw[] (1.1,-2.6) circle (2pt);


\draw[gray] (-0.55,-1.35)--(0,-1.75);

\draw[gray] (-0.7,-1.5)--(0.40,-2.3);
\draw[gray] (-0.85,-1.65)--(0.25,-2.45);
\draw[gray] (-0.3,-2.3)--(0.07,-2.55);

\fill[cyan] (-0.15,-1.9) circle (1pt);
\fill[cyan] (-0.3,-2.05) circle (1pt);
\fill[cyan] (0,-1.75) circle (1pt);
\draw[->,cyan] (0,-1.75)--(-0.39,-2.15);

\draw[gray] (-1.7,-1.5)--(-0.60,-2.3);
\draw[gray] (-1.85,-1.65)--(-0.75,-2.45);
\draw[gray] (-1.3,-2.3)--(-0.93,-2.55);

\fill[cyan] (-1.15,-1.9) circle (1pt);
\fill[cyan] (-1.3,-2.05) circle (1pt);

\draw[gray] (-1.55,-1.35)--(-1,-1.75);
\fill[cyan] (-1,-1.75) circle (1pt);
\draw[->,cyan] (-1,-1.75)--(-1.39,-2.15);

\draw[gray] (-2.7,-1.5)--(-1.60,-2.3);
\draw[gray] (-2.85,-1.65)--(-1.75,-2.45);
\draw[gray] (-2.3,-2.3)--(-1.93,-2.55);

\fill[cyan] (-2.15,-1.9) circle (1pt);
\fill[cyan] (-2.3,-2.05) circle (1pt);
\fill[cyan] (-2,-1.75) circle (1pt);

\draw[gray] (-2.55,-1.35)--(-2,-1.75);
\draw[->,cyan] (-2,-1.75)--(-2.39,-2.15);

\draw[gray] (-2.55,-1.35)--(-3.55,-0.6);
\draw[gray] (-2.7,-1.5)--(-3.58,-0.84);
\draw[gray] (-2.82,-1.65) -- (-3.57,-1.1);
\draw[gray] (-3,-0.75)--(-3.5,-0.4);

\fill[cyan] (-3,-0.75) circle (1pt);
\fill[cyan] (-3.15,-0.9) circle (1pt);
\fill[cyan] (-3.3,-1.05) circle (1pt);
\fill[cyan] (-3.45,-1.2) circle (1pt);
\draw[->,cyan] (-3,-0.75)--(-3.59,-1.35);

\fill[cyan] (-3,-1.75) circle (1pt);
\fill[cyan] (-3.15,-1.9) circle (1pt);
\fill[cyan] (-3.3,-2.05) circle (1pt);
\fill[cyan] (-3.45,-2.2) circle (1pt);
\draw[->,cyan] (-3,-1.75)--(-3.59,-2.35);

\draw[gray] (-2.65,-2.25)--(-3.6,-1.58);
\draw[gray] (-2.82,-2.41)--(-3.6,-1.83);
\draw[gray] (-2.96,-2.57)--(-3.6,-2.08);
\draw[gray] (-3,-1.75)--(-3.6,-1.3);

\draw[gray] (0.45,-1.35)--(1,-1.75);

\draw[gray] (0.3,-1.5)--(1.40,-2.3);
\draw[gray] (0.15,-1.65)--(1.25,-2.45);
\draw[gray] (0.7,-2.3)--(1.07,-2.55);

\fill[cyan] (0.85,-1.9) circle (1pt);
\fill[cyan] (0.7,-2.05) circle (1pt);
\fill[cyan] (1,-1.75) circle (1pt);
\draw[->,cyan] (1,-1.75)--(0.61,-2.15);

\draw[gray] (-2.7,-0.5)--(-1.60,-1.3);
\draw[gray] (-2.85,-0.65)--(-1.75,-1.45);
\draw[gray] (-2.3,-1.3)--(-1.93,-1.55);

\fill[cyan] (-2.15,-0.9) circle (1pt);
\fill[cyan] (-2.3,-1.05) circle (1pt);

\draw[gray] (-2.55,-0.35)--(-2,-0.75);
\fill[cyan] (-2,-0.75) circle (1pt);
\draw[->,cyan] (-2,-0.75)--(-2.39,-1.15);

\draw[gray] (-1.7,-0.5)--(-0.60,-1.3);
\draw[gray] (-1.85,-0.65)--(-0.75,-1.45);
\draw[gray] (-1.3,-1.3)--(-0.93,-1.55);

\fill[cyan] (-1.15,-0.9) circle (1pt);
\fill[cyan] (-1.3,-1.05) circle (1pt);

\draw[gray] (-1.55,-0.35)--(-1,-0.75);
\fill[cyan] (-1,-0.75) circle (1pt);
\draw[->,cyan] (-1,-0.75)--(-1.39,-1.15);

\draw[] (-1.5,0.28) node[right] {\small $Z=\Sigma^1(\frac{1}{a_\sigma u_{2\sigma}})$};
\draw[->] (-1.5,0.28) -- (-1.95,0.285);
\fill (-2,0.3) circle (1pt);
\fill (-1.887,0.367) circle (1pt);
\fill (-1.774,0.434) circle (1pt);

\fill (-2.113,0.223) circle (1pt);
\fill (-2.226,0.156) circle (1pt);

\draw[] (-2.113,0.223) -- (-1.6,-0.3);
\draw[] (-2.226,0.156) -- (-1.76,-0.42);
\draw[] (-2.1,-0.25) -- (-1.92,-0.57);

\draw[->] (-2.3,0.1) -- (-1.7,0.5);
\draw[->] (-1.7,0.5) -- (-2.3,0.1);

\fill (-2.1,0.4) circle (1pt);
\fill (-1.987,0.467) circle (1pt);
\fill (-1.874,0.534) circle (1pt);

\fill (-2.213,0.323) circle (1pt);
\fill (-2.326,0.256) circle (1pt);

\draw[->] (-2.4,0.2) -- (-1.8,0.6);
\draw[->] (-1.8,0.6) -- (-2.4,0.2);

\draw[->] (-2.226,0.156) -- (-2.426,0.356);
\draw[->] (-2.113,0.223) -- (-2.313,0.423);
\draw[->] (-2.0,0.3) -- (-2.2,0.5);
\draw[->] (-1.887,0.367) -- (-2.087,0.567);
\draw[->] (-1.774,0.434) -- (-1.974,0.634);

\fill (-3,0.3) circle (1pt);
\draw[->] (-3.3,0.1) -- (-2.7,0.5);
\draw[->] (-2.7,0.5) -- (-3.3,0.1);

\fill (-2.887,0.367) circle (1pt);
\fill (-2.774,0.434) circle (1pt);

\fill (-3.113,0.223) circle (1pt);
\fill (-3.226,0.156) circle (1pt);

\draw[] (-3.113,0.223) -- (-2.6,-0.3);
\draw[] (-3.226,0.156) -- (-2.76,-0.42);
\draw[] (-3.1,-0.25) -- (-2.92,-0.57);

\draw[->] (-2.3,0.1) -- (-1.7,0.5);
\draw[->] (-1.7,0.5) -- (-2.3,0.1);

\fill (-3.1,0.4) circle (1pt);
\fill (-2.987,0.467) circle (1pt);
\fill (-2.874,0.534) circle (1pt);

\fill (-3.213,0.323) circle (1pt);
\fill (-3.326,0.256) circle (1pt);

\draw[->] (-3.4,0.2) -- (-2.8,0.6);
\draw[->] (-2.8,0.6) -- (-3.4,0.2);

\draw[->] (-3.226,0.156) -- (-3.426,0.356);
\draw[->] (-3.113,0.223) -- (-3.313,0.423);
\draw[->] (-3.0,0.3) -- (-3.2,0.5);
\draw[->] (-2.887,0.367) -- (-3.087,0.567);
\draw[->] (-2.774,0.434) -- (-2.974,0.634);

\draw[gray] (-2.6,-2.3)--(-2.1,-2.62);
\draw[gray] (-2.7,-2.48)--(-2.3,-2.75);
\draw[gray] (-2.85,-2.63)--(-2.6,-2.8);
\draw[gray] (-1.6,-2.3)--(-1.1,-2.62);
\draw[gray] (-1.7,-2.48)--(-1.3,-2.75);
\draw[gray] (-1.85,-2.63)--(-1.6,-2.8);
\draw[gray] (-0.6,-2.3)--(-0.1,-2.62);
\draw[gray] (-0.7,-2.48)--(-0.3,-2.75);
\draw[gray] (-0.85,-2.63)--(-0.6,-2.8);
\draw[gray] (0.4,-2.3)--(0.9,-2.62);
\draw[gray] (0.3,-2.48)--(0.7,-2.75);
\draw[gray] (0.15,-2.63)--(0.4,-2.8);
\draw[gray] (1.4,-2.3)--(1.9,-2.62);
\draw[gray] (1.3,-2.48)--(1.7,-2.75);
\draw[gray] (1.15,-2.63)--(1.4,-2.8);
\draw[gray] (1.4,-1.3)--(1.9,-1.62);
\draw[gray] (1.3,-1.48)--(1.7,-1.75);
\draw[gray] (1.15,-1.63)--(1.4,-1.8);
\draw[] (-1.5,-0.25) node[right,brown] {\tiny $3$};
\draw[] (-2.5,-0.25) node[right,brown] {\tiny $5$};
\draw[] (-0.5,-1.25) node[right,brown] {\tiny $3$};
\draw[] (0.5,-1.25) node[right,brown] {\tiny $1$};
\draw[] (1.5,-1.25) node[right,brown] {\tiny $-1$};

\draw[] (-1.97,0.27)--(-1.52,-0.15);
\draw[] (-2.97,0.27)--(-2.52,-0.15);

\end{tikzpicture}
\caption{The module $\M_{odd}$}
\label{fig:C4-two}
\end{figure}
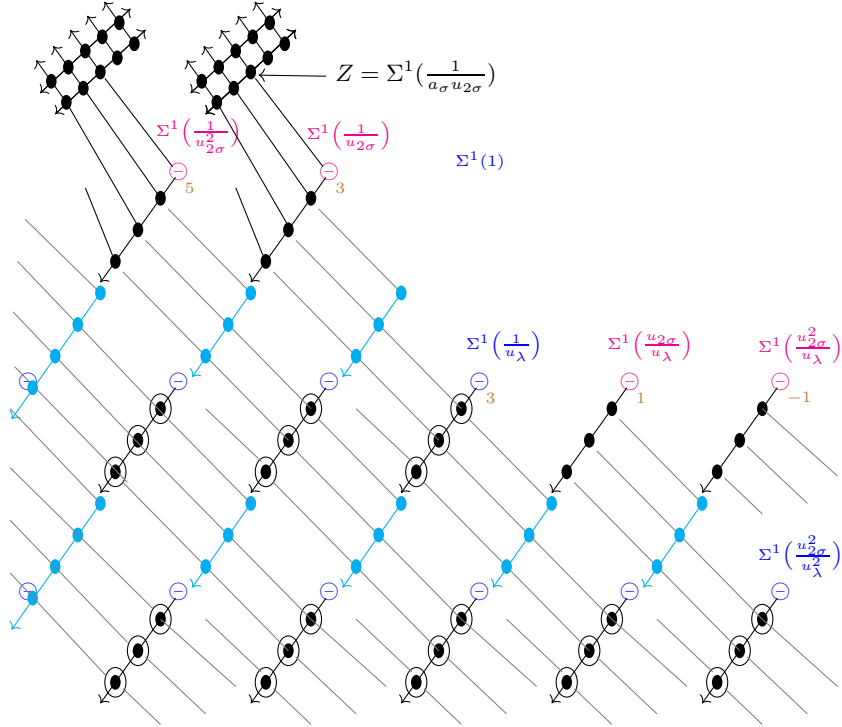

As a $\Z[u_{2\sigma},u_\lambda,a_\lambda,a_\sigma]$-module $\M$ is the direct sum of $\M_{ev}$ and $\M_{odd}$.  So the vanishing condition---which is a property of multiplication by $a_\lambda^ia_\sigma^j$---can be verified by thinking about the two pieces separately.  Observe the following properties of these pieces:
\begin{enumerate}[(1)]
\item For degrees in $RO(C_4)$ where both the $\lambda$-index and $\sigma$-index are negative, $\M_{ev}$ is zero EXCEPT for the classes $\frac{??}{u_{2\sigma}^ru_\lambda^s}$, which are all annihilated by $a_\sigma$ and occur in degrees having rank zero.   
\item $\M_{odd}$ is nonzero only in degrees where at least one of the $\lambda$-index and $\sigma$-index is negative.  
\end{enumerate}

Let $V=i\lambda+j\sigma$, where $i,j\geq 0$.  We assume $2i+j\neq 2$ to avoid the $\dim V=2$ anomaly. Recall that the vanishing requirement is that for each $\ell\in \Z$ the two maps
\[ \xymatrixcolsep{3.5pc}\xymatrix{? \ar[r]^-{a_\lambda^ia_\sigma^j} & H^{(\ell+1)(i\lambda+j\sigma)-\ell}(\pt) \ar[r]^-{a_\lambda^ia_\sigma^j} & ??
}
\]
are surjective and injective, respectively.   We have not written explicit groups in the spots with question marks because the degrees of those group can easily be determined from the other information and are largely irrelevant.  Note that when $\ell$ is odd the middle group is in $\M_{odd}$, and when $\ell$ is even it is in $\M_{ev}$.  Properties (1) and (2) above then imply that
\[ \text{the middle group is zero in the cases} \begin{cases}
\text{$\ell$ odd and positive},\\
\text{$\ell$ even and negative.}
\end{cases}
\]
So in these cases the vanishing requirement is trivially satisfied.

We analyze the remaining cases separately.

\smallskip

\noindent
\underline{Case 1: } $\ell$ is even and $\ell\geq 0$.

\smallskip
Write $\ell=2k$, with $k\geq 0$.  Our middle group is then in degree $(2k+1)(i\lambda+j\sigma)-2k$.  We will locate this degree on the chart for $\M_{ev}$ and then use the chart to analyze the injectivity/surjectivity of the maps.  The analysis is slightly different in the $j=0$ and $j>0$ cases.

Suppose $j=0$, so that $i>0$.  The degree in question is then
\[ (2k+1)i\lambda-2k = k(\lambda-2)+[ (2k+1)i-k]\lambda = k(\lambda-2)+[i+k(2i-1)]\lambda.
\]
On the chart, we get into this degree by starting with $u_\lambda^k$ and then multiplying by $a_\lambda^{i+k(2i-1)}$, noting that the exponent of $a_\lambda$ is at least $i$.  This puts us on the ray of $\Z/4$'s emanating from $u_\lambda^k$, and we need to consider the multiplication-by-$a_\lambda^i$ maps going in and out of this degree.  The map going out is clearly an isomorphism (hence injective), and the map coming in is a surjection because we had $i+k(2i-1)\geq i$ (that is, we have multiplied by at least $i$ of the $a_\lambda$ classes in the course of moving to our spot on the chart).  

Now consider the case $j>0$.  Here the degree is
\[ (2k+1)(i\lambda+j\sigma)-2k=k(2\sigma-2)+(2k+1)i\lambda + [j+2k(j-1)]\sigma,
\]
and we get into this degree by starting
with the class $u_{2\sigma}^k$ and then multiplying by $a_\lambda^{(2k+1)i}$ and $a_\sigma^{j+2k(j-1)}$.  Note that the first exponent is at least $i$ and the second is at least $j$ (hence positive).    This puts us at one of the $\Z/2$'s in the diagram, and we need to consider the multiplication-by-$a_\lambda^ia_\sigma^j$ maps going into and out of it.  The map going out is clearly an isomorphism, hence injective.  The map coming in is surjective because our exponents satisfied $(2k+1)i\geq i$ and $j+2k(j-1)\geq j$.

\smallskip

\noindent
\underline{Case 2: } $\ell$ is odd and $\ell \leq -1$.

Note that $\ell=-1$ is trivial since the middle group is $H^1(\pt)=0$, so we may assume $\ell=-2k-1$ where $k\geq 1$.  Then the middle group lies in degree $(2k+1)-2k(i\lambda+j\sigma)$.  We will again locate this group on the chart of $\M_{odd}$ and then analyze the incoming and outgoing multiplication maps.  

Let use first consider the $j=0$ situation, so that $i>1$ and
we are looking at degree $(2k+1)-2ki\lambda$.  To find this degree on the chart, first find the class $\Sigma^1(\frac{1}{u_\lambda^k})$, which has degree $(2k+1)-k\lambda$, and then divide by $a_\lambda^{2ki-k}$ (note that $2ki-k\geq 1$ since $i\geq 1$ and $k\geq 1$).  This is one of the $\Z/4$'s in the diagram, and we want to look at multiplication by $a_\lambda^i$ going in and out of this group.  Going in is obviously an isomorphism, hence surjective.  Going out will also be an isomorphism as long as we don't go too far so that we are in the range where the groups become zero.  But we have divided by $2ki-k$ of the classes $a_\lambda$, and now we are multipliying by $i$ of them, so we will be fine as long as $2ki-k>i$.  This is readily checked to be true under our hypotheses that $k\geq 1$, $i>1$.  

Next assume $j>0$.  Locate the class $\Sigma^1(\frac{1}{u_{2\sigma}^k})$ on the chart, which has degree $(2k+1)-2k\sigma$.  Divide by $a_\lambda^{2ki}$, and then further divide by $a_\sigma^{2kj-2k}$ (note that $2kj-2k\geq 0$).  If $j=1$ this last step does nothing, whereas if $j>1$ it puts us up into the regions at the top of the chart (the ones containing the classes $\frac{1}{u_{2\sigma}^r}Z$).  Regardless, we find a $\Z/2$ in this degree and need to analyze the maps $a_\lambda^ia_\sigma^j$ going in and out of this spot.  It is apparent at once that the incoming map is surjective---i.e., we can take our class and divide by $a_\lambda$ and $a_\sigma$ any number of times.  For the outgoing map, we need to make sure our class is not annihilated by $a_\lambda^i a_\sigma^j$.  But note that multiplying by $a_\lambda^{2ki-1}$ gives something nonzero by construction, and one readily checks that either $i=0$ or $2ki-1\geq i$; so multiplying by $a_\lambda^i$ does not annihilate our class.  Likewise, multiplication by $a_\sigma^{2kj-2k+1}$ is definitely nonzero (we divided by $a_\sigma^{2kj-2k}$, but the classes $\frac{1}{a_\lambda^r}\Sigma^1(\frac{1}{u_{2\sigma}^s})$ all admit at least one nonzero $a_\sigma$ multiplication).  One again checks that $2kj-2k+1\geq j$, so we are okay.  

\smallskip

\subsection{The case \mdfn{$G=\Sigma_3$}}
For this group the calculations are taken from \cite{Y1}.
For our ordered basis of $RO(\Sigma_3)$ we take $1$, $\lambda$, $\sigma$, where $\sigma$ is the sign representation and $\lambda$ is the action on $\R^2$ via the symmetries of an equilateral triangle.  The Euler classes $a_\lambda$ and $a_\sigma$ have orders $3$ and $2$, respectively. Note that the product $a_\lambda a_\sigma$ must then be zero, which simplifies some things as we will have far fewer classes to manage.

The representations $2\sigma$ and $\lambda+\sigma$ are orientable, so we have orientation classes $u_{2\sigma}$ and $u_{\lambda+\sigma}$.  It turns out that the class $\frac{u_{\lambda+\sigma}}{u_{2\sigma}}$ also exists; we denote this as $u_{\lambda-\sigma}$.  Note that we do not need to consider the class $u_{2\lambda}$, even though $2\lambda$ is orientable, because it arises as the product $u_{2\lambda}=u_{\lambda-\sigma}^2u_{2\sigma}$.  

As a $\Z[u_{2\sigma},u_\ls,a_\sigma,a_\lambda]$-module it is again true that $\M$ splits into two pieces $\M_0$ and $\M_1$, but those two pieces are no longer characterized by the parity of the fixed point rank.  We will only describe these pieces empirically.  The chart for $\M_0$ is shown in Figure~\ref{fig:Sigma_3-one}.
To read this chart just note that:
\begin{itemize}
\item Triangles represent $\Z/3$'s and dots represent $\Z/2$'s.  The red classes generate copies of $\Z$.
\item Multiplication by $a_\lambda$ moves up and to the right, whereas multiplication by $a_\sigma$ moves down and to the right; both are drawn with blue lines. As in previous charts, this is a projection of a multi-dimensional figure onto a plane and so blue rays that look like they intersect usually don't.  
\item The fixed-point index is constant on connected components of the chart, and is indicated in gray square brackets.  Moving down in the diagram increases this index by $1$, and moving left increases it by $2$.
\item A ``knight's move'' of right-one/down-two preserves the fixed-point index, and there are instances where two seemingly-different components run into each other.  For example, $a_\sigma^4 u_{\ls}^2$ and $a_\lambda^2 u_{2\sigma}$ are both in degree $-2+2\lambda+2\sigma$.  The first class generates a $\Z/2$ and the second a $\Z/3$, so these combine to give a $\Z/6$ in that degree.  This phenomenon only happens in the upper right quadrant of the chart.
\item The dashed lines separate the chart into four quadrants.  Observe that everything in the lower-right has negative $\lambda$-index and positive $\sigma$-index, whereas everything in the upper left has positive $\lambda$-index and negative $\sigma$-index.  Thus, nothing in these two regions lies in the regular portion of $\M$. The other two quadrants have a mixed collection of indices, from this particular perspective.
\end{itemize}

\begin{figure}[ht]
\begin{tikzpicture}[scale=2,yscale=1]
\draw[dashed] (-0.5,-2.5)--(-0.5,1.7);
\draw[dashed] (-2.5,-0.6)--(2.5,-0.6);
\draw[] (-0.05,0) node[left,red] {\small $1$};
\draw[] (1.05,0.05) node[left,red] {\small $u_{2\sigma}$};
\draw[] (2.05,0.05) node[left,red] {\small $u_{2\sigma}^2$};
\draw[] (-1,0) node[left,red] {\small $\frac{2}{u_{2\sigma}}$};
\draw[] (-2,0) node[left,red] {\small $\frac{2}{u_{2\sigma}^2}$};

\draw[] (-0.05,-0.15) node[left,gray] {\tiny $[0]$};
\draw[] (1.05,-0.15) node[left,gray] {\tiny $[-2]$};
\draw[] (-0.05,0.85) node[left,gray] {\tiny $[-1]$};
\draw[] (-1.05,0.85) node[left,gray] {\tiny $[1]$};
\draw[] (-1.05,-0.2) node[left,gray] {\tiny $[2]$};
\draw[] (-0.05,-1.2) node[left,gray] {\tiny $[1]$};

\draw[] (0,1.05) node[left,red] {\small $u_{\ls}$};
\draw[] (1,1.05) node[left,red] {\small $u_{2\sigma}u_{\ls}$};
\draw[] (2.2,1.08) node[left,red] {\small $u_{2\sigma}^2u_{\ls}$};
\draw[] (-1,1.05) node[left,red] {\small $\frac{2u_{\ls}}{u_{2\sigma}}$};
\draw[] (-2,1.05) node[left,red] {\small $\frac{2u_{\ls}}{u_{2\sigma}^2}$};

\draw[] (0,-1) node[left,red] {\small $\frac{3}{u_{\ls}}$};
\draw[] (1,-1) node[left,red] {\small $\frac{3u_{2\sigma}}{u_{\ls}}$};
\draw[] (2,-1) node[left,red] {\small $\frac{3u_{2\sigma}^2}{u_{\ls}}$};
\draw[] (-1,-1) node[left,red] {\small $\frac{6}{u_{2\sigma}u_{\ls}}$};
\draw[] (-2,-1) node[left,red] {\small $\frac{6}{u_{2\sigma}^2u_{\ls}}$};

\draw[] (0,-2) node[left,red] {\small $\frac{3}{u_\ls^2}$};
\draw[] (1,-2) node[left,red] {\small $\frac{3u_{2\sigma}}{u_\ls^2}$};
\draw[] (2,-2) node[left,red] {\small $\frac{3u_{2\sigma}^2}{u_\ls^2}$};
\draw[] (-1,-2) node[left,red] {\small $\frac{6}{u_{2\sigma}u_\ls^2}$};
\draw[] (-2,-2) node[left,red] {\small $\frac{6}{u_{2\sigma}^2u_\ls^2}$};


\draw[]  (-0.978,0.23) node {\small $\triangle$};
\draw[]  (-0.87,0.36) node {\small $\triangle$};
\draw[->,blue,thick] (-1.1,0.1) --(-0.75,0.5);

\draw[]  (-1.978,0.23) node {\small $\triangle$};
\draw[]  (-1.87,0.36) node {\small $\triangle$};
\draw[->,blue,thick] (-2.1,0.1) --(-1.75,0.5);

\draw[]  (-0.978,1.23) node {\small $\triangle$};
\draw[]  (-0.87,1.36) node {\small $\triangle$};
\draw[->,blue,thick] (-1.1,1.1) --(-0.75,1.5);

\draw[]  (-1.978,1.23) node {\small $\triangle$};
\draw[]  (-1.87,1.36) node {\small $\triangle$};
\draw[->,blue,thick] (-2.1,1.1) --(-1.75,1.5);

\draw[->,blue,thick] (-0.09,0)--(0.5,-0.38);
\fill (0.15,-0.15) circle (1pt);
\fill (0.37,-0.3) circle (1pt);
\draw[]  (0.022,0.23) node {\small $\triangle$};
\draw[]  (0.13,0.36) node {\small $\triangle$};
\draw[->,blue,thick] (-0.1,0.1) --(0.25,0.5);

\draw[->,blue,thick] (0.91,0)--(1.5,-0.38);
\fill (1.15,-0.15) circle (1pt);
\fill (1.37,-0.3) circle (1pt);
\draw[]  (1.022,0.23) node {\small $\triangle$};
\draw[]  (1.13,0.36) node {\small $\triangle$};
\draw[->,blue,thick] (0.9,0.1) --(1.25,0.5);

\draw[->,blue,thick] (1.91,0)--(2.5,-0.38);
\fill (2.15,-0.15) circle (1pt);
\fill (2.37,-0.3) circle (1pt);
\draw[]  (2.022,0.23) node {\small $\triangle$};
\draw[]  (2.13,0.36) node {\small $\triangle$};
\draw[->,blue,thick] (1.9,0.1) --(2.25,0.5);


\draw[->,blue,thick] (-0.09,1)--(0.5,0.62);
\fill (0.15,0.85) circle (1pt);
\fill (0.37,0.7) circle (1pt);
\draw[]  (0.022,1.23) node {\small $\triangle$};
\draw[]  (0.13,1.36) node {\small $\triangle$};
\draw[->,blue,thick] (-0.1,1.1) --(0.25,1.5);

\draw[->,blue,thick] (0.91,1)--(1.5,0.62);
\fill (1.15,0.85) circle (1pt);
\fill (1.37,0.7) circle (1pt);
\draw[]  (1.022,1.23) node {\small $\triangle$};
\draw[]  (1.13,1.36) node {\small $\triangle$};
\draw[->,blue,thick] (0.9,1.1) --(1.25,1.5);

\draw[->,blue,thick] (1.91,1)--(2.5,0.62);
\fill (2.15,0.85) circle (1pt);
\fill (2.37,0.7) circle (1pt);
\draw[]  (2.022,1.23) node {\small $\triangle$};
\draw[]  (2.13,1.36) node {\small $\triangle$};
\draw[->,blue,thick] (1.9,1.1) --(2.25,1.5);


\draw[->,blue,thick] (-0.09,-1)--(0.5,-1.38);
\fill (0.15,-1.15) circle (1pt);
\fill (0.37,-1.3) circle (1pt);
\draw[->,blue,thick] (0.91,-1)--(1.5,-1.38);
\fill (1.15,-1.15) circle (1pt);
\fill (1.37,-1.3) circle (1pt);
\draw[->,blue,thick] (1.91,-1)--(2.5,-1.38);
\fill (2.15,-1.15) circle (1pt);
\fill (2.37,-1.3) circle (1pt);

\draw[->,blue,thick] (-0.09,-2)--(0.5,-2.38);
\fill (0.15,-2.15) circle (1pt);
\fill (0.37,-2.3) circle (1pt);
\draw[->,blue,thick] (0.91,-2)--(1.5,-2.38);
\fill (1.15,-2.15) circle (1pt);
\fill (1.37,-2.3) circle (1pt);
\draw[->,blue,thick] (1.91,-2)--(2.5,-2.38);
\fill (2.15,-2.15) circle (1pt);
\fill (2.37,-2.3) circle (1pt);
\end{tikzpicture}
\caption{The module $\M_{0}$}
\label{fig:Sigma_3-one}
\end{figure}

The chart for $\M_1$ is shown in Figure~\ref{fig:Sigma_3-two}.  The main points to keep in mind are
\begin{itemize}
\item As in the previous chart, triangles represent $\Z/3$'s and dots represent $\Z/2$'s.  The $a_\sigma$ multiplications are down-and-to-the-right, and the $a_\lambda$ multiplications are up-and-to-the-right.  
\item The red classes are just placeholders, and don't actually exist in $\M$ (or said differently, they are all zero).  
\item Every nonzero element in this chart is infinitely-divisible by either $a_\lambda$ or $a_\sigma$.
\item The fixed-point index (shown in gray brackets) behaves similarly to Figure~\ref{fig:Sigma_3-one}, and there is again the issue---this time in the lower left quadrant of the diagram---of the $a_\sigma$- and $a_\lambda$-rays sometimes running into each other to create a $\Z/6$.
\item Observe that every class in $\M_1$ has a negative $\lambda$-index or a negative $\sigma$-index.  
\end{itemize}

\begin{figure}[ht]
\begin{tikzpicture}[scale=2]
\draw[] (-0.05,0) node[left,red] {\tiny $\Sigma^1(1)$};
\draw[] (-1,0) node[left,red] {\tiny $\Sigma^1(\frac{1}{u_{2\sigma}})$};
\draw[] (-2,0) node[left,red] {\tiny $\Sigma^1(\frac{1}{u_{2\sigma}^2})$};

\draw[] (-0.08,-0.15) node[left,gray] {\tiny $[1]$};
\draw[] (-0.08,-0.82) node[left,gray] {\tiny $[2]$};
\draw[] (-1.08,-0.17) node[left,gray] {\tiny $[3]$};
\draw[] (-1.08,0.85) node[left,gray] {\tiny $[2]$};

\draw[] (-1,1.05) node[left,red] {\tiny $\Sigma^1(\frac{u_{\ls}}{u_{2\sigma}})$};
\draw[] (-2,1.05) node[left,red] {\tiny $\Sigma^1(\frac{u_{\ls}}{u_{2\sigma}^2})$};

\draw[] (0,-1) node[left,red] {\tiny  $\Sigma^1(\frac{1}{u_{\ls}})$};
\draw[] (1,-1) node[left,red] {\tiny $\Sigma^1( \frac{u_{2\sigma}}{u_{\ls}})$};
\draw[] (2,-1) node[left,red] {\tiny $\Sigma^1(\frac{u_{2\sigma}^2}{u_{\ls}})$};
\draw[] (-1,-1) node[left,red] {\tiny $\Sigma^1(\frac{1}{u_{2\sigma}u_{\ls}})$};
\draw[] (-2,-1) node[left,red] {\tiny $\Sigma^1(\frac{1}{u_{2\sigma}^2u_{\ls}})$};

\draw[] (0,-2) node[left,red] {\tiny $\Sigma^1(\frac{1}{u_\ls^2})$};
\draw[] (1,-2) node[left,red] {\tiny $\Sigma^1(\frac{u_{2\sigma}}{u_\ls^2})$};
\draw[] (2,-2) node[left,red] {\tiny $\Sigma^1(\frac{u_{2\sigma}^2}{u_\ls^2})$};
\draw[] (-1,-2) node[left,red] {\tiny $\Sigma^1(\frac{1}{u_{2\sigma}u_\ls^2})$};
\draw[] (-2,-2) node[left,red] {\tiny $\Sigma^1(\frac{1}{u_{2\sigma}^2u_\ls^2})$};


\draw[->,blue,thick] (-2.39,-1.1)--(-2.68,-1.48);
\draw[] (-2.46,-1.18) node[] {\small $\triangle$};
\draw[] (-2.55,-1.32) node[] {\small $\triangle$};

\draw[->,blue,thick] (-1.39,-1.1)--(-1.68,-1.48);
\draw[] (-1.46,-1.18) node[] {\small $\triangle$};
\draw[] (-1.55,-1.32) node[] {\small $\triangle$};

\draw[->,blue,thick] (-0.39,-1.1)--(-0.68,-1.48);
\draw[] (-0.46,-1.18) node[] {\small $\triangle$};
\draw[] (-0.55,-1.32) node[] {\small $\triangle$};

\draw[->,blue,thick] (0.61,-1.1)--(0.32,-1.48);
\draw[] (0.54,-1.18) node[] {\small $\triangle$};
\draw[] (0.45,-1.32) node[] {\small $\triangle$};

\draw[->,blue,thick] (1.61,-1.1)--(1.32,-1.48);
\draw[] (1.54,-1.18) node[] {\small $\triangle$};
\draw[] (1.45,-1.32) node[] {\small $\triangle$};


\draw[->,blue,thick] (-2.39,-2.1)--(-2.68,-2.48);
\draw[] (-2.46,-2.18) node[] {\small $\triangle$};
\draw[] (-2.55,-2.32) node[] {\small $\triangle$};

\draw[->,blue,thick] (-1.39,-2.1)--(-1.68,-2.48);
\draw[] (-1.46,-2.18) node[] {\small $\triangle$};
\draw[] (-1.55,-2.32) node[] {\small $\triangle$};

\draw[->,blue,thick] (-0.39,-2.1)--(-0.68,-2.48);
\draw[] (-0.46,-2.18) node[] {\small $\triangle$};
\draw[] (-0.55,-2.32) node[] {\small $\triangle$};

\draw[->,blue,thick] (0.61,-2.1)--(0.32,-2.48);
\draw[] (0.54,-2.18) node[] {\small $\triangle$};
\draw[] (0.45,-2.32) node[] {\small $\triangle$};

\draw[->,blue,thick] (1.61,-2.1)--(1.32,-2.48);
\draw[] (1.54,-2.18) node[] {\small $\triangle$};
\draw[] (1.45,-2.32) node[] {\small $\triangle$};


\draw[->,blue,thick] (-1.3,-1.9)--(-1.8,-1.65);
\fill (-1.44,-1.83) circle(1pt);
\fill (-1.635,-1.73) circle(1pt);

\draw[->,blue,thick] (-2.3,-1.9)--(-2.8,-1.65);
\fill (-2.44,-1.83) circle(1pt);
\fill (-2.635,-1.73) circle(1pt);

\draw[->,blue,thick] (-1.3,-0.9)--(-1.8,-0.65);
\fill (-1.44,-0.83) circle(1pt);
\fill (-1.635,-0.73) circle(1pt);

\draw[->,blue,thick] (-2.3,-0.9)--(-2.8,-0.65);
\fill (-2.44,-0.83) circle(1pt);
\fill (-2.635,-0.73) circle(1pt);

\draw[->,blue,thick] (-1.3,0.1)--(-1.8,0.35);
\fill (-1.44,0.17) circle(1pt);
\fill (-1.635,0.27) circle(1pt);

\draw[->,blue,thick] (-2.3,0.1)--(-2.8,0.35);
\fill (-2.44,0.17) circle(1pt);
\fill (-2.635,0.27) circle(1pt);

\draw[->,blue,thick] (-1.3,1.2)--(-1.8,1.45);
\fill (-1.44,1.27) circle(1pt);
\fill (-1.635,1.37) circle(1pt);

\draw[->,blue,thick] (-2.3,1.2)--(-2.8,1.45);
\fill (-2.44,1.27) circle(1pt);
\fill (-2.635,1.37) circle(1pt);

\draw[dashed] (-0.8,-2.5)--(-0.8,1.7);
\draw[dashed] (-2.9,-0.4)--(2.3,-0.4);
\end{tikzpicture}
\caption{The module $\M_{1}$}
\label{fig:Sigma_3-two}
\end{figure}

Note that the nonzero elements in the first quadrant of the $\M_0$ chart are sums and multiples of elements of the form 
\begin{enumerate}[(i)]
\item $u_{\lambda-\sigma}^Pu_{2\sigma}^Qa_\sigma^R$, lying in degree
$P\lambda+(2Q+R-P)\sigma-(P+2Q)$, and
\item $u_{\lambda-\sigma}^Pu_{2\sigma}^Qa_\lambda^R$, lying in degree
$(P+R)\lambda +(2Q-P)\sigma-(P+2Q)$.
\end{enumerate}
Since $P,Q,R\geq 0$ we observe that $-(\text{fixed point index})\geq \text{$\lambda$-index}$ in case (i), and $-(\text{fixed point index})\geq \text{$\sigma$-index}$ in case (ii).

Let $V=i\lambda+j\sigma$ with $i,j\geq 0$ and assume $2i+j\neq 2$ to avoid the $\dim V=2$ anomaly.  The vanishing requirement is that for each $\ell\in \Z$ the two maps 
\[ \xymatrixcolsep{3.5pc}\xymatrix{? \ar[r]^-{a_\lambda^ia_\sigma^j} & H^{(\ell+1)(i\lambda+j\sigma)-\ell}(\pt) \ar[r]^-{a_\lambda^ia_\sigma^j} & ??
}
\]
are surjective and injective, respectively.  
When $\ell\geq 0$ the middle group is in the first quadrant of the $\M_0$ chart, and comparing to the previous paragraph we will satisfy the index inequalities only when $i=0$ (putting us in case (i) with $P=0$ and $\ell=2Q$), or when $j=0$ (putting us in case (ii) with $2Q-P=0$ and $\ell=P+2Q=4Q$).  In the first of these cases the group is $\Z/2$ generated by $u_{2\sigma}^{\frac{\ell}{2}} a_\sigma^{(\ell+1)j-\ell}$; injectivity  and surjectivity are clear from the chart once one notes that $(\ell+1)j-\ell \geq j\geq 1$.  In the second of the cases,  $\ell=4Q$ and the group is $\Z/3$ generated by $u_{\ls}^{2Q}u_{2\sigma}^Q a_\lambda^{(4Q+1)i-2Q}$.  Here $(4Q+1)i-2Q\geq i\geq 1$ and so it is clear from the chart that the outgoing map is  injective and the incoming map is surjective.

When $\ell=-1$ the middle group is $H^1(\pt)=0$, so the conditions are satisfied.

Finally we deal with the case $\ell\leq -2$.  A little work shows that $\M_0$ is zero in the degree $(\ell+1)(i\lambda+j\sigma)-\ell$: the idea is that since the $\lambda$-index and $\sigma$-index are both non-positive with at least one negative, a nonzero class could only appear in the lower left quadrant of the chart, or maybe on the bottom line of the upper left quadrant.  These possibilities are quickly ruled out except for the single case $V=2\sigma$, $\ell=-2$: this is the only case where the middle group has a nonzero piece in $\M_0$.  But this case is contrary to our hypothesis that $2i+j\neq 2$.

So we only need consider $\M_1$ now.  Degrees where the $\lambda$-index and $\sigma$-index are both non-positive are all in the lower-left quadrant.  The analysis is very similar to the $\ell>0$ case, in that one quickly finds the middle group to be zero except when either $i$ or $j$ is zero, and in those remaining cases one readily sees that the required conditions are always satisfied.    

The conclusion is that the vanishing requirement holds for every representation with $\dim V\neq 2$.  


\bibliographystyle{amsalpha}

\end{document}